\newtheorem{theorem}{Theorem}[section]
\newaliascnt{lemma}{theorem}
\newtheorem{lemma}[lemma]{Lemma}
\newaliascnt{proposition}{theorem}
\newaliascnt{assumption}{theorem}
\newaliascnt{corollary}{theorem}
\newaliascnt{definition}{theorem}
\newaliascnt{example}{theorem}
\newaliascnt{remark}{theorem}
\newtheorem{remark}[remark]{Remark}
\newaliascnt{hypothesis}{theorem}
\newaliascnt{property}{theorem}
\let\originalleft\left
\let\originalright\right
\renewcommand{\left}{\mathopen{}\mathclose\bgroup\originalleft}
\renewcommand{\right}{\aftergroup\egroup\originalright}
\def\w{\textbf{W}^{\vareXilon}_{{\theta}^{\vareXilon}}}
\def\L{\mathrm{L}}
\def\I{\mathrm{I}}
\def\C{\mathrm{C}}
\def\x{\mathbf{x}}
\def\w{\mathbf{w}}
\def\W{\mathrm{W}}
\def\no{\nonumber}
\def\P{\mathbb{P}}
\def\u{\mathbf{u}}
\def\H{\mathrm{H}}
\def\n{\mathbf{n}}
\def\bl{\textcolor{black}}
\newcommand{\Addresses}{{
		\footnote{
			\noindent \textsuperscript{1}Department of Mathematics, Indian Institute of Technology Roorkee (IITR), Roorkee, Uttarakhand 247667, India.\par\nopagebreak
			\noindent  \textit{e-mails:} Arbaz Khan:  \texttt{arbaz@ma.iitr.ac.in,} Sumit Mahajan: \texttt{sumit{\_}m@ma.iitr.ac.in, sumit.mahajan@inria.fr.}
			
			\noindent \textsuperscript{*}Corresponding author.\\
			\textit{Keywords: Burgers-Huxley equation, A posteriori error analysis, Discontinous Galerkin finite element method, Weakly singular kernel, $\L^2$ error estimates.} 
			Mathematics Subject Classification (2020):  65J15, 65N30, 65N50.
}}}
\begin{document}
	\title[A posteriori error estimates for the GBHE with memory]{A posteriori error estimates for the Generalized Burgers-Huxley equation with weakly singular kernels}
	\title[A posteriori error estimates for the GBHE with memory]{A posteriori error estimates for the Generalized Burgers-Huxley equation with weakly singular kernels\Addresses}
	\author[S. Mahajan and A. Khan ]{Sumit Mahajan\textsuperscript{1} and Arbaz Khan\textsuperscript{1*}}
	\begin{abstract}
			This paper explores the residual based a posteriori error estimations for the generalized Burgers-Huxley equation (GBHE) featuring weakly singular kernels. Initially, we present a reliable and efficient error estimator for both the stationary GBHE and the semi-discrete GBHE with memory, utilizing the discontinuous Galerkin finite element method (DGFEM) in spatial dimensions. Additionally, employing backward Euler and Crank–Nicolson discretization in the temporal domain and DGFEM in spatial dimensions, we introduce an estimator for the fully discrete GBHE, taking into account the influence of past history. The paper also establishes optimal $\L^2$ error estimates for both the stationary GBHE and GBHE. Ultimately, we validate the effectiveness of the proposed error estimator through numerical results, demonstrating its efficacy in an adaptive refinement strategy.	
	\end{abstract}
	\maketitle
\section{Introduction}\setcounter{equation}{0} 
\bl{Nonlinear partial differential equations (PDEs) arise in modeling complex physical processes, introducing significant challenges in deriving accurate numerical solutions. Finite element methods (FEM) are widely used to approximate such problems, with the accuracy of these approximations measured through error analysis. While a priori error estimates provide theoretical bounds based on solution regularity, a posteriori error estimates are indispensable for adaptive strategies, offering computable and localized error indicators to enhance solution accuracy. This study focuses on a posteriori error analysis for time-dependent nonlinear integro-differential equations, addressing the challenges introduced by nonlinearity and providing guidance for effective mesh refinement in practical computations.}

The generalized Burgers–Huxley equation (GBHE) stands out as a significant partial differential equation, incorporating advection, diffusion, and a nonlinear reaction term. Originally devised to model nerve pulse propagation in nerve fibers and wall motion in liquid crystals \cite{XYW}, the GBHE has found applications in diverse fields such as traffic flow problems \cite{NAG}, nuclear waste disposal \cite{KC}, fish population mobility \cite{HRT}, and the movement of domain walls in ferroelectric materials under electric fields \cite{YK}. While these applications often adopt the GBHE or its variants in a phenomenological sense, the equation remains a valuable test model for investigating nonlinear advection–reaction–diffusion systems and for evaluating advanced numerical methods.

In the realm of mathematical modeling for physical systems, the conventional approach focuses on representing dynamics at a specific moment. However, this method often overlooks the importance of past influences, potentially introducing inaccuracies. This gap becomes particularly noticeable in precision-dependent domains like nuclear reactor dynamics \cite{KCP}, heat transfer, and thermoelasticity.
Our study delves into the generalized Burgers-Huxley equation (GBHE) with a consideration for memory effects. Through the introduction of a weakly singular kernel, our approach transcends traditional partial differential equations, embracing the realm of partial integro-differential equations. This nuanced perspective enables us to capture the essential impact of historical influences on system behavior, addressing a crucial aspect often omitted in conventional analyses.
\subsection{Model problem}
Consider the domain $\Omega_T = \Omega \times [0,T]$ where $\Omega\subset \mathbb{R}^d \ (d = 2, 3)$ is a convex domain with Lipschitz boundary $\partial\Omega$ and $T$ is the final time. The GBHE with a weakly singular kernel is defined as:
\begin{equation}\label{3.GBHE}
	\begin{aligned}
		\frac{\partial u(\x,t)}{\partial t}&+\alpha u(\x,t)^{\delta}\sum_{i=1}^d\frac{\partial u(\x,t)}{\partial x_i}-\nu\Delta u(\x,t)-\eta\int_{0}^{t} K(t-\tau)\Delta u(\x,\tau) \mathrm{~d}\tau\\&=\beta u(\x,t)(1-u(\x,t)^{\delta})(u(\x,t)^{\delta}-\gamma)+f(\x,t),  \ (\x,t)\in\Omega\times(0,T],\\ u(\x,t)&=0, \ \x \in \partial\Omega ,\ t\in(0,T],\\
		u(\x,0)&=u_0(\x), \ \x\in{\Omega}.
	\end{aligned}
\end{equation}
\bl{Here, $u(\x,t)$ denotes the scalar-valued solution of the GBHE with a weakly singular kernel, where $\x = (x_1,\cdots,x_d)$.} The function $f(\cdot,\cdot)$  represents the external forcing and $K(\cdot)$, regarded as the weakly singular kernel with coefficient  $\eta\geq 0,$ amounts to the relaxation time. The parameters  $\alpha,\nu , \beta\geq 0$ represent the advection, diffusion and  reaction term, respectively, \bl{$\delta$ is a positive integer}
and $\gamma\in(0,1)$. One of the primary examples of a weakly singular kernel is \bl{\begin{align}\label{3.wk1}K(t) =\frac{1}{\Gamma( \tau)}\frac{1}{t^{1-\tau}}, \quad 0<\tau<1,\end{align}}
\bl{where  $\Gamma(\tau)=\int_{0}^{\infty}t^{\tau-1}e^{-t}\mathrm{d}t$} is the \emph{Euler Gamma function}. For different choices of the parameters, this equation represents well-known mathematical models, the Burgers equation and the Huxley equation.
\subsection{Literature survey}
\bl{The numerical approximation of the generalized Burgers-Huxley equation (GBHE) has been widely studied in 1D using various numerical techniques \cite{HNS, KHA, KSM, SGZ}. A priori error estimates have been rigorously analyzed through conforming finite element methods (FEM) \cite{MKH}. For $\alpha = 1$ and $\eta = \beta = 0$, the GBHE simplifies to the nonlinear Burgers’ equation, a widely studied model that describes numerous physical phenomena such as shock flows, gas dynamics, nonlinear wave propagation, turbulence, traffic dynamics, convection-diffusion processes, boundary layer behavior, and acoustic attenuation \cite{Rad,ZLi}. Variants of the Burgers' equation, including the Burgers-Fisher \cite{YJi}, Fisher–Kolmogorov \cite{CBD}, and KdV-Burgers \cite{KRS} equations, extend its scope to diverse scientific applications. Significant progress has been made in analyzing the 2D Burgers’ equation, with studies addressing both analytical and numerical aspects \cite{Elt, KRS, ZYa, HAb, LWZ, SSA}. Additionally, coupled (system) versions of the Burgers' equation have been explored in the literature \cite{Fle, Bah, KRS, ZOW, LCS}, offering insights into more complex interactions and suggesting a promising direction for future research. Despite these advances, studies on the GBHE in higher dimensions remain limited \cite{EMR, HSI, LWH}. Notably, the existence of an exact traveling wave solution for the 2D GBHE, as reported in \cite{EMR}, highlights its importance as a benchmark model for validating numerical methods.}

\bl{The discussions on conforming, non-conforming, and discontinuous FEM for the stationary GBHE in $d$-dimensions $(d=2,3)$ can be found in \cite{KMR}.
	Recent contributions have shifted their focus towards establishing well-posedness and a priori error estimates for the GBHE with a weakly singular kernel \eqref{3.GBHE}. Initially explored through conforming FEM \cite{GBHE}, these analyses have been extended to incorporate the Crouzeix-Raviart element and the discontinuous Galerkin (DG) element in spatial discretization, coupled with backward Euler for time discretization \cite{GBHE2}. The literature also encompasses additional investigations regarding a priori error estimates for analogous models, as evidenced by works such as \cite{ZNy,MTW,CTs, LTW,MMh,MBM} and references therein, offering a comprehensive exploration of the existing literature on this subject.}

In recent decades, a posteriori error estimation has emerged as a crucial component in crafting efficient adaptive meshing procedures to strategically control and minimize errors in numerical simulations. The literature dedicated to a posteriori error analysis for both elliptic and parabolic problems has attained a high level of maturity, as documented by numerous works over this period \cite{KJC,PMa,FNJ,MNo,Rcal,NSo,LMC,GPa,EVM,EOJ,SNP,CSG,HAR}. This list, while extensive, is not exhaustive, and for a more detailed discussion, readers are directed to \cite{AJT} and \cite{VERB}.

However, the field of hyperbolic and integro-differential equations is still witnessing ongoing advancements. The GBHE with memory serves as a bridge between hyperbolic and parabolic equations. Specifically, differentiation leads to a hyperbolic equation in the presence of a smooth kernel, such as $K(t)=1$ or setting $\tau = 1$ in \eqref{3.wk1}. Conversely, singular kernels (taking $\tau\rightarrow 0$ in \eqref{3.wk1}) yield equations of the parabolic type. A significant focus in the literature has been on a posteriori error estimates for linear parabolic integro-differential equations \cite{WLD, GMMRIMA, GMMRMC, GMMRJSc, JCA}, with various authors exploring this area, presenting error estimates under different norms and employing diverse methodologies.

The a posteriori error analysis presented in this work adopts a residual-type approach, drawing on abstract results related to spatial estimators. Specifically, these concepts are applied to discontinuous Galerkin schemes when addressing parabolic problems \cite{GOC}. We decompose the discrete solution into conforming and non-conforming contributions, incorporating a reconstruction technique, as done in \bl{\cite{BKM}}. Our work is organized into three main parts: firstly, we present the error estimator for the stationary GBHE. In the subsequent section, we propose an estimator for the semi-discrete method for GBHE with a weakly singular kernel using the discontinuous Galerkin finite element method (DGFEM) in space. Finally, we discuss the a posteriori error estimation for the time discretization using backward Euler (BE) and Crank-Nicolson (CN) methods with DGFEM in space.

This thorough exploration of a posteriori error estimation for GBHE, with a specific focus on weakly singular kernels, contributes to the evolving landscape of research in the field, offering valuable insights and methodologies for future investigations.
\subsection{Main Contribution}
As far as we are aware, this study marks the pioneering effort in presenting posteriori error assessments for nonlinear partial differential equations (PDEs) featuring a weakly singular kernel. Moreover, there is a notable absence of literature discussing posteriori error estimates for both stationary and non-stationary GBHE. Furthermore, we have successfully demonstrated the optimal $\L^2$ error estimate for both stationary and non-stationary GBHE. The primary advancements of this investigation include:
\begin{itemize}
	\item \textbf{$\L^2$ error estimates: } 
	This article expands upon the error estimates in the ${\L}^2$ norm for SGBHE and GBHE with memory. This extension builds on existing results for error estimates in the energy norm, as established in \cite{KMR} and \cite{GBHE2}. Through the utilization of the duality argument and the $A_h$ projection defined in \eqref{3.Aproj}, we present rigorous proofs for these ${\L}^2$ error estimates. Significantly, our analysis reveals the optimality of these error estimates for both stationary and non-stationary models.
	\item \textbf{A posteriori error estimates: }
	\begin{itemize}
		\item \textbf{Stationary Generalized Burgers-Huxley equation:} We introduce an error-based estimator, demonstrating its efficiency and reliability. 
		The Burgers equation serves as a foundational model for equations akin to the Navier-Stokes equation, thereby rendering our proposed estimator versatile and applicable to a wide range of related equations.
		\item \textbf{Semi-discrete scheme:} 
		When dealing with the GBHE featuring a weakly singular kernel, this research introduces a residual-based error estimator tailored for a semi-discrete scheme employing DGFEM. This unique estimator incorporates historical data, accommodating the impact of the memory term, and is substantiated as a reliable tool in the analysis.
		\item \textbf{Fully discrete scheme:} We explore posterior error estimates using backward Euler and Crank-Nicolson schemes in the fully discrete case. Our proposed estimator, applicable to both schemes, captures the influence of past history and accommodates mesh changes at different times, as defined in equation \eqref{3.timeeff}.
		\item \textbf{Numerical Validation:} To substantiate our theoretical discoveries through numerical validation, we present data demonstrating the efficacy of the suggested indicator in establishing a dependable limit for error during uniform refinement. An adaptive strategy has been employed to identify singularities in the solution, thereby reinforcing the practical viability of our approach.
	\end{itemize}
\end{itemize}
\subsection{Outline}
The paper is organized as follows:
Section \ref{sec2} establishes preliminary notations and introduces the weak formulation for the model problem \eqref{3.GBHE}, laying the foundation for our analysis. We also define the necessary projections to support the verification of $\L^2$ error estimates.
\bl{In Section \ref{sec3}, the focus is on the stationary case (SGBHE). Initially, we establish optimal error estimates in the $\L^2$-norm and subsequently present a posteriori error analysis to showcase the reliability and efficiency of the proposed estimator for the spatial discretization of SGBHE using DGFEM.}
Transitioning to the semi-discrete scheme, Section \ref{sec4} employs DGFEM in space to derive $\L^2$-error estimates and a posteriori error estimates.
The fully discrete scheme is introduced in Section \ref{sec5}, utilizing backward Euler and Crank-Nicolson methods in time coupled with DGFEM in space. The reliability of the proposed estimators is verified for both schemes.
Section \ref{sec6} delves into numerical computations, validating the reliability of the proposed estimators for the total error under both uniform and adaptive refinement for various types of domains and solutions. This validation encompasses three distinct cases: SGBHE, GBHE with memory using BE in time, and GBHE with memory using CN in time.

\section{Problem Formulation}\setcounter{equation}{0} \label{sec2}
\subsection{Notations} Let $W^{m,p}(\Omega)$ represents the standard Sobolev space with the corresponding norm $\|\cdot\|_{\W^{m,p}}$ (The domain specification is omitted whenever clear from the context). In particular for $m= 0,$ we have the Lebesgue spaces $\L^p(\Omega)$ with norm $\|\cdot\|_{\L^p}$ and for $p=2,$ we have $W^{m,2}(\Omega) = \H^m(\Omega)$ which denotes the Hilbert space with norm represented by $\|\cdot\|_{\H^k}$. The dual space of $\L^p(\Omega)$ is given by the $\L^{\frac{p}{p-1}}(\Omega)$. The space $\H_0^1(\Omega)$ represents the closure of $C_0^{\infty}(\Omega)$ (set of infinitely differentiable function with compact support) with respect to $\H^{1}(\Omega)$ norm and its dual space is given by $\H^{-1}(\Omega)$. In other words $\H_0^1(\Omega) := \{u \in \H^1(\Omega) : u|_{\partial\Omega} = 0$ a.e.\}.
\subsection{Weak formulation}
To derive the weak formulation, we multiply equation \eqref{3.GBHE} by a test function $v$ and employ integration by parts, incorporating boundary conditions. This leads to the following weak formulation: Given initial data $u_0 \in \L^2(\Omega)$ and external forcing $f\in \L^2(0,T;\L^{2}(\Omega))$, seek $u\in \mathrm{L}^{\infty}(0,T; \L^{2}(\Omega))\cap\L^{2}(0,T; \H_0^1(\Omega))\cap\L^{2(\delta+1)}(0,T; \L^{2(\delta+1)}(\Omega)),$ with $\partial_tu\in\L^{\frac{2(\delta+1)}{2\delta+1}}(0,T;\H^{-1}(\Omega)+\L^{\frac{2(\delta+1)}{2\delta+1}}(\Omega)),$ such that
\begin{equation}\label{3.weakGBHE}
	\left\{
	\begin{aligned}
		\langle\partial_tu(t),v\rangle+ \mathcal{A}(u(t),v)&+\bl{\eta(( K*\nabla u)(t),\nabla v)}=\langle f(t),v\rangle, \\
		(u(0),v)&=(u_0,v), \quad  \forall~~ v\in \H_0^1(\Omega)\cap\L^{2(\delta+1)}(\Omega),
	\end{aligned}
	\right.
\end{equation}
where \begin{align}\label{3.weakcont}
	&\mathcal{A}(u,v) = \nu (\nabla u,\nabla v)+\alpha b(u,u,v)-\beta\langle c(u),v\rangle,
	\\&b(u,v,w)=\int_{\Omega}u^{\delta}\sum_{i=1}^{d}\frac{\partial v}{\partial x_i}w\mathrm{~d}x,\quad 
	c(u)=u(1-u^{\delta})(u^{\delta}-\gamma).
\end{align}
Throughout this work, the kernel $K(\cdot)$ is assumed to be \emph{weakly singular positive kernel}. This means that for any positive constant $T >0$, we have 
\bl{\begin{align}\label{pk}
		\int_0^T \int_0^t K(t-\tau)u(\tau)u(t) \mathrm{~d}\tau \mathrm{~d}t\geq 0,\  \ \forall ~~u\in \L^2(0, T). 
\end{align}}
The well-posedness of \eqref{3.weakGBHE} and a priori error estimates using both conforming and DG finite element methods have been previously addressed in \cite{GBHE} and \cite{GBHE2}, respectively. The subsequent regularity is established for the solution of the GBHE:
\begin{theorem}\label{2M.thm33}
	Let $\Omega\subset \mathbb{R}^d$, where $d = 2,3$, be either convex or a domain with $C^2$ boundary. \bl{For $ \delta \in \mathbb{N}$ for $d=2$, and $ \delta = 1, 2$ for $d=3$}, if $u_0\in\H^2(\Omega)\cap\H_0^1(\Omega)$, then the following properties hold:
	\begin{enumerate}[label=(\roman*)]\label{reg}
		\item If $f\in\H^1(0,T;\L^2(\Omega))$, then $\partial_tu\in\L^{\infty}(0,T;\L^2(\Omega))\cap\L^2(0,T;\H_0^1(\Omega))$.
		\item If $f\in\H^1(0,T;\L^2(\Omega))\cap \L^2(0,T;\H^1(\Omega))$, then $u \in \L^{\infty}(0,T;\H^2(\Omega))$.
	\end{enumerate}
\end{theorem}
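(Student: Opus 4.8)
The plan is to establish the two regularity statements by a Galerkin (or semigroup) approximation argument combined with careful energy estimates, differentiating the equation in time to handle the higher-order bounds. Throughout, the weakly singular positive kernel property \eqref{pk} is the key structural tool that keeps the memory term from destroying the dissipativity provided by the $-\nu\Delta u$ term; in particular one uses that $\eta\int_0^T (K*\nabla u)(t)\cdot\nabla u(t)\,\mathrm dt \ge 0$ so that the convolution term can be dropped (with the correct sign) whenever we test the equation against $u$ or $\partial_t u$.

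For part (i), I would first test the (Galerkin-truncated) equation against $\partial_t u$. The diffusion term yields $\tfrac{\nu}{2}\tfrac{\mathrm d}{\mathrm dt}\|\nabla u\|_{\L^2}^2$, the memory term is controlled after an integration in time using \eqref{pk} (writing $K*\nabla u$ paired with $\nabla\partial_t u$ and integrating by parts in $t$, or alternatively differentiating the convolution and using the positivity after one time integration — this is the standard trick for integro-differential parabolic problems), the forcing pairs as $\langle f,\partial_t u\rangle \le \tfrac14\|\partial_t u\|_{\L^2}^2 + \|f\|_{\L^2}^2$, and the nonlinear terms $\alpha b(u,u,\partial_t u)$ and $\beta\langle c(u),\partial_t u\rangle$ are absorbed using the $\H^2$-regularity of $u$ in space (from the a priori theory of \cite{GBHE},\cite{GBHE2}), Sobolev embeddings $\H^1(\Omega)\hookrightarrow \L^p(\Omega)$ for the relevant $p$ in dimensions $d=2,3$ (this is exactly where the restriction $\delta=1,2$ for $d=3$ enters, to keep the polynomial nonlinearity of degree $2\delta+1$ within the admissible Sobolev range), and Young's inequality. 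This gives $\partial_t u\in\L^2(0,T;\H_0^1)$ and, after also using the equation to bound $\nu\Delta u + \eta K*\Delta u = f - \partial_t u - \alpha u^\delta\sum\partial_i u + \beta c(u)$ pointwise in time, $\partial_t u\in\L^\infty(0,T;\L^2)$. The initial value $\partial_t u(0)$ is identified from the equation at $t=0$ using $u_0\in\H^2\cap\H_0^1$ and $f(0)\in\L^2$ (which makes sense since $f\in\H^1(0,T;\L^2)\hookrightarrow C([0,T];\L^2)$), and one differentiates \eqref{3.GBHE} formally in $t$, tests the differentiated equation against $\partial_t u$, and runs a Gronwall argument using $f_t\in\L^2(0,T;\L^2)$ to close the estimate rigorously on the Galerkin level before passing to the limit.

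For part (ii), with the extra hypothesis $f\in\L^2(0,T;\H^1)$, I would use the bound on $\partial_t u\in\L^\infty(0,T;\L^2)$ from (i) together with elliptic regularity: rewrite \eqref{3.GBHE} as a stationary (in $x$) elliptic problem at each fixed $t$, $-\nu\Delta u(t) = g(t) := f(t) - \partial_t u(t) - \alpha u(t)^\delta\sum_i\partial_i u(t) + \beta c(u(t)) + \eta\int_0^t K(t-\tau)\Delta u(\tau)\,\mathrm d\tau$, and show $g\in\L^\infty(0,T;\L^2)$; the only delicate term is the memory term $\eta K*\Delta u$, which I would bound using $\Delta u\in\L^2(0,T;\L^2)$ (already known) and the fact that $K\in\L^1(0,T)$ (Young's convolution inequality), giving $K*\Delta u\in\L^2(0,T;\L^2)$ — so one actually gets $u\in\L^2(0,T;\H^2)$ from this route, and the $\L^\infty$-in-time upper bound follows by a bootstrap once $\partial_t u\in\L^\infty(0,T;\L^2)$ is inserted into the elliptic estimate for a.e.\ $t$, using the convexity / $C^2$-boundary assumption on $\Omega$ to get the full $\H^2$ elliptic regularity constant uniform in $t$. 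The nonlinear terms in $g$ are handled by the same Sobolev-embedding bookkeeping as in (i), e.g.\ $\|u^\delta\nabla u\|_{\L^2}\le \|u\|_{\L^\infty}^\delta\|\nabla u\|_{\L^2}$ controlled via $\H^2\hookrightarrow\L^\infty$.

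The main obstacle I anticipate is the rigorous treatment of the memory (convolution) term when testing against $\partial_t u$ in part (i): naively integrating by parts in time produces a boundary term $\eta(K*\nabla u)(T)\cdot\nabla u(T)$ and a term involving $K(0^+)$, which is singular for the kernel \eqref{3.wk1}; the fix is to avoid integrating the convolution by parts and instead keep it paired with $\nabla\partial_t u$, move one time-derivative onto $u$ inside the positivity structure, or regularize the kernel $K_\e$ and pass to the limit, exploiting \eqref{pk} (which holds for the regularized kernels as well) to retain a nonnegative contribution — this requires some care but is standard for weakly singular positive kernels. A secondary technical point is justifying that $\partial_t u$ is an admissible test function, which is why the whole argument is run first at the Galerkin level and then passed to the limit using weak-$*$ compactness and lower semicontinuity of norms.
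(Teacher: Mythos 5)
You should note first that this paper does not actually prove Theorem \ref{2M.thm33}: the regularity is quoted from the earlier works \cite{GBHE} and \cite{GBHE2}, so there is no in-paper argument to compare against. Your sketch follows the standard route those references take for this class of integro-differential problems: Galerkin approximation, testing the time-differentiated equation with $\partial_t u$, using the positivity \eqref{pk} of the weakly singular kernel to retain (or discard, with the good sign) the memory contribution, a Gronwall argument with $f_t\in\L^2(0,T;\L^2)$, and elliptic regularity on the convex/$C^2$ domain for part (ii); your identification of the pairing of $K*\nabla u$ with $\nabla\partial_t u$ as the delicate point, and the fix by differentiating the convolution (giving $K(t)\nabla u_0 + (K*\nabla\partial_t u)(t)$, the first term harmless since $K\in\L^1(0,T)$ and $u_0\in\H^2\cap\H^1_0$) or by regularizing the kernel, is exactly the standard treatment, as is the role of $\delta=1,2$ in $d=3$ via the Sobolev exponents for the degree-$(2\delta+1)$ nonlinearity.

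The one step that does not go through as written is the conclusion of part (ii). Young's convolution inequality with only $K\in\L^1(0,T)$ and $\Delta u\in\L^2(0,T;\L^2)$ yields $K*\Delta u\in\L^2(0,T;\L^2)$, and the subsequent ``bootstrap'' you invoke cannot upgrade this to the $\L^\infty$-in-time control you need: for $\tau\le\frac12$ in \eqref{3.wk1} the kernel is not square integrable, so the convolution of an $\L^2$-in-time function need not be bounded in time, and hence $g(t)$ is not shown to lie in $\L^{\infty}(0,T;\L^2(\Omega))$. The correct closing move is to read the equation pointwise in time as a Volterra inequality for $\phi(t):=\|\Delta u(t)\|_{\L^2}$, namely $\nu\,\phi(t)\le \|\tilde g(t)\|_{\L^2}+\eta\int_0^t K(t-s)\,\phi(s)\,\mathrm{d}s$ with $\tilde g=f-\partial_t u-\alpha u^{\delta}\sum_i\partial_{x_i}u+\beta c(u)\in\L^{\infty}(0,T;\L^2(\Omega))$ (available from part (i) and the embeddings), and then apply a weakly singular (Henry-type) Gronwall lemma or the resolvent-kernel representation valid for positive $\L^1$ kernels; the local integrability of $\phi$ needed to launch that argument is exactly what your first pass $u\in\L^2(0,T;\H^2)$ supplies. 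With that replacement, and the Galerkin-level justification you already flag for part (i), the outline is sound and matches the argument the cited references carry out.
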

\subsection{Domain Discretization}
The domain $\Omega$ is partitioned into shape-regular meshes, comprised of triangles or rectangles in 2D and tetrahedra in 3D, denoted by $\mathcal{T}_h$. The symbols $\mathcal{E}_h$, $\mathcal{E}^i_h$, and $\mathcal{E}^{\partial}_h$ represent the sets of all edges, interior edges, and boundary edges of the triangulation, respectively. The diameter of an element $K$ is denoted as $h_K$, and the length of an edge is represented by $h_E$. Within the context of a given $\mathcal{T}_h$, $C^{0}(\mathcal{T}_h)$ and $\H^s(\mathcal{T}_h)$ refer to the broken spaces associated with continuous and differentiable function spaces, respectively.

The shared edge between two mesh cells $K_\pm$ is denoted as $E=K_+\cap K_-\in\mathcal{E}^i_h$. Furthermore, the traces of functions $w\in C^0(\mathcal{T}_h)$ on edge $E$ of $K_\pm$ are designated as $w_{\pm}$, respectively. The average operator $\{\!\{\cdot\}\!\}$ and the jump operator $[\![\cdot]\!]$ on edge $E$ are defined as:
\begin{align*}
	\{\!\{w\}\!\}=\frac{1}{2}(w_+ + w_-) \quad  \text{and}	\quad[\![w]\!]={w_+\mathbf{{n}_+} + w_-\mathbf{{n}_-}},
\end{align*}
respectively. 
If ${w}\in C^1(\mathcal{T}_h),$ we define the jump of the normal derivative as $[\![\partial{w}/\partial\mathbf{{n}}]\!]=\nabla({w}_+ -{w}_-)\cdot\mathbf{{n}_+}, $ where $\mathbf{{n}\pm}$ denotes the unit outward normal vectors for the respective mesh cells $K_\pm$. In the case of $E\in K_+\cap\partial\Omega$, the jump is ${[\![{w}]\!]={w}_+\mathbf{{n}_+}}$ and the average is $\{\!\{w\}\!\}=w_+$. Let $u_e$ represent the exterior trace of the function $u$, and for the boundary edges, we set $u^e = 0.$ The local gradient on each $K\in\mathcal{T}_h$ is denoted by $\nabla_h$, with $(\nabla_h{w})|_K = \nabla({w}|_K).$ \bl{Throughout this work, the constant $C$ represents a generic constant independent of the mesh size $h$}. The discrete space for the DG formulation is defined as
\begin{align}\label{2.dgsubspace1}
	V_h = {V}^{DG}_h=\{{v}\in \L^2(\Omega): \forall\  K\in \mathcal{T}_h : {v}|_K \in \mathcal{P}_k(K)\},
\end{align}
where $\mathcal{P}_k(K)$ denotes the space of polynomials of degree $k$ on $K.$
The semi-discrete weak formulation for the \eqref{3.GBHE} corresponding to DGFEM is given as:  Find $u_h\in V_h$, for all $t\in(0,T)$ such that
\begin{align}\label{2.dgweakform}
	\nonumber(\partial_tu_h(t), \chi(t))+\mathcal{A}_{DG}(u_h(t),\chi(t))+\eta( K*a_{DG}( u_h(s),  \chi(t)))&=( f(t),\chi(t)),\\
	(u_h(0),\chi(t))&=(u_h^0,\chi(t)), 
\end{align}
$\forall$ $\chi \in V_h$, where
\begin{align}\label{3.ADGf} 
	\mathcal{A}_{DG}(u,v) = \nu a_{DG}(u,v) + \alpha b_{DG}(u,u,v)-\beta(c(u),v).
\end{align}
The operator $\mathcal{A}_{DG}$ consists of three terms, where the first term is the discrete diffusion term given as
\begin{align}\label{3.adg}
	\no a_{DG}({u},{v})&=(\nabla_h {u}, \nabla_h {v})-\sum_{E\in\mathcal{E}_h}\int_E {\{\!\{\nabla_h {u}\}\!\}}\!\cdot\![\![{v}]\!]\mathrm{~d}{s}\\&\quad-\sum_{E\in\mathcal{E}_h}\int_E {\{\!\{\nabla_h {v}\}\!\}}\!\cdot\![\![{u}]\!]\mathrm{~d}{s}
	+\sum_{E\in\mathcal{E}_h}\int_E\mathfrak{K}_h [\![{u}]\!]\!\cdot\![\![{v}]\!]\mathrm{~d}{s},
\end{align}
where the parameter \bl{$\mathfrak{K}_h=\frac{\mathfrak{K}}{h_E}$ is the penality term and $\mathfrak{K}$} is chosen sufficiently large to ensure the stability of the formulation (see, e.g.,  \cite{Arn}). The discrete advection term is given as 
\bl{\begin{align}\label{3.bdg}
		\no	b_{DG}(\mathbf{w},u_h,v)=\frac{1}{\delta+2}\Bigg(& \sum_{K\in \mathcal{T}_h} \int_{K} \mathbf{w}\cdot \nabla u_h v \mathrm{~d}x 
		+\sum_{K\in \mathcal{T}_h} \int_{\partial K} \hat{\mathbf{w}}_{u_h}^{up}{v} \mathrm{~d}s\\& -\sum_{K\in \mathcal{T}_h} \int_{K} \mathbf{w}\cdot \nabla v {u_h} \mathrm{~d}x 
		-\sum_{K\in \mathcal{T}_h} \int_{\partial K} \hat{\mathbf{w}}_{v}^{up}{u_h} \mathrm{~d}s\Bigg),
\end{align}}
where the upwind flux is given as 
\begin{align*}
	\hat{\mathbf{w}}_{u}^{up}=\frac{1}{2}\left[\mathbf{w}\cdot\mathbf{n}_K -|\mathbf{w}\cdot \mathbf{n}_K|\right](u^e\!-\!u),
\end{align*}
\bl{with $\mathbf{w} = (w, w)^T$, in particular, $\mathbf{w} = (u^\delta, u^\delta)^T$.} The reaction term is defined similarly to the continuous case and is expressed as $c(u)=u(1-u^{\delta})(u^{\delta}-\gamma)$. 
The discrete (DG) norm in this setting is given as
\begin{align}\label{3.dgnorm}
	|\!|\!| v|\!|\!|^2:= \sum_{K\in\mathcal{T}_h}\|\nabla_h v\|_{\L^2(K)}^2 + \sum_{E\in\mathcal{E}_h}\mathfrak{K}_h\|[\![v]\!]\|_{\L^2(E)}^2.
\end{align}
\begin{remark}
	The nonlinear operators, satisfies the following properties:
	\begin{enumerate}
		\item \bl{$b_{DG}(\u,u,u) = 0$},
		\item $b_{DG}(\w,u,v) = -b_{DG}(\w,v,u)$. 
		\item $(c(u),u) = (1+\gamma)(u^{\delta+1},u) -\gamma \|u\|_{\L^2}^2 - \|u\|^{2(\delta+1)}_{\L^{2(\delta+1)}}. $
	\end{enumerate}
\end{remark}

In the upcoming sections, we systematically present $\L^2$ and a posteriori error estimates, ordered by increasing complexity of the problem, facilitating a more accessible understanding.
\section{Stationary generalised Burgers'-Huxley equation}\setcounter{equation}{0}\label{sec3}
The weak formulation of the stationary generalised Burgers'-Huxley equation (SGBHE) is expressed as follows: Find $u\in \H_0^1(\Omega),$ such that:
\begin{align}\label{3.SGBHEweak}
	\mathcal{A} (u,v)= \langle f,v\rangle, \qquad \quad \forall~~ v\in \H_0^1(\Omega).
\end{align}
where the \bl{{semilinear form} $\mathcal{A}(\cdot,\cdot)$} is as defined in \eqref{3.weakcont}. Using the DGFEM, the discrete counterpart of the SGBHE is given as: Find $ u_h \in V_h$ such that 
\begin{align}\label{3.SGBHE}
	\mathcal{A}_{DG} (u_h,\chi)= \langle f_h,\chi\rangle, \qquad \forall~ \chi\in V_h,
\end{align}
where $\mathcal{A}_{DG}$ is as defined in \eqref{3.ADGf} and \bl{$f_h$ denote the piecewise polynomial approximations of $f$ in $V_h$}. 
A thorough examination of the existence, uniqueness, and regularity outcomes for both weak formulations, as specified in \eqref{3.SGBHEweak} and \eqref{3.SGBHE}, along with a priori error estimates in the energy norm, is exhaustively discussed in \cite{KMR}.
\subsection{Technical results}
Initially, we establish the necessary projections essential for validating the $\L^2$ error estimates. Subsequently, we present technical Lemmas that play a pivotal role in the subsequent analysis.

For $v\in \L^2(\Omega)$, we denote the $\L^2(\Omega)$ projection  of $v$ on $V_{h}$:
\begin{align}
	\Pi_h v \in V_{h},\quad (\Pi_hv - v, \phi_h) = 0 \quad \forall~~ \phi_h\in V_{h},
\end{align}
and for $K\in \mathcal{T}_h$, the function $\Pi_h v|_K$ is the $\L^2(K)-$projection of $v|_K$ on $P^k(K)$. Let $p\in [1,k]$,  then the following estimates 
\begin{align}
	\|\Pi_hv-v\|_{\L^2(K)} &\leq C h^{p+1}|v|_{\H^{p+1}(K)},\label{3.aproj1}\\
	|\Pi_hv-v|_{\H^1(K)} &\leq C h^{p}|v|_{\H^{p+1}(K)},\label{3.aproj2}
\end{align}
for all $v\in \H^{p+1}(K)$, $K\in\mathcal{T}_h.$ 
Let us define a function \bl{$ u_h^*$ as the $`A_h-$projection' of $u$}, as defined in  \cite{DFK}, on $V_{h}$ satisfying
\begin{align}\label{3.Aproj}
	\bl{	u_h^* \in V_{h}\cap \L^{\infty}(\Omega), \qquad a_{DG}(u_h^*,\phi_h ) =a_{DG}(u, \phi_h) \quad \forall ~~ \phi_h \in V_{h}\cap \L^{\infty}(\Omega),}
\end{align}
\bl{where $a_{DG}(\cdot,\cdot)$ is defined in \eqref{3.adg}.} 
Let us recall that the operator $a_{DG}(\cdot,\cdot)$ is coercive, such that 
\begin{align}\label{3.acor}
	a_{DG}(\phi_h,\phi_h) \geq C_{c}	|\!|\!| \phi_h|\!|\!| ^2, 
\end{align}
where $C_c$ is the coercivity constant. 
In the subsequent analysis, we decompose the error as $e = u - u_h = u - u_h^* + u_h^* - u_h := \omega + \xi$. Initially, we focus on bounding the functions $\omega = u - u_h^*$ in the $|\!|\!| \cdot|\!|\!| $ and $\L^2(\Omega)$ norms.
\begin{lemma}\label{3.Lemma3.1}
	For $u\in \H^{p+1}(\Omega)$ with $1\leq p \leq k$, there exists a constant $C>0$ such that 
	\begin{align}
		|\!|\!| \omega	|\!|\!|  &\leq Ch^p|u|_{\H^{p+1}}.
	\end{align}
	\begin{proof}
		Let us set $\hat{u} = \Pi_h u$ and $\phi_h = \hat{u} - u_h^*$. By the coercivity of $a_{DG}(\cdot,\cdot),$ \eqref{3.Aproj} and further using \eqref{3.aproj2}, we have 
		\begin{align}
			C_c	|\!|\!| \phi_h	|\!|\!| ^2 \leq a_{DG}(\phi_h,\phi_h) &= a_{DG}(\phi_h,\phi_h) + a_{DG}(u_h^* - u,\phi_h) \\&= a_{DG}(\hat{u} - u,\phi_h)\\&\leq C 	|\!|\!| \hat{u} - u	|\!|\!| 	|\!|\!| \phi_h	|\!|\!| \\&\leq C h^p |u|_{\H^{p+1}}	|\!|\!| \phi_h	|\!|\!| \label{3.55}
		\end{align}
		Now, using the triangle inequality, \eqref{3.55} and the estimate \eqref{3.aproj2}, we attain
		\begin{align}
			|\!|\!| \omega	|\!|\!|  = 	|\!|\!| u - u_h^*	|\!|\!|  \leq	|\!|\!| u-\hat{u}	|\!|\!| +	|\!|\!| \hat{u} - u_h^*	|\!|\!|   \leq Ch^p|u|_{\H^{p+1}}.
		\end{align}
	\end{proof}
\end{lemma}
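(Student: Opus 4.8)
\textbf{Proof proposal for Lemma \ref{3.Lemma3.1}.}

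The plan is to exploit the defining property of the $A_h$-projection, namely that $a_{DG}(u_h^* - u, \phi_h) = 0$ for all admissible test functions, together with the coercivity estimate \eqref{3.acor} and the approximation properties \eqref{3.aproj1}--\eqref{3.aproj2} of the $\L^2$-projection $\Pi_h$. Since $\omega = u - u_h^*$ is not itself in the discrete space, the natural move is to split it through the computable interpolant $\hat u = \Pi_h u$, writing $\omega = (u - \hat u) + (\hat u - u_h^*)$, and then to estimate each piece separately. The first piece $u - \hat u$ is controlled directly by \eqref{3.aproj2}; the second piece $\phi_h := \hat u - u_h^*$ lies in $V_h$, so coercivity applies to it.

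First I would test the coercivity bound \eqref{3.acor} against $\phi_h = \hat u - u_h^*$ and insert a zero of the form $a_{DG}(u_h^* - u, \phi_h)$, which vanishes by the definition \eqref{3.Aproj} of the $A_h$-projection (here one must check that $\phi_h \in V_h \cap \L^\infty(\Omega)$, which holds since $\phi_h$ is a piecewise polynomial). This collapses the expression to $a_{DG}(\hat u - u, \phi_h)$. Next I would apply boundedness/continuity of the bilinear form $a_{DG}(\cdot,\cdot)$ with respect to the DG norm $|\!|\!| \cdot |\!|\!|$ to bound this by $C \, |\!|\!| \hat u - u |\!|\!| \, |\!|\!| \phi_h |\!|\!|$, then cancel one factor of $|\!|\!| \phi_h |\!|\!|$ and invoke \eqref{3.aproj2} to get $|\!|\!| \phi_h |\!|\!| \leq C h^p |u|_{\H^{p+1}}$. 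Finally, the triangle inequality $|\!|\!| \omega |\!|\!| \leq |\!|\!| u - \hat u |\!|\!| + |\!|\!| \phi_h |\!|\!|$ together with \eqref{3.aproj2} applied to the first term finishes the bound. Summing the elementwise estimates \eqref{3.aproj2} over $K \in \mathcal{T}_h$ produces the global seminorm $|u|_{\H^{p+1}}$ on the right-hand side.

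The main subtlety—rather than a genuine obstacle—is justifying the continuity estimate $a_{DG}(\hat u - u, \phi_h) \leq C |\!|\!| \hat u - u |\!|\!| \, |\!|\!| \phi_h |\!|\!|$, since $a_{DG}$ as written in \eqref{3.adg} involves the consistency term $\sum_E \int_E \{\!\{\nabla_h v\}\!\}\cdot[\![u]\!]$, and bounding $\{\!\{\nabla_h \phi_h\}\!\}$ on edges requires a discrete trace (inverse) inequality on $\phi_h$ to control the edge norm of the gradient by its volume norm; this is standard for piecewise polynomials but relies on $\phi_h$ being discrete, which is exactly why the split through $\hat u$ is used. A secondary point is that the $\H^{p+1}$-regularity hypothesis on $u$ is what legitimizes writing the approximation error of $\Pi_h u$ via \eqref{3.aproj1}--\eqref{3.aproj2}. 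Everything else is routine: insertion of a vanishing term, Cauchy--Schwarz, and the triangle inequality.
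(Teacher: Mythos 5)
Your proposal is correct and mirrors the paper's own argument: the same splitting $\omega = (u-\Pi_h u) + (\Pi_h u - u_h^*)$, insertion of the vanishing term via the $A_h$-projection, coercivity and continuity of $a_{DG}$, the estimate \eqref{3.aproj2}, and the final triangle inequality. Your added remark on the discrete trace inequality needed for continuity of $a_{DG}$ is a fair observation but does not change the route.
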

\begin{lemma}\label{3.Lemma 3.5}
	Under analogous assumptions to those in Lemma \ref{3.Lemma3.1}, there exists a constant $C>0$ such that
	\begin{align}
		\|\omega\|_{\L^2}&\leq Ch^{p+1}|u|_{\H^{p+1}}.
	\end{align}
	\begin{proof}
		The proof closely follows the approach outlined in \cite[Lemma 4.2]{DFK}.
	\end{proof}
\end{lemma}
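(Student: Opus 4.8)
The plan is to reproduce, in our DGFEM setting, the standard Aubin--Nitsche duality argument that was used in \cite[Lemma 4.2]{DFK} to upgrade the energy-norm bound of \autoref{3.Lemma3.1} to the optimal $\L^2$ rate. First I would introduce the dual problem: given the error $\omega = u - u_h^*$, let $\psi \in \H_0^1(\Omega)$ solve the (linear, symmetric) problem $a_{DG}(\phi,\psi) = (\omega,\phi)$ for all $\phi$, which at the continuous level is just $-\nu\Delta\psi = \omega$ (or the analogous Laplace-type problem without the $\nu$ factor, matching the normalization of $a_{DG}$ in \eqref{3.adg}); since $\Omega$ is convex with Lipschitz boundary, elliptic regularity gives $\psi \in \H^2(\Omega)\cap\H_0^1(\Omega)$ with $\|\psi\|_{\H^2} \leq C\|\omega\|_{\L^2}$. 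Then write $\|\omega\|_{\L^2}^2 = (\omega,\omega) = a_{DG}(\omega,\psi)$, using consistency of the DG bilinear form applied to the exact-type quantity $u$ together with the definition \eqref{3.Aproj} of the $A_h$-projection.

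Next I would exploit Galerkin orthogonality: by \eqref{3.Aproj}, $a_{DG}(\omega,\phi_h) = a_{DG}(u - u_h^*,\phi_h) = 0$ for every $\phi_h \in V_h \cap \L^\infty(\Omega)$, so in particular we may subtract $a_{DG}(\omega,\Pi_h\psi)$ (or a suitable DG-conforming interpolant of $\psi$) to get $\|\omega\|_{\L^2}^2 = a_{DG}(\omega,\psi - \Pi_h\psi)$. Bounding this via the continuity of $a_{DG}$ on broken Sobolev spaces — here one must be a little careful because $a_{DG}$ is not bounded in the $|\!|\!|\cdot|\!|\!|$ norm alone but in an augmented norm that controls $\{\!\{\nabla_h\cdot\}\!\}$ on edges; the cure is the usual one, estimating the consistency terms edge-by-edge using trace inequalities and the $\H^2$-regularity of $\psi$ — yields $\|\omega\|_{\L^2}^2 \leq C |\!|\!|\omega|\!|\!|_* \, |\!|\!|\psi - \Pi_h\psi|\!|\!|_*$, where $|\!|\!|\cdot|\!|\!|_*$ denotes the augmented DG norm. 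The approximation estimates \eqref{3.aproj1}--\eqref{3.aproj2} (and their edge/trace consequences) give $|\!|\!|\psi - \Pi_h\psi|\!|\!|_* \leq Ch\|\psi\|_{\H^2} \leq Ch\|\omega\|_{\L^2}$, while \autoref{3.Lemma3.1} gives $|\!|\!|\omega|\!|\!| \leq Ch^p|u|_{\H^{p+1}}$; combining and cancelling one factor of $\|\omega\|_{\L^2}$ produces the claimed $\|\omega\|_{\L^2} \leq Ch^{p+1}|u|_{\H^{p+1}}$.

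The main obstacle, and the step deserving the most care, is the handling of the nonconforming consistency terms in $a_{DG}$ in the duality pairing: since $\psi \in \H^2 \subset C^0$ (in $2$D/$3$D convex domains, $\H^2$ embeds into $C^0$), its jumps $[\![\psi]\!]$ vanish on interior edges and on $\partial\Omega$, which kills the symmetrization and penalty terms when $\psi$ is the second argument, but we still need the $\{\!\{\nabla_h\omega\}\!\}\cdot[\![\psi - \Pi_h\psi]\!]$ and $\{\!\{\nabla_h(\psi - \Pi_h\psi)\}\!\}\cdot[\![\omega]\!]$ contributions to be controlled by $h^{p+1}$; this is exactly where one invokes the discrete trace inequality $\|\nabla_h v\|_{\L^2(E)} \leq Ch_E^{-1/2}\|\nabla_h v\|_{\L^2(K)}$ together with \eqref{3.aproj1}--\eqref{3.aproj2} for $\psi$ and the energy bound for $\omega$. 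Everything else — the existence and uniqueness of $\psi$, elliptic regularity on the convex domain, and the final arithmetic — is routine, so I would state those briefly and refer to \cite{DFK} and \cite{Arn} for the DG-specific trace and continuity estimates rather than redoing them in full.
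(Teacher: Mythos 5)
Your proposal is correct and matches the paper's treatment: the paper proves this lemma simply by invoking the duality argument of \cite[Lemma 4.2]{DFK} for the $A_h$-projection, and your Aubin--Nitsche argument (dual problem with $\H^2$ regularity on the convex domain, Galerkin orthogonality from \eqref{3.Aproj}, adjoint consistency of the symmetric form \eqref{3.adg}, and the approximation bounds \eqref{3.aproj1}--\eqref{3.aproj2}) is exactly that argument spelled out. The care you take with the nonconforming edge terms and the augmented DG norm is precisely the DG-specific content the citation covers.
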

The subsequent lemma plays a pivotal role in ensuring the reliability of the estimator, particularly for the estimation of nonlinear terms.
\begin{lemma}\label{2.crnclem111}
	There holds:
	\begin{align*}
		-\alpha[b_{DG}(u_1,u_1,w)-b_{DG}(u_2,u_2,w)]&\le  \frac{\nu}{2}|\!|\!| w|\!|\!|^2+C(\alpha,\nu)\left(\|u_1\|^{\frac{8\delta}{4-d}}_{\L^{4\delta}}+\|u_2\|^{\frac{8\delta}{4-d}}_{\L^{4\delta}}\right)\|w\|_{\L^2}^2,\\
		\text{or ~~}
		-\alpha[b_{DG}(u_1,u_1,w)-b_{DG}(u_2,u_2,w)]&\le  \frac{\nu}{2}|\!|\!| w|\!|\!|^2+\frac{2^{2\delta}\alpha^2}{4\nu(\delta+1)^2}\Big(\|u_1^{\delta}w\|_{\L^2}  +\|u_2^{\delta}w\|_{\L^2}\Big),
	\end{align*}
	where $u_1,u_2\in V_{h}$, $w=u_1-u_2$, \bl{$b_{DG}$ is defined in \eqref{3.bdg},} and $C(\alpha, \nu) = \left(\frac{4+d}{4\nu}\right)^{\frac{4+d}{4-d}}\left(\frac{4-d}{8}\right)(\frac{2^{\delta-1}C\alpha}{(\delta+2)(\delta+1)})^{\frac{4-d}{8}}$ 
	is a positive constant depending on the parameters.
\end{lemma}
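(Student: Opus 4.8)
The plan is to bound the difference $-\alpha[b_{DG}(u_1,u_1,w)-b_{DG}(u_2,u_2,w)]$ by first rewriting it in a form that isolates the factor $w=u_1-u_2$, then applying the skew-symmetry property $b_{DG}(\mathbf{w},v,v)=0$ from the Remark, and finally estimating the resulting trilinear expression with H\"older's and Young's inequalities. The key algebraic observation is that, writing $\mathbf{w}_i=(u_i^\delta,u_i^\delta)^T$, the map $u\mapsto b_{DG}(u,u,w)$ is quadratic-type in its first two slots, so the difference telescopes as
\begin{align*}
	b_{DG}(u_1,u_1,w)-b_{DG}(u_2,u_2,w) = b_{DG}(\mathbf{w}_1,w,w) + \big[b_{DG}(\mathbf{w}_1,u_2,w)-b_{DG}(\mathbf{w}_2,u_2,w)\big] + \cdots,
\end{align*}
but more cleanly one adds and subtracts $b_{DG}(u_1,u_2,w)$ and $b_{DG}(\mathbf{w}_1-\mathbf{w}_2,u_2,w)$; the first grouping uses $b_{DG}(\mathbf{w}_1,w,w)=0$ to kill the diagonal term, and what survives is a single trilinear term involving $u_1^\delta-u_2^\delta$, which factors through $w$ via the elementary bound $|u_1^\delta-u_2^\delta|\le \delta(|u_1|^{\delta-1}+|u_2|^{\delta-1})|w|$ (or the cruder $\le C(|u_1|^{\delta-1}+|u_2|^{\delta-1})|w|$).

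After the reduction, the surviving terms are, schematically, $\int (u_1^\delta-u_2^\delta)\,\nabla u_2 \cdot \mathbf{1}\, w$ plus the corresponding interior/boundary upwind flux contributions from \eqref{3.bdg}. First I would treat the volume term: by H\"older with exponents chosen so that $\nabla u_2$ sits in $\L^2$, one gets $\|u_1^\delta-u_2^\delta\|_{\L^{q}}\|\nabla u_2\|_{\L^2}\|w\|_{\L^{q'}}$, then use $\|u_i^{\delta}w\|$-type factors or the Gagliardo--Nirenberg inequality in dimension $d$ to convert $\|w\|_{\L^{q'}}$ into $\|w\|_{\L^2}^{1-\theta}|\!|\!| w|\!|\!|^{\theta}$; the exponent $\frac{8\delta}{4-d}$ and the constant $\big(\frac{4+d}{4\nu}\big)^{\frac{4+d}{4-d}}$ are exactly what emerges from optimizing the interpolation parameter and then applying Young's inequality with the conjugate pair $\big(\frac{8}{4+d},\frac{8}{4-d}\big)$ to split off $\frac{\nu}{2}|\!|\!| w|\!|\!|^2$. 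For the second (alternative) estimate one instead keeps the factors $\|u_i^\delta w\|_{\L^2}$ intact and applies Young's inequality more directly with weight $\nu$, which produces the explicit constant $\frac{2^{2\delta}\alpha^2}{4\nu(\delta+1)^2}$ coming from the $\frac{1}{\delta+1}$ (resp.\ $\frac{1}{\delta+2}$) normalizations in $b_{DG}$ together with the bound $|u_1^\delta-u_2^\delta|\le 2^{\delta-1}(|u_1^\delta|+|u_2^\delta|)$ and $(a+b)^2\le 2(a^2+b^2)$.

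The DG-specific wrinkle is the upwind flux terms $\int_{\partial K}\hat{\mathbf{w}}_u^{up}\,v\,ds$: these must be shown to contribute terms of the same structure, controllable by $|\!|\!| w|\!|\!|^2$ and the $\L^{4\delta}$ (resp.\ $\L^2$) norms above, using that $|\hat{\mathbf{w}}_u^{up}|\le |\mathbf{w}\cdot\mathbf{n}|\,|u^e-u|$ together with discrete trace inequalities $\|v\|_{\L^2(\partial K)}\le C h_K^{-1/2}\|v\|_{\L^2(K)}$ and the jump seminorm hidden in $|\!|\!|\cdot|\!|\!|$. I expect the main obstacle to be precisely this bookkeeping: tracking the upwind and edge contributions through the telescoping identity and verifying that the interpolation/Young constants aggregate into the stated closed forms without a worse $h$-dependence; the volume-term estimate itself is a routine Gagliardo--Nirenberg plus Young argument once the algebraic reduction via $b_{DG}(\mathbf{w}_1,w,w)=0$ is in place.
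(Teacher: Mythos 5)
There is a genuine gap in your algebraic reduction, and it is precisely the step that makes this lemma work. Your telescoping treats $u\mapsto b_{DG}(u,u,w)$ as if it were a bona fide trilinear form, and after killing $b_{DG}(\mathbf{w}_1,w,w)$ by skew-symmetry the surviving term you propose to estimate is, in your own schematic, $\int (u_1^{\delta}-u_2^{\delta})\,\nabla u_2\, w$ (plus fluxes). Any H\"older bound of this quantity necessarily carries a factor $\|\nabla_h u_2\|_{\L^2}$ (and weights $|u_i|^{\delta-1}|w|$ rather than $|u_i|^{\delta}|w|$), and no amount of Gagliardo--Nirenberg interpolation or Young splitting can absorb $\|\nabla_h u_2\|_{\L^2}$ into the stated right-hand sides, which contain only $|\!|\!| w|\!|\!|^2$, $\|u_i\|_{\L^{4\delta}}$, $\|w\|_{\L^2}$ and $\|u_i^{\delta}w\|_{\L^2}$. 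The derivative must land on $w$, not on $u_2$. The paper (following the cited Lemma 2.4 of the reference for the first bound, and the short display \eqref{2.30} for the second) exploits the conservative structure $u^{\delta}\sum_i\partial_i u=\tfrac{1}{\delta+1}\sum_i\partial_i\!\left(u^{\delta+1}\right)$, so that inside the skew-symmetrized form \eqref{3.bdg} the difference appears as $\left(u_1^{\delta+1}-u_2^{\delta+1},\nabla_h w\right)$ plus upwind contributions; Taylor's formula applied to $s\mapsto s^{\delta+1}$ then gives $|u_1^{\delta+1}-u_2^{\delta+1}|\le (\delta+1)\,2^{\delta-1}\bigl(|u_1|^{\delta}+|u_2|^{\delta}\bigr)|w|$, which is exactly where the intermediate bound $\tfrac{2^{\delta-1}\alpha C}{\delta+1}\bigl(\|u_1^{\delta}w\|_{\L^2}+\|u_2^{\delta}w\|_{\L^2}\bigr)\|\nabla_h w\|_{\L^2}$ and the weights $\|u_i^{\delta}w\|_{\L^2}$ in the second estimate come from.

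Once that reduction is in place, the analytic part of your plan is sound and matches the intended argument: for the first estimate, H\"older gives $\|u_i^{\delta}w\|_{\L^2}\le\|u_i\|_{\L^{4\delta}}^{\delta}\|w\|_{\L^4}$, the broken Gagliardo--Nirenberg inequality converts $\|w\|_{\L^4}$ into $\|w\|_{\L^2}^{(4-d)/4}|\!|\!| w|\!|\!|^{d/4}$, and Young with the conjugate pair you indicate produces the exponent $\tfrac{8\delta}{4-d}$ and $C(\alpha,\nu)$; for the second, keeping $\|u_i^{\delta}w\|_{\L^2}$ and applying Young directly yields $\tfrac{2^{2\delta}\alpha^2}{4\nu(\delta+1)^2}$. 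Your concern about the upwind flux terms is reasonable but secondary: in the paper these are absorbed at the level of the cited estimate on the discrete upwind term (as in the analogous computations for $J_1$ and $J_1'$), and they do not introduce any adverse $h$-dependence. The fix you need is therefore not more bookkeeping on the edges, but replacing your telescoping by the $u^{\delta+1}$-difference identity so that the gradient falls on $w$.
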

\begin{proof}
	The first estimate has been established in \cite[Lemma 2.4]{GBHE2}. We slightly modify the first estimates by employing Taylor’s formula, Cauchy-Schwarz, Hölder's, and Young’s inequality, resulting in:
	\begin{align}\label{2.30}
		-\alpha[b_{DG}(u_1,u_1,w)-b_{DG}(u_2,u_2,w)]
		&\leq \frac{2^{\delta-1}\alpha C}{\delta+1}\sum_{K\in \mathcal{T}_h}(\|u_1^{\delta}w\|_{\L^{2}}+\|u_2^{\delta}w\|_{\L^{2}})\|\nabla_h w\|_{\L^2(K)}\nonumber\\
		&\leq  \frac{\nu}{2}|\!|\!| w|\!|\!|^2+\frac{2^{2\delta}\alpha^2}{4\nu(\delta+1)^2}\|u_1^{\delta}w\|_{\L^2}  +\frac{2^{2\delta}\alpha^2}{4\nu(\delta+1)^2}\|u_2^{\delta}w\|_{\L^2}, 
	\end{align}
	yields the desired result.
\end{proof}

An application of the above Lemma \ref{2.crnclem111} leads to the following outcome:
\begin{lemma}\label{lemma3.2}
	For $u_1,u_2\in V_{h}$, $w=u_1-u_2$
	\bl{\begin{align}
			\nonumber{\mathcal{A}}_{DG}(u_1,w)-{\mathcal{A}}_{DG}(u_2,w)&\ge  \frac{\nu}{2}|\!|\!| w|\!|\!|^2 +\frac{\beta}{4}(\|{u}^{\delta}_1w\|_{\L^2}^2+\|u_2^{\delta}w\|_{\L^2}^2)\nonumber\\
			&\quad+\left(\beta\gamma-C(\beta,\gamma,\delta) - C(\alpha,\nu)\Big(\|u_1\|^{\frac{8\delta}{4-d}}_{\L^{4\delta}}+\|u_2\|^{\frac{8\delta}{4-d}}_{\L^{4\delta}}\Big)\right)\|w\|_{\L^2}^2,\label{3.renon1}\\\nonumber
			\text{or~~} 
			{\mathcal{A}}_{DG}(u_1,w) - {\mathcal{A}}_{DG}(u_2,w)&\geq   \frac{\nu}{2}|\!|\!| w|\!|\!|^2+ \bigg(\frac{\beta}{4}-\frac{2^{2\delta}\alpha^2}{4\nu(\delta+1)^2}\bigg)(\|u_1^{\delta}w\|_{\L^2}^2+\|u_2^{\delta}w\|_{\L^2}^2)\\&\quad + \bigg(\beta\gamma -\frac{\beta}{2}2^{2\delta}(1+\gamma)^2(\delta+1)^2\bigg) \|w\|_{\L^2}^2,\label{3.renon2}
	\end{align}}
	where  $C (\beta,\gamma,\delta)= \frac{\beta}{2}2^{2\delta}(1+\gamma)^2(\delta+1)^2.$  
\end{lemma}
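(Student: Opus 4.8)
The plan is to expand the difference $\mathcal{A}_{DG}(u_1,w) - \mathcal{A}_{DG}(u_2,w)$ using the definition \eqref{3.ADGf} into its three constituent pieces—the diffusion term $\nu[a_{DG}(u_1,w)-a_{DG}(u_2,w)]$, the advection term $\alpha[b_{DG}(u_1,u_1,w)-b_{DG}(u_2,u_2,w)]$, and the reaction term $-\beta[(c(u_1),w)-(c(u_2),w)]$—and then estimate each from below. For the diffusion term, since $a_{DG}$ is bilinear in its two arguments, $a_{DG}(u_1,w)-a_{DG}(u_2,w) = a_{DG}(w,w)$, and coercivity \eqref{3.acor} gives $a_{DG}(w,w)\geq C_c|\!|\!| w|\!|\!|^2$; I will absorb the $\nu/2|\!|\!| w|\!|\!|^2$ needed for the statement out of this and keep the leftover as a nonnegative contribution (or just bound $\nu a_{DG}(w,w)\geq \nu|\!|\!| w|\!|\!|^2 \geq \tfrac{\nu}{2}|\!|\!| w|\!|\!|^2$, whichever is cleaner, noting the $\tfrac{\nu}{2}$ that gets consumed by Lemma~\ref{2.crnclem111} below).

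Next, for the advection term, I will invoke Lemma~\ref{2.crnclem111} directly: writing $-\alpha[b_{DG}(u_1,u_1,w)-b_{DG}(u_2,u_2,w)]$ and using whichever of the two displayed bounds matches the conclusion we want. For \eqref{3.renon1} I would use the first bound, producing $-\tfrac{\nu}{2}|\!|\!| w|\!|\!|^2 - C(\alpha,\nu)(\|u_1\|_{\L^{4\delta}}^{8\delta/(4-d)}+\|u_2\|_{\L^{4\delta}}^{8\delta/(4-d)})\|w\|_{\L^2}^2$ as a lower bound for the advective contribution to the difference; for \eqref{3.renon2} I would use the second bound, which generates the $-\tfrac{2^{2\delta}\alpha^2}{4\nu(\delta+1)^2}(\|u_1^\delta w\|_{\L^2}^2+\|u_2^\delta w\|_{\L^2}^2)$ terms (after absorbing them against the $\tfrac{\beta}{4}$ coming from the reaction term). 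Note: the Lemma statement writes $\|u_i^\delta w\|_{\L^2}$ without the square in one place, which I would treat as the square $\|u_i^\delta w\|_{\L^2}^2$ consistent with the $\|\nabla_h w\|$-times-$\|u_i^\delta w\|$ product structure and Young's inequality in \eqref{2.30}.

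The remaining—and most delicate—step is the reaction term $-\beta[(c(u_1),w)-(c(u_2),w)]$ with $c(u)=u(1-u^\delta)(u^\delta-\gamma) = (1+\gamma)u^{\delta+1} - \gamma u - u^{2\delta+1}$. I will split it as $-\beta(1+\gamma)(u_1^{\delta+1}-u_2^{\delta+1},w) + \beta\gamma(u_1-u_2,w) + \beta(u_1^{2\delta+1}-u_2^{2\delta+1},w)$. The middle term gives exactly $\beta\gamma\|w\|_{\L^2}^2$. The last term is sign-favorable: using the elementary monotonicity-type inequality $(a^{2\delta+1}-b^{2\delta+1})(a-b)\geq 0$ pointwise (odd power), $\beta(u_1^{2\delta+1}-u_2^{2\delta+1},w)\geq 0$, so it can be dropped—or, to get the $\tfrac{\beta}{4}(\|u_1^\delta w\|_{\L^2}^2+\|u_2^\delta w\|_{\L^2}^2)$ terms appearing in the conclusion, I will keep a quantitative lower bound of the form $(u_1^{2\delta+1}-u_2^{2\delta+1})(u_1-u_2)\geq c_\delta(u_1^{2\delta}+u_2^{2\delta})(u_1-u_2)^2$, i.e. $\gtrsim u_1^{2\delta}w^2 + u_2^{2\delta}w^2$ pointwise, then integrate. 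The troublesome term is $-\beta(1+\gamma)(u_1^{\delta+1}-u_2^{\delta+1},w)$, which has no definite sign; here I will apply Taylor/mean-value expansion $u_1^{\delta+1}-u_2^{\delta+1} = (\delta+1)\theta^\delta w$ for some intermediate $\theta$ between $u_1,u_2$, bound $|\theta|^\delta \lesssim |u_1|^\delta+|u_2|^\delta$, then use Cauchy–Schwarz and Young's inequality to split $\beta(1+\gamma)(\delta+1)\int(|u_1|^\delta+|u_2|^\delta)|w|^2 \leq \tfrac{\beta}{4}(\|u_1^\delta w\|_{\L^2}^2+\|u_2^\delta w\|_{\L^2}^2) + C(\beta,\gamma,\delta)\|w\|_{\L^2}^2$ with $C(\beta,\gamma,\delta)=\tfrac{\beta}{2}2^{2\delta}(1+\gamma)^2(\delta+1)^2$ matching the stated constant. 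Collecting the diffusion bound ($\tfrac{\nu}{2}|\!|\!| w|\!|\!|^2$ surviving after Lemma~\ref{2.crnclem111} eats the other half), the advection bound, the $\tfrac{\beta}{4}(\|u_1^\delta w\|_{\L^2}^2+\|u_2^\delta w\|_{\L^2}^2)$ from the $u^{2\delta+1}$ piece minus what the $u^{\delta+1}$ piece borrows, plus $(\beta\gamma - C(\beta,\gamma,\delta) - (\text{advection }\L^{4\delta}\text{ factor}))\|w\|_{\L^2}^2$ yields \eqref{3.renon1}; using the second option of Lemma~\ref{2.crnclem111} and the cruder split of the cross term gives \eqref{3.renon2}. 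The main obstacle is bookkeeping the constants so that exactly $\tfrac{\beta}{4}$ is left on the $\|u_i^\delta w\|_{\L^2}^2$ terms and the $C(\beta,\gamma,\delta)$ is the stated one; the analytic content (pointwise monotonicity of $t\mapsto t^{2\delta+1}$ and Taylor expansion of $t\mapsto t^{\delta+1}$) is routine.
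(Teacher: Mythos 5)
Your proposal is correct and is essentially the argument the paper defers to (\cite[Lemma 2.4]{GBHE2}, mirrored in the paper's own computations in Lemma \ref{3.A1}): split $\mathcal{A}_{DG}$ into diffusion, advection and reaction parts, use coercivity of $a_{DG}$, invoke Lemma \ref{2.crnclem111} for the advection difference, and treat the reaction term via the monotonicity of $t\mapsto t^{2\delta+1}$ together with a Taylor/Young splitting of the $(1+\gamma)u^{\delta+1}$ contribution. The only steps worth making explicit are the normalization of the coercivity constant (the penalty $\mathfrak{K}$ is taken large enough that $\nu a_{DG}(w,w)\ge \nu|\!|\!| w|\!|\!|^2$, so that $\tfrac{\nu}{2}|\!|\!| w|\!|\!|^2$ survives after Lemma \ref{2.crnclem111} absorbs the other half) and the pointwise inequality $(a^{2\delta+1}-b^{2\delta+1})(a-b)\ge \tfrac12\,(a^{2\delta}+b^{2\delta})(a-b)^2$, both of which indeed hold and yield constants no worse than the stated $C(\beta,\gamma,\delta)$.
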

\begin{proof}
	The proof closely parallels the approach outlined in \cite[Lemma 2.4]{GBHE2}.
\end{proof}
\bl{
	\begin{remark}\label{rhs}
		\begin{enumerate}
			\item Using the Sobolev embedding \cite[cf. Sec 4]{BOr} and the stability estimate \cite[Lemma 2.12]{GBHE2}, we obtain, for $1 \leq q < \infty$ in $2D$ and for $1 \leq q \leq 6$ in $3D$:
			\begin{align}\label{1qinf}
				\|u_h\|_{\L^q} \leq C |\!|\!| u_h|\!|\!| \leq C \|f\|_{\L^2}, \quad \|u_h^*\|_{\L^q} \leq C|\!|\!| u_h^*|\!|\!| \leq C|\!|\!| u|\!|\!| \leq C \|f\|_{\L^2}.
			\end{align}
			\item For $q = \infty$, using the triangle inequality, an approach similar to \cite[Lemma 14]{VAC}, the interpolation result \cite[Lemma 4.4.1]{BSc}, and the Sobolev embedding, we obtain:
			\begin{align}\label{qinf}
				\no\|u_h^*\|_{\L^{\infty}} \leq \|u_h^* -\Pi_h u \|_{\L^{\infty}} + \|\Pi_hu - u\|_{\L^{\infty}} + \|u\|_{\L^{\infty}} &\leq C ( h^{k+1-\frac{d}{2}}\|u\|_{\H^{k+1}} + \|u\|_{\H^2})\\&\leq C( h^{k+1-\frac{d}{2}}\|u\|_{\H^{k+1}} + \|f\|_{\L^2}),
			\end{align}
			where $C$ is a constant independent of $h$.
			\item A similar estimate to $\eqref{1qinf}$ can be extended for $1 \leq q \leq \infty$ in case of $3D$ by employing an approach analogous to the one used for $\eqref{qinf}$ in a bounded domain. This extension follows from the embedding $\L^\infty(\Omega) \subset \L^p(\Omega)$ for $1 \leq p < \infty$.
		\end{enumerate}
	\end{remark}
}
\subsection{$\L^2$ Error estimates for SGBHE}
In establishing optimal $\L^2$ error estimates for the SGBHE, the pivotal role is played by the following lemma.
\bl{\begin{lemma}\label{3.A1}
		Let $u$ be the solution of \eqref{3.SGBHEweak} and $u_h\in V_h$ be the discrete solution of \eqref{3.SGBHE}, and set $\xi = u_h^*-u_h$, where $u_h^*$ is the $A_h$ projection defined in \eqref{3.Aproj}. Then the following estimate holds
		\bl{\begin{align}
				\left(\frac{\nu}{2C_{\Omega}}+\beta\gamma-C(\beta,\alpha,\delta) - C(\alpha,\nu)\|f\|^{\frac{8\delta}{4-d}}_{\L^{2}}\right)\|\xi\|_{\L^2}^2\leq C(\alpha,\delta,\nu,\gamma,\|f\|_{\L^{2}})\|\omega\|_{\L^2}^2 ,
		\end{align}}
		where, $C(\alpha,\delta,\nu,\gamma,\|f\|_{\L^{2}}) = \frac{2^{2\delta}\alpha^2\delta^2}{\nu(\delta+2)^2}\|{f}\|_{\L^{2}}^{2\delta}+ \frac{1}{\nu}2^{2\delta}\beta^2(1+\gamma)^2(\delta+1)^2\|f\|^{2\delta}_{\L^{2}}+  2\beta\gamma +\frac{2^{4\delta}C_{\Omega}^2(2\delta+1)^2\beta^2}{\nu}\|f\|_{\L^{2}}^{4\delta},$ and $C_{\Omega}$ is the Poincar\'e constant. 
\end{lemma}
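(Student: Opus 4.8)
The plan is to sandwich $\mathcal A_{DG}(u_h^\ast,\xi)-\mathcal A_{DG}(u_h,\xi)$ between the coercivity/monotonicity bound of \autoref{lemma3.2} from below and an $\|\omega\|_{\L^2}$--bound from above, and then compare. First I would apply the first inequality \eqref{3.renon1} of \autoref{lemma3.2} with $u_1=u_h^\ast$, $u_2=u_h$ and $w=\xi=u_h^\ast-u_h$; discarding the nonnegative term $\tfrac\beta4\big(\|(u_h^\ast)^\delta\xi\|_{\L^2}^2+\|u_h^\delta\xi\|_{\L^2}^2\big)$, applying the discrete Poincar\'e inequality $\|\xi\|_{\L^2}^2\le C_\Omega\,|\!|\!| \xi|\!|\!|^2$ to the term $\tfrac\nu2|\!|\!| \xi|\!|\!|^2$, and using $\|u_h\|_{\L^{4\delta}},\|u_h^\ast\|_{\L^{4\delta}}\le C\|f\|_{\L^2}$ from \eqref{1qinf} (absorbing mesh--independent constants), this yields
\begin{align*}
\Big(\tfrac{\nu}{2C_\Omega}+\beta\gamma-C(\beta,\gamma,\delta)-C(\alpha,\nu)\|f\|_{\L^2}^{\frac{8\delta}{4-d}}\Big)\|\xi\|_{\L^2}^2\le \mathcal A_{DG}(u_h^\ast,\xi)-\mathcal A_{DG}(u_h,\xi),
\end{align*}
with $C(\beta,\gamma,\delta)$ the constant from \autoref{lemma3.2}.

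Next I would rewrite the right--hand side. With $f_h=\Pi_h f$ one has $\mathcal A_{DG}(u_h,\xi)=\langle f_h,\xi\rangle=(f,\xi)$ since $\xi\in V_h$. Because $u\in\H^2(\Omega)\cap\H_0^1(\Omega)$ by \autoref{2M.thm33}, the jump contributions of $a_{DG}(u,\xi)$ involving $u$ vanish and element--wise integration by parts gives $\nu a_{DG}(u,\xi)=(-\nu\Delta u,\xi)=(f,\xi)-\alpha\big(u^\delta\sum_{i=1}^d\partial_i u,\xi\big)+\beta(c(u),\xi)$ for all $\xi\in V_h$, the last equality being the strong form of \eqref{3.SGBHEweak}. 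Since $\xi\in V_h\cap\L^\infty(\Omega)$ is an admissible test function in \eqref{3.Aproj}, $a_{DG}(u_h^\ast,\xi)=a_{DG}(u,\xi)$, so expanding $\mathcal A_{DG}$ via \eqref{3.ADGf} cancels the diffusion terms and, together with the consistency identity $b_{DG}(u,u,\xi)=\big(u^\delta\sum_{i=1}^d\partial_i u,\xi\big)$ (valid for $u$ smooth with $u|_{\partial\Omega}=0$), leaves
\begin{align*}
\mathcal A_{DG}(u_h^\ast,\xi)-\mathcal A_{DG}(u_h,\xi)=\alpha\big[b_{DG}(u_h^\ast,u_h^\ast,\xi)-b_{DG}(u,u,\xi)\big]-\beta\big(c(u_h^\ast)-c(u),\xi\big).
\end{align*}

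It then remains to estimate the two nonlinear differences by $\|\omega\|_{\L^2}$, recalling $\omega=u-u_h^\ast$. For the reaction term I would write $c(s)=(1+\gamma)s^{\delta+1}-\gamma s-s^{2\delta+1}$ and use the elementary bound $|s^m-t^m|\le m\max(|s|,|t|)^{m-1}|s-t|$ together with the $\L^\infty$ bounds $\|u\|_{\L^\infty},\|u_h^\ast\|_{\L^\infty}\le C\|f\|_{\L^2}$ from \eqref{qinf}; Cauchy--Schwarz and Young's inequality then produce the contributions $\big(\tfrac{1}{\nu}2^{2\delta}\beta^2(1+\gamma)^2(\delta+1)^2\|f\|_{\L^2}^{2\delta}+2\beta\gamma+\tfrac{2^{4\delta}C_\Omega^2(2\delta+1)^2\beta^2}{\nu}\|f\|_{\L^2}^{4\delta}\big)\|\omega\|_{\L^2}^2$ plus a small multiple of $\|\xi\|_{\L^2}^2$. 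The convection difference is handled as in the proofs of \autoref{2.crnclem111} and \autoref{lemma3.2}: split off the difference of the advective coefficients $(u_h^\ast)^\delta-u^\delta$ via Taylor's formula, use the skew--symmetry $b_{DG}(\mathbf w,p,q)=-b_{DG}(\mathbf w,q,p)$ to keep the gradient away from $\omega$, and apply Hölder's inequality with the $\L^q$/$\L^\infty$ bounds of \autoref{rhs}; this gives $\alpha\big[b_{DG}(u_h^\ast,u_h^\ast,\xi)-b_{DG}(u,u,\xi)\big]\le \tfrac{2^{2\delta}\alpha^2\delta^2}{\nu(\delta+2)^2}\|f\|_{\L^2}^{2\delta}\|\omega\|_{\L^2}^2$ plus again a small multiple of $\|\xi\|_{\L^2}^2$. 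Moving all the $\|\xi\|_{\L^2}^2$ terms to the left then yields the assertion.

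The main obstacle is the convection estimate: the statement carries only $\|\omega\|_{\L^2}$, not $|\!|\!| \omega|\!|\!|$, so the gradient occurring in $b_{DG}$ must be transferred onto $\xi$ (or onto the smooth factor $u$), which forces a joint use of the skew--symmetric structure of $b_{DG}$ and the $\L^\infty$--stability of $u_h^\ast$ — the $h$--dependent term in \eqref{qinf} being harmless for $k\ge1$ and $d=2,3$ — while carefully tracking which fractions of $\tfrac\nu2|\!|\!| \xi|\!|\!|^2$ and of $\beta\gamma\|\xi\|_{\L^2}^2$ are consumed by each Young splitting; this bookkeeping is precisely what fixes the constants displayed in the statement.
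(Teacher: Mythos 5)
Your proposal is correct and follows essentially the same route as the paper: the paper likewise combines Galerkin orthogonality with the $A_h$-projection property to reduce to the identity $\mathcal{A}_{DG}(u_h^*,\xi)-\mathcal{A}_{DG}(u_h,\xi)=\alpha\big[b_{DG}(u_h^*,u_h^*,\xi)-b_{DG}(u,u,\xi)\big]-\beta\big(c(u_h^*)-c(u),\xi\big)$, bounds the left side from below by \eqref{3.renon1}, estimates the nonlinear differences via Taylor's formula, H\"older and Young together with the $\L^{4\delta}$/$\L^\infty$ stability bounds of Remark \ref{rhs}, and finishes with the Poincar\'e inequality. Your passage through the strong form is just an explicit version of the DG consistency the paper invokes when subtracting \eqref{3.SGBHEweak} from \eqref{3.SGBHE}, and your closing remark about reserving fractions of $\tfrac{\nu}{2}|\!|\!|\xi|\!|\!|^2$ for the Young splittings (rather than converting it all to $\|\xi\|_{\L^2}^2$ at the outset) is precisely how the paper's absorption argument is organized.
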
}
\begin{proof}
	Subtract \eqref{3.SGBHEweak} from \eqref{3.SGBHE}, we have 
	\begin{align*}
		&\mathcal{A}_{DG} (u,\chi) - 	\mathcal{A}_{DG} (u_h,\chi)=\nu a_{DG}(u-u_h,\chi) + \alpha (b_{DG}(u,u,\chi) - b_{DG}(u_h,u_h,\chi))-\beta(c(u) - c(u_h),\chi)= 0
	\end{align*} 
	By rearranging the preceding equation and utilizing \eqref{3.Aproj}, while setting $\chi= \xi = u_h-u_h^*$, we obtain:
	\begin{align}
		\nu a_{DG}(\xi,\xi) + &\alpha (b_{DG}(u_h^*,u_h^*,\xi) - b_{DG}(u_h,u_h,\xi))-\beta(c(u_h^*)- c(u_h) ,\xi)\\&=\alpha (b_{DG}(u_h^*,u_h^*,\xi) - b_{DG}(u,u,\xi))-\beta(c(u_h^*)- c(u),\xi).
	\end{align} 
	Using the estimate \eqref{3.renon1} obtained in Lemma \ref{lemma3.2}, we achieve 
	\begin{align}\label{3.ap4}
		&\frac{\nu}{2}|\!|\!| \xi|\!|\!|^2 +\frac{\beta}{4}(\|{u}^{\delta}_h\xi\|_{\L^2}^2+\|(u_h^*)^{\delta}\xi\|_{\L^2}^2)\nonumber
		+\left(\beta\gamma-C(\beta,\alpha,\delta) - C(\alpha,\nu)\Big(\|u_h\|^{\frac{8\delta}{4-d}}_{\L^{4\delta}}+\|u_h^*\|^{\frac{8\delta}{4-d}}_{\L^{4\delta}}\Big)\right)\|\xi\|_{\L^2}^2\\&\leq \alpha (b_{DG}(u_h^*,u_h^*,\xi) - b_{DG}(u,u,\xi))-\beta(c(u_h^*) - c(u),\xi).
	\end{align}
	Let us  now estimate $J_1 = b_{DG}(u_h^*,u_h^*,\xi) - b_{DG}(u,u,\xi)$, using Taylor's formula, Hölder's inequality, and the bound on the discrete upwind term \cite{TKM} as
	\begin{align*}
		\nonumber	|J_1|
		&= -\frac{2\alpha}{\delta+2}\sum_{K\in \mathcal{T}_h}\sum_{i=1}^d  \left(\int_K\left((u_h^*)^{\delta}\frac{\partial u_h^*}{\partial{x_i}}-u^{\delta}\frac{\partial u}{\partial{x_i}}\right) \xi \mathrm{~d}x- \int_K((u_h^*)^{\delta+1}-u^{\delta+1})\frac{\partial \xi}{\partial{x_i}} \mathrm{d}x\right) \\&\quad+ \sum_{K\in \mathcal{T}_h}\left( \int_{\partial K} \left(\hat{\mathbf{u^*}}_{h,u^*_h}^{up}-\hat{\mathbf{u}}_{u}^{up}\right){\xi} \mathrm{~d}s - \int_{\partial K} \hat{\mathbf{\xi}}_{\xi}^{up}(u_h^*-u) \mathrm{~d}s \right)
		\\\nonumber
		& \leq \frac{2C\alpha\delta}{(\delta+2)}\Big(\|u_h^*\|^{\delta}_{\L^{\infty}} + \|u\|^{\delta}_{\L^{\infty}} \Big)\|\omega\|_{\L^{2}}\|\nabla_h \xi\|_{\L^2(\mathcal{T}_h)}
		\\& \leq{\frac{2^{2\delta}C\alpha^2\delta^2}{\nu(\delta+2)^2}\left(\|{u_h^*}\|_{\L^{\infty}}^{2\delta}+\|{u}\|_{\L^{\infty}}^{2\delta}\right)\|\omega\|_{\L^2}^2+\frac{\nu}{8}|\!|\!| \xi|\!|\!|^2,}
	\end{align*}
	The term $J_2$ can be rewritten as 
	\begin{align}
		&J_2 = \beta\left[(u_h^*(1-(u_h^*)^{\delta})((u_h^*)^{\delta}-\gamma)-u(1-u^{\delta})(u^{\delta}-\gamma),\xi)\right] \\&=  2\beta(1+\gamma)({(u_h^*)}^{\delta+1}-u^{\delta+1},\xi) -2\beta\gamma(\omega,\xi) -2\beta({(u_h^*)}^{2\delta+1}-u^{2\delta+1},\xi).
	\end{align} 
	Let us rewrite,	$J_2= J_3 + J_4 + J_5,$ where 
	\begin{align*}
		J_3 = 2\beta(1+\gamma)((u_h^*)^{\delta+1}-u^{\delta+1},\xi),\quad  J_4 = -2\beta\gamma(\omega,\xi),\quad J_5 = -2\beta((u_h^*)^{2\delta+1}-u^{2\delta+1},\xi).
	\end{align*}
	The term $J_3$ can be initially estimated through Taylor's formula, followed by H\"older's inequality, and ultimately Young's inequalities, resulting in:
	\begin{align}
		|J_3|&=2\beta(1+\gamma)(\delta+1)((\theta u_h^*+(1-\theta)u)^{\delta}\omega,\xi) 
		\nonumber\\&\leq \frac{1}{\nu}2^{2\delta}\beta^2(1+\gamma)^2(\delta+1)^2\left(\|{u_h^*}\|^{2\delta}_{\L^{4\delta}}+\|{u}\|^{2\delta}_{\L^{4\delta}}\right)\|\omega\|^2_{\L^2} +\frac{\nu}{8}|\!|\!| \xi|\!|\!|^2.
	\end{align}
	To estimate $J_4$, we employ the Cauchy-Schwarz inequality, followed by Young's inequality, yielding:
	\begin{align}
		|J_4|&\leq 2\beta\gamma\|\omega\|_{\L^2}\|\xi\|_{\L^2}\leq 2\beta\gamma\|\omega\|_{\L^2}^2+\frac{\beta\gamma}{2}\|\xi\|_{\L^2}^2.
	\end{align}
	Utilizing Taylor's formula, H\"older's inequality, Young's inequality, and the discrete Sobolev embedding, we estimate $J_5$ as:
	\begin{align}\label{2.7p11}
		|J_5|&=-2(2\delta+1)\beta\left((\theta u_h^*+(1-\theta)u)^{2\delta}\omega,\xi\right)
		\nonumber\\&\leq  2^{2\delta}(2\delta+1)\beta\left(\|u_h^*\|_{\L^{\infty}}^{2\delta}+\|u\|_{\L^{\infty}}^{2\delta}\right)\|\omega\|_{\L^{2}}\|\xi\|_{\L^{2}}	
		\nonumber\\&\leq{\frac{\nu}{8}|\!|\!| \xi|\!|\!|^2
			+ \frac{2^{4\delta}C_{\Omega}^2(2\delta+1)^2\beta^2}{\nu}\left(\|u_h^*\|_{\L^{\infty}}^{4\delta}+\|u\|_{\L^{\infty}}^{4\delta}\right)\|\omega\|_{\L^2}^2.}
	\end{align}
	Consolidating the aforementioned estimates, we obtain:
	\begin{align}
		&\frac{\nu}{2}|\!|\!| \xi|\!|\!|^2_{DG} +\frac{\beta}{4}(\|{u}^{\delta}_h\xi\|_{\L^2}^2+\|(u^*)^{\delta}\xi\|_{\L^2}^2)\nonumber
		+
		\left(\beta\gamma-C(\beta,\alpha,\delta) - C(\alpha,\nu)\Big(\|u_h\|^{\frac{8\delta}{4-d}}_{\L^{4\delta}}+\|u_h^*\|^{\frac{8\delta}{4-d}}_{\L^{4\delta}}\Big)\right)\|\xi\|_{\L^2}^2\\&\leq\bigg( \frac{2^{2\delta}\alpha^2\delta^2}{\nu(\delta+2)^2}\left(\|{u_h^*}\|_{\L^{\infty}}^{2\delta}+\|{u}\|_{\L^{\infty}}^{2\delta}\right) + \frac{1}{\nu}2^{2\delta}\beta^2(1+\gamma)^2(\delta+1)^2\left(\|{u_h^*}\|^{2\delta}_{\L^{4\delta}}+\|{u}\|^{2\delta}_{\L^{4\delta}}\right) \\&\quad+  2\beta\gamma +\frac{2^{4\delta}C_{\Omega}^2(2\delta+1)^2\beta^2}{\nu}\left(\|u_h^*\|_{\L^{\infty}}^{4\delta}+\|u\|_{\L^{\infty}}^{4\delta}\right)\bigg)\|\omega\|_{\L^2}^2 .
	\end{align}
	Applying the Poincar\'{e} inequality to the first term and \bl{using Remark \ref{rhs}} yields the desired outcome.
\end{proof}
\bl{	\begin{remark}
		\begin{enumerate}
			\item 
			The estimates discussed in Lemma \ref{3.A1} have been obtained under the stringent condition on the regularity of the given data $f$. In the estimate \eqref{3.ap4}, instead of using \eqref{3.renon2}, applying \eqref{3.renon1} transforms the condition outlined in Lemma \ref{3.A1}, making it dependent solely on the parameters. This can be expressed as: For 
			\begin{align}\label{3.appara}
				\nu \geq \max\left\{ \frac{4^{\delta}\alpha^2}{\beta(\delta+1)^2}, C_{\Omega}\left(2C_1 + \beta\left[ 4^{\delta}(1+\gamma)^2(1+\delta)^2 - 2\gamma \right] \right) \right\},
			\end{align} we have 
			\begin{align}\label{5.2}
				\|u_h^*-u_h\|_{\L^2}^2\leq C\|\omega\|_{\L^2}^2,
			\end{align}
			which is analogous to the condition established for the well posedness and error estimates in energy norm  of the SGBHE as discussed in \cite[cf. (2.13)]{KMR}.
			\item In the case of time dependence, the $\L^2$ error estimates can be demonstrated without imposing any constraints on the parameters, as established in Theorem \ref{3.L2sd}.
		\end{enumerate}
\end{remark}}
\bl{Finally, the $\L^2$ error estimates for the SGBHE are presented in the following theorem.
	\begin{theorem}\label{3.apth1}
		Assuming \eqref{3.appara} holds, for $u\in \H^{k+1}(\Omega)$ with $k\geq 1$ the error estimates under $\L^2$-norm is given by
		\begin{align}
			\|u-u_h\|_{\L^2}\leq C h^{k+1}\|u\|_{\H^{k+1}},
		\end{align}
		where $C$ is a constant independent of $h$.
\end{theorem}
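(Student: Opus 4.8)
The plan is to build the estimate out of the error splitting that is already in place, namely $e = u - u_h = \omega + \xi$ with $\omega = u - u_h^*$ and $\xi = u_h^* - u_h$, and then bound each piece with a tool that has already been proved. By the triangle inequality it suffices to show $\|\omega\|_{\L^2} \le C h^{k+1}\|u\|_{\H^{k+1}}$ and $\|\xi\|_{\L^2} \le C h^{k+1}\|u\|_{\H^{k+1}}$ separately, with $C$ independent of $h$.

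For the first term I would simply invoke \autoref{3.Lemma 3.5} with $p = k$ (admissible since $u\in\H^{k+1}(\Omega)$ and $k\ge 1$), which gives $\|\omega\|_{\L^2}\le C h^{k+1}|u|_{\H^{k+1}}\le C h^{k+1}\|u\|_{\H^{k+1}}$. For the second term I would use that assumption \eqref{3.appara} is precisely the hypothesis under which the prefactor of $\|\xi\|_{\L^2}^2$ on the left-hand side of the estimate in \autoref{3.A1} — once \eqref{3.renon2} is replaced by \eqref{3.renon1} in \eqref{3.ap4}, as explained in the remark following \autoref{3.A1} — is bounded below by a strictly positive constant depending only on the data. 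Dividing through then yields \eqref{5.2}, i.e. $\|\xi\|_{\L^2}^2 \le C\|\omega\|_{\L^2}^2$, and combining this with the bound on $\omega$ just obtained gives $\|\xi\|_{\L^2}\le C h^{k+1}\|u\|_{\H^{k+1}}$. Adding the two contributions completes the proof.

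The only step that is not pure bookkeeping is the passage from \autoref{3.A1} to \eqref{5.2}: one must check that, under \eqref{3.appara}, the parenthesized coefficient multiplying $\|\xi\|_{\L^2}^2$ is indeed positive and bounded away from zero. This uses the a priori bounds $\|u_h\|_{\L^q},\|u_h^*\|_{\L^q}\le C\|f\|_{\L^2}$ collected in \autoref{rhs} to absorb the solution-dependent terms $\|u_h\|_{\L^{4\delta}}^{8\delta/(4-d)}$ and $\|u_h^*\|_{\L^{4\delta}}^{8\delta/(4-d)}$ into a constant; \eqref{3.appara} is calibrated so that the remaining parameter combination $\tfrac{\nu}{2C_\Omega}+\beta\gamma - C(\beta,\alpha,\delta) - C(\alpha,\nu)\|f\|_{\L^2}^{8\delta/(4-d)}$ (equivalently the parameter-only version obtained via \eqref{3.renon1}) stays strictly positive. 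One should also note in passing that the constant $C$ produced in \eqref{5.2}, and hence in the final estimate, depends only on the problem parameters, on $\|f\|_{\L^2}$, on the Poincaré constant $C_\Omega$, and on the constants in the projection estimates \eqref{3.aproj1}–\eqref{3.aproj2}, so it carries no hidden $h$-dependence; this is what makes the resulting $\L^2$ rate $h^{k+1}$ optimal.
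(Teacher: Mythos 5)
Your proposal is correct and follows essentially the same route as the paper: the paper's proof is exactly the triangle inequality applied to the splitting $u-u_h=\omega+\xi$, with $\|\omega\|_{\L^2}$ controlled by Lemma~\ref{3.Lemma 3.5} and $\|\xi\|_{\L^2}$ controlled via \eqref{5.2}, which is the consequence of Lemma~\ref{3.A1} under the parameter condition \eqref{3.appara}. Your additional verification that the coefficient of $\|\xi\|_{\L^2}^2$ stays positive (using the stability bounds of Remark~\ref{rhs}) is precisely the content the paper delegates to the remark preceding the theorem, so nothing is missing.
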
}
\begin{proof}
	By employing the triangle inequality, along with \eqref{5.2} and Lemma \ref{3.Lemma 3.5} , the proof
	readily follows.
\end{proof}

\subsection{A posteriori error estimates}
The residuals linked to each element $K \in \mathcal{T}_h$ and edge $E \in \mathcal{E}_h$ in the a posteriori error estimates for the SGBHE provided in \eqref{3.SGBHE} are precisely specified as follows:
\begin{itemize}
	\item Element-wise residual, 
	$
	R_K := \{f_h+\nu\Delta u_h- \alpha u_h^{\delta}\sum\limits_{i=1}^d\frac{\partial u_h}{\partial x_i}+\beta u_h(1-u_h^{\delta})(u_h^{\delta}-\gamma)\}|_K,
	$
	\item Edge-wise residual, 
	$R_E:= \begin{cases}\frac{1}{2} [\![(\nu\nabla_h u_h)\cdot\n]\!] & \text { for } E \in \mathcal{E}^i_h, \\
		0 & \text { for } E \in \mathcal{E}_h^{\partial}.\end{cases}$
\end{itemize}
Moreover, we introduce the element-wise error  estimator as
$\zeta_K^2=\zeta_{R_K}^2+\zeta_{E_K}^2+\zeta_{J_K}^2$, where 
\begin{align}\label{3.sgbheest}
	\zeta_{R_K}^2:=h_K^2\|R_K\|_{\L^2(K)}^2,\quad
	\zeta_{E_K}^2:=\sum\limits_{E \in \mathcal{E}_h} h_E\left\|R_E\right\|_{\L^2(E)}^2,\quad
	\zeta_{J_K}^2:=\sum\limits_{E\in  \mathcal{E}_h} \mathfrak{K}_h\left\|[\![ u_h]\!]\right\|_{\L^2(E)}^2.
\end{align}
Note that if $V_h$ is the conforming space, then $\zeta_{J_K}^2 = 0$. Consequently, the global a posteriori error estimator and the data approximation term for the system \eqref{3.SGBHE} are expressed as follows:
\begin{align}\label{3.estimator}
	\zeta=\left(\sum_{K \in \mathcal{T}_h} \zeta_K^2\right)^{1 / 2}, \quad \mathcal{F} = \left(\sum_{K \in \mathcal{T}_h} h_K^2\|f-f_h\|^2_{\L^2(K)}\right)^{1 / 2}. 
\end{align}
\subsubsection{Reliability}
Let us rewrite the discrete operator $a_{DG}$ as, $a_{DG}= a^c_{DG} + N_{DG}$, where 
\begin{align}
	\label{3.adg1.1} a^c_{DG}({u},{v})=(\nabla_h {u}, \nabla_h {v})
	+\sum_{E\in\mathcal{E}_h}\int_E\mathfrak{K}_h [\![{u}]\!]\!\cdot\![\![{v}]\!]\mathrm{~d}{s},\\
	\label{3.adg1.2}N_{DG}(u,v) = -\bigg(\sum_{E\in\mathcal{E}_h}\int_E {\{\!\{\nabla_h {u}\}\!\}}\!\cdot\![\![{v}]\!]\mathrm{~d}{s}+\sum_{E\in\mathcal{E}_h}\int_E {\{\!\{\nabla_h {v}\}\!\}}\!\cdot\![\![{u}]\!]\mathrm{~d}{s}\bigg),
\end{align}
for $u, v \in V_h.$  Further, we define
\begin{align}\label{3.form3}
	\tilde{\mathcal{A}}_{DG}(u,v) := \nu a^c_{DG}(u,v) + \alpha b_{DG}(u,v) -\beta(c(u),v). 
\end{align}
\begin{remark}
	\begin{enumerate}
		Based on the formulations defined in \eqref{3.ADGf} and \eqref{3.form3}, we observe that
		\item For any $u, v \in V_h$, it holds that $ \mathcal{A}_{DG}(u,v) = \tilde{\mathcal{A}}_{DG}(u,v) + N_{DG}(u,v).$ 
		\item If we assume $u, v \in \H_0^1(\Omega)$, we find that $ \tilde{\mathcal{A}}_{DG}(u,v)= \mathcal{A}(u,v). $
	\end{enumerate}
\end{remark}
\bl{To establish the reliability}, we employ Cl\'ement interpolation~\cite[Lemma 4.6]{SZh}, defined as:
\begin{align} \label{3.clm}
	I_h: \H_0^1(\Omega)\rightarrow \big\{\phi\in C(\bar{\Omega}): \phi|_K\in \P_1(K),~ \forall~~ K\in \mathcal{T}_h, ~ \phi = 0 \text{~~on} ~~~\partial\Omega\big\},
\end{align}
with 	$|\!|\!| I_h\chi|\!|\!|\leq C	|\!|\!| \chi|\!|\!|$.
Additionally, the following estimates hold:
\begin{align}
	\bigg(\sum_{K\in \mathcal{T}_h}h_K^{-2}\|\chi-\I_h\chi\|^2_{\L^2(K)}\bigg)^{\frac{1}{2}} \leq C 	|\!|\!| \chi|\!|\!|, \label{3.clmes1}\\
	\bigg(\sum_{E\in \mathcal{E}}h_E^{-1}\|\chi-\I_h\chi\|^2_{\L^2(K)}\bigg)^{\frac{1}{2}} \leq C 	|\!|\!| \chi|\!|\!| \label{3.clmes2}.
\end{align}
\bl{We decompose the DG approximate solution $u_h$ as:  
	\begin{align}\label{3.confsol}  
		u_h = u_h^c + u_h^r,  
	\end{align}  
	where $u_h^c \in V_h^c = V_h \cap \H_0^1(\Omega)$ is the conforming component, and $u_h^r = u_h - u_h^c$ is the remainder.  The conforming part $u_h^c$ is obtained through the nodal averaging operator $\mathfrak{A}_h: V_h \to V_h^c$, such that $\mathfrak{A}u_h = u_h^c$, as defined in \cite[Theorem 2.2 and 2.3]{KPa}. The residual $u_h^r$ satisfies $u_h^r \in V_h$.  This decomposition separates $u_h$ into a conforming contribution $u_h^c$ and a non-conforming residual $u_h^r$.}
\begin{lemma}\label{lemma3.3}
	The remainder term $u_h^r$ can  be estimated as 
	\bl{\begin{align}
			|\!|\!| u_h^r|\!|\!|\leq C \bigg(\sum_{K\in \mathcal{T}_h}\zeta_{J_K}^2\bigg)^{\frac{1}{2}},
	\end{align}}
	where $C$ is independent of $h$.
\end{lemma}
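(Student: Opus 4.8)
The goal is to bound the non-conforming remainder $u_h^r = u_h - u_h^c$ in the DG norm purely by the jump estimator $\bigl(\sum_K \zeta_{J_K}^2\bigr)^{1/2} = \bigl(\sum_{E\in\mathcal{E}_h}\mathfrak{K}_h\|[\![u_h]\!]\|_{\L^2(E)}^2\bigr)^{1/2}$. The plan is to invoke the approximation properties of the nodal averaging operator $\mathfrak{A}_h: V_h \to V_h^c$, which are precisely the content of \cite[Theorem 2.2 and 2.3]{KPa}: for a piecewise polynomial $v_h$, the difference $v_h - \mathfrak{A}_h v_h$ is controlled elementwise and edgewise by the jumps of $v_h$ across interelement edges, namely
\begin{align}
	\sum_{K\in\mathcal{T}_h} h_K^{-2}\|u_h - \mathfrak{A}_h u_h\|_{\L^2(K)}^2 + \sum_{K\in\mathcal{T}_h}\|\nabla_h(u_h - \mathfrak{A}_h u_h)\|_{\L^2(K)}^2 \leq C\sum_{E\in\mathcal{E}_h} h_E^{-1}\|[\![u_h]\!]\|_{\L^2(E)}^2.
\end{align}

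First I would write out the DG norm of $u_h^r = u_h - \mathfrak{A}_h u_h$ from its definition \eqref{3.dgnorm}, splitting it into the broken-gradient part and the jump part:
\begin{align}
	|\!|\!| u_h^r|\!|\!|^2 = \sum_{K\in\mathcal{T}_h}\|\nabla_h u_h^r\|_{\L^2(K)}^2 + \sum_{E\in\mathcal{E}_h}\mathfrak{K}_h\|[\![u_h^r]\!]\|_{\L^2(E)}^2.
\end{align}
Since $u_h^c = \mathfrak{A}_h u_h \in V_h^c \subset \H_0^1(\Omega)$ is conforming, its jumps vanish on every interior edge and it vanishes on $\partial\Omega$, so $[\![u_h^r]\!] = [\![u_h]\!] - [\![u_h^c]\!] = [\![u_h]\!]$ on each $E\in\mathcal{E}_h$. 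Hence the jump contribution to $|\!|\!| u_h^r|\!|\!|^2$ is exactly $\sum_{E}\mathfrak{K}_h\|[\![u_h]\!]\|_{\L^2(E)}^2 = \sum_K \zeta_{J_K}^2$ after accounting for the definition of $\zeta_{J_K}^2$ in \eqref{3.sgbheest}. For the gradient part, I would apply the averaging-operator estimate above, and then convert $h_E^{-1}$ back to $\mathfrak{K}_h = \mathfrak{K}/h_E$ (up to the constant $\mathfrak{K}$, absorbed into $C$) together with shape-regularity ($h_E \simeq h_K$ for $E\subset\partial K$), giving $\sum_K\|\nabla_h u_h^r\|_{\L^2(K)}^2 \leq C\sum_E\mathfrak{K}_h\|[\![u_h]\!]\|_{\L^2(E)}^2 = C\sum_K\zeta_{J_K}^2$. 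Combining the two contributions and taking square roots yields the claimed bound.

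The only genuine subtlety — and the step I would flag as the "main obstacle" — is the bookkeeping around the penalty weight: the hypotheses from \cite{KPa} are naturally phrased with the scaling $h_E^{-1}\|[\![u_h]\!]\|^2$, whereas the estimator $\zeta_{J_K}$ carries the weight $\mathfrak{K}_h = \mathfrak{K}/h_E$, so one must be careful that the constant $C$ in the conclusion is allowed to depend on $\mathfrak{K}$ (which is fine, since $\mathfrak{K}$ is a fixed algorithmic parameter, chosen large for stability but independent of $h$). Beyond that, everything is a direct citation plus shape-regularity, so no further serious work is needed; I would simply remark that $u_h^r = u_h - u_h^c \in V_h$ by construction (difference of two elements of $V_h$, since $V_h^c\subset V_h$), which is the last sentence of the decomposition paragraph and validates that $|\!|\!|\cdot|\!|\!|$ is the right norm to measure it in.
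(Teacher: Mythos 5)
Your proof is correct and follows essentially the same route as the paper: the paper's one-line proof likewise invokes the decomposition $u_h = u_h^c + u_h^r$ together with the averaging-operator approximation estimate (cited there as \cite[Lemma 4.5]{SZh}, the same type of bound as the \cite{KPa} results you use) and the jump term. Your additional bookkeeping — that $[\![u_h^r]\!]=[\![u_h]\!]$ since $u_h^c$ is conforming, and that the $h_E^{-1}$ versus $\mathfrak{K}_h=\mathfrak{K}/h_E$ scaling only costs a constant depending on the fixed penalty parameter $\mathfrak{K}$ — is exactly what the paper leaves implicit.
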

\begin{proof}
	The conclusion directly arises from the decomposition $u_h = u_h^c + u_h^r$, utilizing the approximation results provided in \cite[Lemma 4.5]{SZh}, and considering the edge residual.
\end{proof}
\begin{lemma}\label{lemma3.4}
	Let  $u$ be the unique solution to equation \eqref{3.SGBHEweak}, and $u_h^c$ be the conforming solution as defined in \eqref{3.confsol} with $\chi = u-u_h^c$ such that \eqref{3.appara} holds.  Denoting $\zeta$ and $\mathcal{F}$ from \eqref{3.estimator}, the subsequent assertion holds:
	\begin{align}\label{3.renon}
		|\!|\!|{ u-u^c_h}|\!|\!|^2\leq C(\zeta + \mathcal{F} ) 	|\!|\!|{ u-u^c_h}|\!|\!| +\tilde{\mathcal{A}}_{DG}(u_h,\chi) - \tilde{\mathcal{A}}_{DG}(u_h^c,\chi). 
	\end{align}
\end{lemma}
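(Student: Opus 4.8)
The plan is to establish \eqref{3.renon} by starting from the coercivity-type lower bound for the semilinear form $\tilde{\mathcal{A}}_{DG}$ and then exploiting Galerkin orthogonality against the conforming Cl\'ement interpolant. First I would set $\chi = u - u_h^c$ and use the fact that, since $u \in \H_0^1(\Omega)$, the second item of the Remark following \eqref{3.form3} gives $\tilde{\mathcal{A}}_{DG}(u,\chi) = \mathcal{A}(u,\chi) = \langle f, \chi\rangle$ by the weak formulation \eqref{3.SGBHEweak}. Combining this with the discrete lower bound from Lemma \ref{lemma3.2} (applied with $u_1 = u$, $u_2 = u_h^c$, $w = \chi$, after checking that the hypothesis \eqref{3.appara} makes the coefficient of $\|w\|_{\L^2}^2$ nonnegative so that the $|\!|\!|\cdot|\!|\!|^2$ term dominates), I get
\begin{align*}
	\frac{\nu}{2}|\!|\!| u - u_h^c|\!|\!|^2 &\leq \tilde{\mathcal{A}}_{DG}(u,\chi) - \tilde{\mathcal{A}}_{DG}(u_h^c,\chi) \\
	&= \langle f,\chi\rangle - \tilde{\mathcal{A}}_{DG}(u_h^c,\chi) \\
	&= \big[\langle f,\chi\rangle - \tilde{\mathcal{A}}_{DG}(u_h,\chi)\big] + \big[\tilde{\mathcal{A}}_{DG}(u_h,\chi) - \tilde{\mathcal{A}}_{DG}(u_h^c,\chi)\big].
\end{align*}
Thus it remains to bound the bracketed residual $\langle f,\chi\rangle - \tilde{\mathcal{A}}_{DG}(u_h,\chi)$ by $C(\zeta + \mathcal{F})|\!|\!|\chi|\!|\!|$; the remaining term $\tilde{\mathcal{A}}_{DG}(u_h,\chi) - \tilde{\mathcal{A}}_{DG}(u_h^c,\chi)$ is exactly what appears on the right of \eqref{3.renon}, so it is simply carried along.

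Next I would handle the residual term by inserting the Cl\'ement interpolant: since $I_h\chi \in V_h^c$ is conforming and piecewise affine, $\langle f_h, I_h\chi\rangle = \mathcal{A}_{DG}(u_h, I_h\chi) = \tilde{\mathcal{A}}_{DG}(u_h, I_h\chi) + N_{DG}(u_h, I_h\chi)$ from \eqref{3.SGBHE} and the first item of the Remark after \eqref{3.form3}; moreover $N_{DG}(u_h, I_h\chi)$ drops out because $[\![ I_h\chi]\!] = 0$ on interior edges and $I_h\chi$ vanishes on $\partial\Omega$. Writing $\langle f,\chi\rangle - \tilde{\mathcal{A}}_{DG}(u_h,\chi) = \langle f,\chi - I_h\chi\rangle + \langle f - f_h, I_h\chi\rangle - \tilde{\mathcal{A}}_{DG}(u_h, \chi - I_h\chi)$, I would integrate by parts element by element in $\tilde{\mathcal{A}}_{DG}(u_h, \chi - I_h\chi)$ (recall $\tilde{\mathcal{A}}_{DG}$ uses only $a_{DG}^c$, the advection form, and the reaction term, so the diffusion contribution produces the volume term $-\nu\Delta u_h$ and the interior-edge jump term $[\![(\nu\nabla_h u_h)\cdot\n]\!]$), collect all volume contributions into the element residual $R_K$ and all edge contributions into $R_E$, and apply Cauchy--Schwarz together with the Cl\'ement estimates \eqref{3.clmes1}--\eqref{3.clmes2}. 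This produces $\sum_K h_K\|R_K\|_{\L^2(K)}\cdot h_K^{-1}\|\chi - I_h\chi\|_{\L^2(K)} + \sum_E h_E^{1/2}\|R_E\|_{\L^2(E)}\cdot h_E^{-1/2}\|\chi - I_h\chi\|_{\L^2(E)} + \mathcal{F}\,|\!|\!|\chi|\!|\!|$, which after a discrete Cauchy--Schwarz over elements and edges is bounded by $C(\zeta + \mathcal{F})|\!|\!|\chi|\!|\!|$; the jump part $\zeta_{J_K}$ of $\zeta$ does not literally appear here but is harmless since $\zeta$ only needs to be an upper bound.

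One technical point I would be careful about is the nonlinear terms in $\tilde{\mathcal{A}}_{DG}(u_h, \chi - I_h\chi)$: the advection form $b_{DG}(u_h, u_h, \chi - I_h\chi)$ and the reaction term $\beta(c(u_h), \chi - I_h\chi)$ must be integrated by parts (or left in non-divergence form) so that their strong residuals are precisely the $\alpha u_h^\delta \sum_i \partial_{x_i} u_h$ and $\beta u_h(1-u_h^\delta)(u_h^\delta - \gamma)$ pieces of $R_K$, with no leftover edge terms beyond those already accounted for; here I would use that $\chi - I_h\chi \in \H^1$ so no jumps of the test function arise, and the $L^q$ bounds on $u_h$ from Remark \ref{rhs} guarantee all these products are integrable and controlled. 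The main obstacle is precisely this bookkeeping — correctly matching the element-by-element integration by parts of the full nonlinear form $\tilde{\mathcal{A}}_{DG}$ against $\chi - I_h\chi$ to the definitions of $R_K$ and $R_E$, while verifying that every jump/boundary term either vanishes (because $\chi - I_h\chi$ and, where relevant, $u_h^c$ are conforming) or is absorbed into $\zeta$ — rather than any deep analytic difficulty. Once the residual identity is assembled, the estimate \eqref{3.renon} follows by collecting terms, dividing through by $|\!|\!| u - u_h^c|\!|\!|$ where appropriate, and absorbing constants.
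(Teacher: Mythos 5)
Your overall route is the same as the paper's: apply Lemma \ref{lemma3.2} with $u_1=u$, $u_2=u_h^c$ to get the coercive lower bound, use $\tilde{\mathcal{A}}_{DG}(u,\chi)=\mathcal{A}(u,\chi)=\langle f,\chi\rangle$, insert the Cl\'ement interpolant through the discrete equation \eqref{3.SGBHE}, integrate by parts element-wise, and match the volume and edge contributions to $R_K$ and $R_E$ via \eqref{3.clmes1}--\eqref{3.clmes2}, carrying $\tilde{\mathcal{A}}_{DG}(u_h,\chi)-\tilde{\mathcal{A}}_{DG}(u_h^c,\chi)$ along untouched. That is exactly the structure of the paper's proof.

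However, there is one concrete error: your claim that $N_{DG}(u_h,I_h\chi)$ drops out. From \eqref{3.adg1.2}, $N_{DG}(u_h,I_h\chi)$ contains two edge sums; the one testing $[\![I_h\chi]\!]$ indeed vanishes because $I_h\chi$ is conforming and vanishes on $\partial\Omega$, but the symmetric one, $\sum_{E\in\mathcal{E}_h}\int_E \{\!\{\nabla_h I_h\chi\}\!\}\cdot[\![u_h]\!]\,\mathrm{d}s$, does \emph{not} vanish, since the DG solution $u_h$ is discontinuous across interior edges (and nonzero on boundary edges). This surviving term is precisely why the jump contribution $\zeta_{J_K}$ belongs in the estimator: in the paper it is the term $J_2=\nu N_{DG}(u_h,I_h\chi)$, bounded via an inverse-type estimate as in \eqref{3.12} by $C\big(\sum_{K\in\mathcal{T}_h}\zeta_{J_K}^2\big)^{1/2}|\!|\!|\chi|\!|\!|$. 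Your remark that ``the jump part $\zeta_{J_K}$ of $\zeta$ does not literally appear here but is harmless'' signals the same oversight — without bounding this nonzero consistency term your chain of estimates is incomplete, even though the fix is standard and keeps the final bound $C(\zeta+\mathcal{F})|\!|\!|\chi|\!|\!|$ intact. (A minor further point: the data term you produce as $\langle f-f_h,I_h\chi\rangle$ is not weighted by $h_K$ and so does not directly reduce to $\mathcal{F}$; the paper instead keeps the oscillation in the form $(f-f_h,\chi-I_h\chi)$, which pairs with \eqref{3.clmes1} to give $\mathcal{F}|\!|\!|\chi|\!|\!|$ as in \eqref{3.11}.)
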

\begin{proof}
	Utilizing the interpolation described in \eqref{3.clm}, we obtain:
	\begin{align}\label{3.15}
		( f,I_h \chi) =\mathcal{A}_{DG}(u_h, I_h\chi) = \tilde{\mathcal{A}}_{DG}(u_h, I_h\chi) + \nu  N_{DG}(u_h,I_h\chi),
	\end{align}
	where $N_{DG}(\cdot,\cdot)$ is defined in \eqref{3.adg1.2}. Applying Lemma \ref{lemma3.2} with $u_1 = u$, $u_2 = u_h^c$, and $\chi = u-u_h^c$, along with \eqref{3.15}, yields:
	\begin{align}\label{3.10}
		\nonumber	|\!|\!|{ u-u_h^c}|\!|\!|^2&\leq \tilde{\mathcal{A}}_{DG}(u,\chi) -   \tilde{\mathcal{A}}_{DG}(u_h^c,\chi)\\&\nonumber=  (f,\chi-I_h \chi)-   \tilde{\mathcal{A}}_{DG}(u_h,\chi-I_h \chi)+ \nu N_{DG}(u_h,I_h\chi)+\tilde{\mathcal{A}}_{DG}(u_h,\chi) - \tilde{\mathcal{A}}_{DG}(u_h^c,\chi)  \\&\nonumber= (f,\chi-I_h \chi) -\nu a_{DG}^c(u_h,\chi-I_h \chi) -\alpha b_{DG}(u_h,u_h,\chi-I_h \chi) +\beta(c(u_h),\chi-I_h \chi)\\&\quad + \nu N_{DG}(u_h,I_h\chi)+\tilde{\mathcal{A}}_{DG}(u_h,\chi) - \tilde{\mathcal{A}}_{DG}(u_h^c,\chi).
	\end{align}
	By employing integration by parts for the term $a_{DG}^c(u_h^c, \chi - I_h \chi)$, we attain:
	\begin{align}\label{3.intpar}
		\nonumber a_{DG}^c(u_h,\chi-I_h \chi) & = \sum_{K\in \mathcal{T}_h}\int_{K}\nabla_h u_h\nabla_h (\chi-I_h \chi) \mathrm{~d}K\\&=  -\sum_{K\in \mathcal{T}_h}\int_{K}\Delta u_h( \chi-I_h \chi) \mathrm{~d}K  +  \sum_{K\in \mathcal{T}_h}\int_{\partial K}\frac{\partial u_h}{\partial \nu} (\chi-I_h \chi) \mathrm{~d}S.
	\end{align}
	Upon substitution back into \eqref{3.10}, we derive:
	\begin{align*}
		|\!|\!|{ u-u_h^c}|\!|\!|^2&\leq \tilde{\mathcal{A}}_{DG}(u_h,\chi) - \tilde{\mathcal{A}}_{DG}(u_h^c,\chi) + (f,\chi-I_h \chi) + \nu N_{DG}(u_h,I_h\chi)\\&\quad- \bigg(\sum_{K\in \mathcal{T}_h}\int_{\partial K}\frac{\partial u_h}{\partial \nu} (\chi-I_h \chi) \mathrm{~d}S-\nu\sum_{K\in \mathcal{T}_h}\int_{K}\Delta u_h (\chi-I_h \chi) \mathrm{~d}K \\& \quad+ \alpha\sum_{K\in \mathcal{T}_h}\int_{K}u_h^{\delta}\sum_{i=1}^d\frac{\partial u_h}{\partial x_i}(\chi-I_h \chi)  \mathrm{~d}K- \sum_{K\in \mathcal{T}_h}\beta\int_{K}u_h(1-u_h^{\delta})(u_h^{\delta}-\gamma)(\chi-I_h \chi)  \mathrm{~d}K  \bigg)\\& = \tilde{\mathcal{A}}_{DG}(u_h,\chi) - \tilde{\mathcal{A}}_{DG}(u_h^c,\chi)+ (f-f_h,\chi-I_h \chi)+ \nu N_{DG}(u_h,I_h\chi)- \sum_{K\in \mathcal{T}_h}\nu\int_{\partial K}\frac{\partial u_h}{\partial \nu} (\chi-I_h \chi) \mathrm{~d}S\\&\quad+\sum_{K\in \mathcal{T}_h}\int_{K}\bigg(\nu\Delta u_h- \alpha u_h^{\delta}\sum_{i=1}^d\frac{\partial u_h}{\partial x_i}+\beta u_h(1-u_h^{\delta})(u_h^{\delta}-\gamma) + f_h\bigg)(\chi-I_h \chi) \mathrm{~d}K \\& =\tilde{\mathcal{A}}_{DG}(u_h,\chi) - \tilde{\mathcal{A}}_{DG}(u_h^c,\chi) + (f-f_h,\chi-I_h \chi)+ \nu N_{DG}(u_h,I_h\chi) \\&\quad - \sum_{K\in \mathcal{T}_h}\int_{\partial K}\nu\frac{\partial u_h}{\partial \nu} (\chi-I_h \chi) \mathrm{~d}S+\sum_{K\in \mathcal{T}_h}\int_{K}R_K (\chi-I_h \chi) \mathrm{~d}K\\& : = \tilde{\mathcal{A}}_{DG}(u_h,\chi) - \tilde{\mathcal{A}}_{DG}(u_h^c,\chi)+ J_1 + J_2 + J_3 +J_4,
	\end{align*}
	where the terms $J_i's$  
	are defined as follows 
	\begin{align*}
		J_1 &= (f-f_h,\chi-I_h \chi), \quad\quad J_2=\nu N_{DG}(u_h,I_h\chi),\\ J_3 &= -\nu\sum_{K\in \mathcal{T}_h}\int_{\partial K}\frac{\partial u_h}{\partial \nu} (\chi-I_h \chi) \mathrm{~d}S,\quad J_4= \sum_{K\in \mathcal{T}_h}\int_{K}R_K (\chi-I_h \chi) \mathrm{~d}K.
	\end{align*}
	Firstly, the $J_1$ term can be estimated using Cauchy-Schwarz inequality as 
	\begin{align}\label{3.11}
		|J_1|\leq\left(\sum_{K \in \mathcal{T}_h} h_K^2\|f-f_h\|^2_{\L^2(K)}\right)^{1 / 2}\bigg( \sum_{K\in \mathcal{T}_h}h_K^{-2}\| \chi-I_h \chi\|_{\L^2(K)}^2\bigg)^{\frac{1}{2}}.
	\end{align}
	Secondly, for $J_2$, employing the inverse inequality akin to \cite[Lemma 4.3]{SZh}, we obtain:
	\begin{align}\label{3.12}
		|J_2| \leq C \bigg(\sum_{K\in \mathcal{T}_h} \zeta_{J_K}^2\bigg)^{\frac{1}{2}}|\!|\!| \chi|\!|\!|.
	\end{align}
	Again, using Cauchy-Schwarz inequality and the estimate \eqref{3.clmes2}, we have 
	\begin{align}\label{3.13}
		\nonumber |J_3| &=\bigg| \sum_{K\in \mathcal{T}_h}\int_{\partial K}\frac{\partial u_h}{\partial \nu} (\chi-I_h \chi) \mathrm{~d}S\bigg| = \bigg|\sum_{E\in \mathcal{E}_h} \int_E [\![\nabla_h u_h]\!](\chi-I_h \chi) \mathrm{~d}S\bigg|\\&\nonumber\leq C \bigg(\sum_{E\in \mathcal{E}_h}h_E\| [\![\nabla_h u_h]\!]\|_{\L^2(E)}^2\bigg)^{\frac{1}{2}}\bigg(\sum_{E\in \mathcal{E}_h}h_E^{-1} \| \chi-I_h \chi\|_{\L^2(E)}^2\bigg)^{\frac{1}{2}}\\& \leq C \bigg(\sum_{E\in \mathcal{E}_h} \zeta_{e_K}^2\bigg)^{\frac{1}{2}}|\!|\!| \chi|\!|\!|.
	\end{align}
	Finally, $J_4$ can be estimated using \eqref{3.clmes1} as 
	\begin{align}\label{3.14}
		\nonumber |J_4| =  \bigg|\sum_{K\in \mathcal{T}_h}\int_{K}R_K (\chi-I_h \chi) \mathrm{~d}K\bigg| &\leq \bigg( \sum_{K\in \mathcal{T}_h}h_K^2\|R_K\|_{\L^2(K)}^2\bigg)^{\frac{1}{2}}\bigg( \sum_{K\in \mathcal{T}_h}h_K^{-2}\| \chi-I_h \chi\|_{\L^2(K)}^2\bigg)^{\frac{1}{2}}\\&\leq C \bigg(\sum_{K\in \mathcal{T}_h}\zeta_{R_K}^2\bigg)^{\frac{1}{2}}|\!|\!| \chi|\!|\!|.
	\end{align}
	Combining \eqref{3.11}-\eqref{3.14} gives the required result.
\end{proof}

Further, it is required to bound $\tilde{\mathcal{A}}_{DG}(u_h,\chi) - \tilde{\mathcal{A}}_{DG}(u_h^c,\chi)$ in terms of the residual, which is discussed in the following lemma.
\begin{lemma}\label{lemma3.6}
	\bl{Assume that $\delta \in \mathbb{N}$ for $d = 2,$ and $\delta =1,2,$ for $d=3$, and $u_h^r = u_h - u_h^c,$ we have}
	$$\tilde{\mathcal{A}}_{DG}(u_h,\chi) - \tilde{\mathcal{A}}_{DG}(u_h^c,\chi) \leq C  |\!|\!| u_h^r|\!|\!||\!|\!| \chi|\!|\!|.$$
\end{lemma}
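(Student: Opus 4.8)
The plan is to expand the difference $\tilde{\mathcal{A}}_{DG}(u_h,\chi) - \tilde{\mathcal{A}}_{DG}(u_h^c,\chi)$ term by term according to the definition \eqref{3.form3}, using $u_h = u_h^c + u_h^r$, and to bound each resulting piece by $C\,|\!|\!| u_h^r|\!|\!|\,|\!|\!|\chi|\!|\!|$. The three contributions are: the linear diffusion part $\nu\,a^c_{DG}(u_h^r,\chi)$, the nonlinear convective part $\alpha\,[b_{DG}(u_h,u_h,\chi) - b_{DG}(u_h^c,u_h^c,\chi)]$, and the reaction part $-\beta\,(c(u_h)-c(u_h^c),\chi)$. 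For the diffusion term, Cauchy--Schwarz on $a^c_{DG}$ together with the definition of the DG norm \eqref{3.dgnorm} immediately gives $\nu\,a^c_{DG}(u_h^r,\chi) \leq \nu\,|\!|\!| u_h^r|\!|\!|\,|\!|\!|\chi|\!|\!|$.

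For the convective term I would write $b_{DG}(u_h,u_h,\chi) - b_{DG}(u_h^c,u_h^c,\chi) = b_{DG}(u_h,u_h,\chi) - b_{DG}(u_h^c,u_h,\chi) + b_{DG}(u_h^c,u_h,\chi) - b_{DG}(u_h^c,u_h^c,\chi)$, i.e. split off the difference in each argument. Using the algebraic identity $a^\delta - b^\delta = (a-b)\sum_{j=0}^{\delta-1}a^j b^{\delta-1-j}$ (equivalently Taylor's formula, as in Lemma~\ref{2.crnclem111}) to handle the $u_h^\delta$ versus $(u_h^c)^\delta$ discrepancy in the coefficient $\mathbf{w}$, together with Hölder's inequality, the bound on the upwind flux terms from \cite{TKM}, and an inverse inequality to absorb the $h$ factors, each piece is controlled by $C\big(\|u_h\|_{\L^\infty}, \|u_h^c\|_{\L^\infty}\big)|\!|\!| u_h^r|\!|\!|\,|\!|\!|\chi|\!|\!|$; the $\L^\infty$ norms of $u_h$ and $u_h^c$ are then bounded by $C\|f\|_{\L^2}$ via Remark~\ref{rhs} (this is exactly where the dimension/degree restriction $\delta\in\mathbb{N}$ for $d=2$ and $\delta=1,2$ for $d=3$ enters, so that the Sobolev embeddings in Remark~\ref{rhs} apply). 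For the reaction term, $c(u_h) - c(u_h^c)$ expands into differences of $u_h^{\delta+1}$, $u_h^{2\delta+1}$ and $u_h$ against their $u_h^c$ counterparts; again applying the factorization identity, Hölder, the uniform $\L^\infty$ (or high-$\L^q$) bounds from Remark~\ref{rhs}, and Poincaré to pass from $\|u_h^r\|_{\L^2}$ to $|\!|\!| u_h^r|\!|\!|$, one obtains $\beta\,(c(u_h)-c(u_h^c),\chi) \leq C|\!|\!| u_h^r|\!|\!|\,\|\chi\|_{\L^2} \leq C|\!|\!| u_h^r|\!|\!|\,|\!|\!|\chi|\!|\!|$.

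Collecting the three bounds yields the claim with $C$ depending on $\nu,\alpha,\beta,\gamma,\delta$ and on $\|f\|_{\L^2}$ (through the a priori stability of $u_h$), but independent of $h$. The main obstacle is the convective term: one must carefully track the volume and the upwind boundary contributions of $b_{DG}$ and verify that the factor $h_E$ appearing in the face integrals, when combined with the inverse trace inequality, does not spoil the estimate — i.e. that everything genuinely closes in the mesh-independent DG norm rather than producing an uncontrolled negative power of $h$. A secondary point requiring care is ensuring the $\L^\infty$ bounds on $u_h$ and $u_h^c$ are legitimate in the stated range of $d$ and $\delta$; this is precisely why the hypothesis of the lemma restricts $\delta$, and it is handled by quoting Remark~\ref{rhs} together with the stability estimate for the discrete solution.
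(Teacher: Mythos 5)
Your overall decomposition coincides with the paper's: split the difference via \eqref{3.form3} into the diffusion, convective and reaction parts, treat the diffusion part by Cauchy--Schwarz in the DG norm, and extract $u_h-u_h^c$ from the nonlinear parts by Taylor's formula (the factorization identity). The step that does not close as written is how you finish the nonlinear estimates: you bound the coefficients by $\|u_h\|_{\L^{\infty}}$ and $\|u_h^c\|_{\L^{\infty}}$ and invoke Remark \ref{rhs} to claim these are $\leq C\|f\|_{\L^2}$. Remark \ref{rhs} does not provide this: the $\L^{\infty}$ bound there, \eqref{qinf}, is proved only for the $A_h$-projection $u_h^*$, using its approximation properties relative to the exact solution $u$; for the discrete solution $u_h$ itself only $\L^q$ bounds with $q<\infty$ in $2D$ and $q\leq 6$ in $3D$ are available, \eqref{1qinf}, and no stability bound at all is established for the averaged part $u_h^c=\mathfrak{A}_h u_h$ in $\L^{\infty}$. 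Hence your constant $C\big(\|u_h\|_{\L^{\infty}},\|u_h^c\|_{\L^{\infty}}\big)$ is not known to be $h$-independent. Relatedly, your explanation of where the hypothesis $\delta\in\mathbb{N}$ ($d=2$), $\delta=1,2$ ($d=3$) enters is off: in the paper it comes from the finite-exponent embedding $\H_0^1(\Omega)\subset\L^{2(\delta+1)}(\Omega)$ (which forces $2(\delta+1)\leq 6$ in $3D$), not from an $\L^{\infty}$ estimate.

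The repair is exactly the paper's route, and it also removes your worry about the upwind face terms and inverse trace inequalities: in \eqref{3.b1} the convective difference is first integrated by parts, exploiting the skew-symmetrized structure of $b_{DG}$ in \eqref{3.bdg}, so that $u_h^{\delta+1}-(u_h^c)^{\delta+1}$ is tested against $\partial\chi/\partial x_i$ as a volume term; then Taylor's formula and H\"older's inequality with exponents $\tfrac{2(\delta+1)}{\delta}$, $2(\delta+1)$, $2$ give a bound by $\big(\|u_h\|^{\delta}_{\L^{2(\delta+1)}}+\|u_h^c\|^{\delta}_{\L^{2(\delta+1)}}\big)\|u_h-u_h^c\|_{\L^{2(\delta+1)}}\|\nabla_h\chi\|_{\L^2(\mathcal{T}_h)}$, after which the $\L^{2(\delta+1)}$ norms of $u_h$ and $u_h^c$ are controlled through the (broken) Sobolev embedding and the stability estimate by $C\|f\|_{\L^2}$, while $\|u_h-u_h^c\|_{\L^{2(\delta+1)}}\leq C|\!|\!| u_h^r|\!|\!|$. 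The reaction terms are treated the same way in \eqref{3.rc1}--\eqref{3.rc3}, again never invoking $\L^{\infty}$. If you prefer your variable-by-variable splitting of $b_{DG}$ with the upwind terms retained, you must first supply a genuine $h$-uniform $\L^{\infty}$ bound for $u_h$ and $u_h^c$, which the paper does not contain; otherwise switch to the finite-$q$ norms as above.
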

\begin{proof} 
	Considering the definition of $\mathcal{A}_{DG}$ as provided in \eqref{3.ADGf}, it is evident that:
	\begin{align*}
		\tilde{\mathcal{A}}_{DG}(u_h,\chi) - \tilde{\mathcal{A}}_{DG}(u_h^c,\chi)&= \nu a_{DG}(u_h-u_h^c, \chi) + \alpha \big(b_{DG}(u_h,u_h,\chi)-b_{DG}(u_h^c,u_h^c,\chi)\big)\\&\quad-\beta\big((c(u_h),\chi)-(c(u_h^c),\chi)\big),
	\end{align*}
	By applying the Cauchy-Schwarz inequality, we obtain:
	\begin{align}
		a_{DG}(u_h-u_h^c, \chi)\leq |\!|\!| u_h^r|\!|\!||\!|\!| \chi|\!|\!|.
	\end{align}
	For the second term, we consider $	J'_1 =\alpha( b_{DG}(u_h,u_h,\chi)-b_{DG}(u_h^c,u_h^c,\chi))$,
	which can be estimated using an integration by parts and inverse estimate in the first term. Then applying Taylor's formula, H\"older's  inequality and the Sobolev embedding, $\L^{p}(\Omega)\subset \H_0^1(\Omega),$ for, $1\leq p < \infty,$ for $d=2,$ and $1\leq p\leq 6,$ for $d=3$ and the stability estimates leads to 
	\begin{align}\label{3.b1}
		\nonumber	|J'_1|
		& \leq \frac{2\alpha\delta}{(\delta+2)(\delta+1)}\sum_{i=1}^d \left(u_h^{\delta+1}-(u_h^c)^{\delta+1}, \frac{\partial \chi}{\partial{x_i}}\right)
		\\\nonumber
		&\leq  \frac{2^{\delta}\alpha\delta}{(\delta+2)}\bigg(\|u_h\|^{\delta}_{\L^{2(\delta+1)}}+\|u_h^c\|^{\delta}_{\L^{2(\delta+1)}}\bigg)\|u_h-u_h^c\|_{\L^{2(\delta+1)}}\|\nabla_h \chi\|_{\L^2(\mathcal{T}_h)}
		\\
		&\leq C |\!|\!| u_h^r|\!|\!||\!|\!| \chi|\!|\!|.
	\end{align}
	The reaction term $c(u)$ can be rewritten as  
	\begin{align*}
		J_2'& = (c(u_h),\chi)-(c(u_h),\chi) \\&= \underbrace{ -2\beta\gamma(u_h-u_h^c,\chi)}_{J'_3}+\underbrace{2\beta(1+\gamma)({u_h}^{\delta+1}-(u_h^c)^{\delta+1},\chi) }_{J'_4}\underbrace{-2\beta({u_h}^{2\delta+1}-(u_h^c)^{2\delta+1},\chi)}_{J'_5},
	\end{align*} 
	Using Cauchy-Schwarz, we have 
	\begin{align}\label{3.rc1}
		|J'_3|&\leq C |\!|\!| u_h^r|\!|\!||\!|\!| \chi|\!|\!|.
	\end{align}
	An application of Taylor's formula yields
	\begin{align}\label{3.rc2}
		|J_4'|&=2\beta(1+\gamma)(\delta+1)((\theta u_h+(1-\theta)u_h^c)^{\delta}(u_h-u_h^c),\chi) 
		\nonumber\\&\leq 2^{\delta}\beta(1+\gamma)(\delta+1)\left(\|u_h\|^{\delta}_{\L^{2(\delta+1)}}+\|u_h^c\|^{\delta}_{\L^{2(\delta+1)}}\right)\|u_h-u_h^c\|_{\L^{2(\delta+1)}}\|\chi\|_{\L^2}
		\nonumber\\&\leq  C|\!|\!| u_h^r|\!|\!||\!|\!| \chi|\!|\!|.
	\end{align}
	Finally, using Taylor's formula, H\"older inequality and Sobolev embedding, we have
	\begin{align}\label{3.rc3}
		|J_5'|&=2(2\delta+1)\beta\left((\theta u_h+(1-\theta)u_h^c)^{2\delta}(u_h-u_h^c),\chi\right)
		\nonumber\\&\leq  2^{2\delta}(2\delta+1)\beta\left(\|u_h\|_{\L^{2(\delta+1)}}^{2\delta}+\|u_h^c\|_{\L^{2(\delta+1)}}^{2\delta}\right)\|u_h-u_h^c\|_{\L^{2(\delta+1)}}\|\chi\|_{\L^{2(\delta+1)}}
		\nonumber\\&\leq C |\!|\!| u_h^r|\!|\!||\!|\!| \chi|\!|\!|.
	\end{align}
	Upon substituting back, the desired result seamlessly follows.
\end{proof}
\begin{theorem}\label{3.thm3.7}
	Let $u$ be the unique solution of \eqref{3.SGBHEweak} and $u_h$ be the DG approximated solution. Let $\zeta$ be the a posteriori error estimator defined in  \eqref{3.estimator}, then the following estimator holds:
	\begin{align}
		|\!|\!|{ u-u_h}|\!|\!|\leq C(\zeta +\mathcal{F} ).
	\end{align}
\end{theorem}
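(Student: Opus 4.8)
The plan is to assemble Lemmas~\ref{lemma3.3}, \ref{lemma3.4}, and \ref{lemma3.6} through a triangle inequality combined with an absorption argument. First I would split the error using the conforming/non-conforming decomposition \eqref{3.confsol}, $u_h = u_h^c + u_h^r$: by the triangle inequality,
\begin{align*}
	|\!|\!|{u - u_h}|\!|\!| \leq |\!|\!|{u - u_h^c}|\!|\!| + |\!|\!|{u_h^r}|\!|\!|.
\end{align*}
The second term is controlled immediately by Lemma~\ref{lemma3.3}: since $\sum_{K\in\mathcal{T}_h}\zeta_{J_K}^2 \leq \zeta^2$ by the definition in \eqref{3.estimator}, we get $|\!|\!|{u_h^r}|\!|\!| \leq C\zeta$.

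For the first term I would take $\chi = u - u_h^c \in \H_0^1(\Omega)$ in Lemma~\ref{lemma3.4} and then bound the residual of the nonlinear form appearing there by Lemma~\ref{lemma3.6} (with $u_h^r = u_h - u_h^c$), which yields
\begin{align*}
	|\!|\!|{u - u_h^c}|\!|\!|^2 \leq C(\zeta + \mathcal{F})\,|\!|\!|{u - u_h^c}|\!|\!| + C\,|\!|\!|{u_h^r}|\!|\!|\,|\!|\!|{u - u_h^c}|\!|\!|.
\end{align*}
If $|\!|\!|{u - u_h^c}|\!|\!| = 0$ the asserted bound is trivial (all estimator contributions are nonnegative); otherwise, dividing through by $|\!|\!|{u - u_h^c}|\!|\!|$ and invoking Lemma~\ref{lemma3.3} once more gives $|\!|\!|{u - u_h^c}|\!|\!| \leq C(\zeta + \mathcal{F}) + C\zeta \leq C(\zeta + \mathcal{F})$.

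Combining the two estimates produces $|\!|\!|{u - u_h}|\!|\!| \leq C(\zeta + \mathcal{F}) + C\zeta \leq C(\zeta + \mathcal{F})$, which is the claim. There is no genuinely hard step here: all of the analytical work—the Cl\'ement interpolation estimates \eqref{3.clmes1}--\eqref{3.clmes2}, the elementwise integration by parts, the Taylor expansions of the convection and reaction nonlinearities, the inverse and Sobolev embedding estimates—has already been done inside Lemmas~\ref{lemma3.3}--\ref{lemma3.6}, so the remaining task is purely a bookkeeping synthesis. The one point I would flag is that Lemma~\ref{lemma3.4}, through its use of Lemma~\ref{lemma3.2}, is valid under the parameter condition \eqref{3.appara} that guarantees the relevant coercivity (equivalently, the well-posedness regime of the SGBHE); I would either carry this hypothesis explicitly into the statement of the theorem or remark that it is the same condition under which $u_h$ is defined, so no extra assumption is actually incurred.
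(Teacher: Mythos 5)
Your proposal is correct and follows essentially the same route as the paper's proof: a triangle inequality over the conforming/non-conforming split $u_h = u_h^c + u_h^r$, with $|\!|\!|{u_h^r}|\!|\!|$ controlled by Lemma~\ref{lemma3.3} and $|\!|\!|{u-u_h^c}|\!|\!|$ controlled by combining Lemma~\ref{lemma3.4} with Lemma~\ref{lemma3.6} and absorbing. Your explicit division/absorption step and your remark that the hypothesis \eqref{3.appara} is inherited from Lemma~\ref{lemma3.4} (and is left implicit in the theorem statement) are both accurate refinements of the paper's terser argument.
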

\begin{proof}
	Using triangle inequality, we have
	$$|\!|\!|{ u-u_h}|\!|\!| = |\!|\!|{ u-u^c_h-u_h^r}|\!|\!|\leq |\!|\!|{ u-u^c_h}|\!|\!| + |\!|\!|{ u^r_{h}}|\!|\!|.$$
	An application of Lemma \ref{lemma3.3} -- \ref{lemma3.6} yields the desired outcome.
\end{proof}
\subsection{Efficiency}
The efficiency of the estimator can be evaluated through the conventional polynomial bubble functions technique \cite[p.~1771]{VerS}. To accomplish this, we introduce an interior bubble function $b_K$ defined and supported on an element $K$. Let $E$ be a common internal edge between two elements $K$ and $K'$, and let $\omega_E$ denote the patch $K \cup K'$. Consequently, an edge bubble function $b_E$ is defined on $E$ in a manner that it is positive in the interior of the patch and zero on the patch's boundary. It is noteworthy that these functions satisfy the conditions:
$$b_K\in \H_0^1(K),\quad b_E\in \H_0^1(\omega_E), \quad \text{and} \quad \|b_K\|_{\L^{\infty}(K)} =  \|b_E\|_{\L^{\infty}(E)} =1.$$ 
Further, the following result holds true \cite[Lemma 4.10]{SZh}.
\begin{lemma}\label{lemma3.12}
	For every element $K$ and edge $E$, the subsequent outcomes are valid:
	\begin{align}
		\|b_Kv\|_{\L^2(K)}\leq C \|v\|_{\L^2(K)},\quad 
		\|v\|_{\L^2(K)}^2&\leq C (v, b_Kv)_K,\label{3.3.12.1}\\
		|\!|\!|b_K v|\!|\!| \leq h_K^{-1}\|v\|_{\L^2(K)},\quad
		\|\sigma\|_{\L^2(E)}^2 &\leq (\sigma,b_E\sigma)_E,\label{3.3.12.2}\\
		\|b_E\sigma\|_{\L^2(\omega_E)} \leq h_E^{\frac{1}{2}}\|\sigma\|_{\L^2(E)},\quad
		|\!|\!|b_E\sigma|\!|\!|_{\omega_E} &\leq h_E^{\frac{-1}{2}}\|\sigma\|_{\L^2(E)}\label{3.3.12.3}, 	
	\end{align}
	where $v$ and $\sigma$ are defined on elements and faces, respectively.
\end{lemma}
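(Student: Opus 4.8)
The plan is to reduce every inequality to a finite-dimensional norm-equivalence statement on a fixed reference configuration and then transport it to a general element or edge patch by an affine change of variables, keeping careful track of the $h_K$- and $h_E$-powers produced by the Jacobians. Fix a reference simplex $\hat K$ (and, for rectangular meshes, the reference square) with its standard interior bubble $\hat b_{\hat K}\in\H_0^1(\hat K)$, normalized so that $\|\hat b_{\hat K}\|_{\L^\infty(\hat K)}=1$. Since $v|_K\in\mathcal P_k(K)$, the pullback $\hat v$ lives in the fixed finite-dimensional space $\mathcal P_k(\hat K)$; on this space the functionals $\hat v\mapsto\|\hat v\|_{\L^2(\hat K)}$, $\hat v\mapsto\|\hat b_{\hat K}\hat v\|_{\L^2(\hat K)}$ and $\hat v\mapsto (\hat v,\hat b_{\hat K}\hat v)_{\hat K}^{1/2}$ are all norms (the last one because $\hat b_{\hat K}>0$ in the interior), hence mutually equivalent with constants depending only on $k$ and $\hat K$. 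This yields the reference versions of \eqref{3.3.12.1}. For \eqref{3.3.12.2} one uses in addition the inverse estimate $\|\nabla\hat w\|_{\L^2(\hat K)}\le C\|\hat w\|_{\L^2(\hat K)}$ for $\hat w\in\mathcal P_{k+1}(\hat K)$, applied to $\hat w=\hat b_{\hat K}\hat v$, together with the observation that $b_Kv\in\H_0^1(K)$, so that its DG norm $|\!|\!| b_Kv|\!|\!|$ collapses to $\|\nabla(b_Kv)\|_{\L^2(K)}$ (every jump contribution vanishes).

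Next I would push these estimates forward. Let $F_K:\hat K\to K$ be the affine map; shape-regularity gives $\|DF_K\|\sim h_K$, $\|DF_K^{-1}\|\sim h_K^{-1}$ and $|\det DF_K|\sim h_K^d$. Changing variables in the reference inequalities produces exactly the stated $h_K$ powers: the $\L^2$-norm scales like $h_K^{d/2}$ on both sides of \eqref{3.3.12.1}, so these factors cancel and the inequalities survive with $h$-independent constants; in \eqref{3.3.12.2} the gradient contributes the extra $h_K^{-1}$ that is displayed. The normalization $\|b_K\|_{\L^\infty(K)}=1$ is preserved since it is invariant under pullback.

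For the edge quantities I would argue on the patch $\omega_E=K\cup K'$. Fix a reference patch $\hat\omega_{\hat E}$ glued along a reference edge $\hat E$, with edge bubble $\hat b_{\hat E}\in\H_0^1(\hat\omega_{\hat E})$ positive in the interior and normalized by $\|\hat b_{\hat E}\|_{\L^\infty(\hat E)}=1$. For a polynomial $\sigma$ on $E$, extend it constantly in the direction transversal to $\hat E$; then $\hat b_{\hat E}\hat\sigma$ lies in a fixed finite-dimensional space, $\hat\sigma\mapsto(\hat\sigma,\hat b_{\hat E}\hat\sigma)_{\hat E}^{1/2}$ is a norm on polynomials on $\hat E$, and the reference forms of the second relation in \eqref{3.3.12.2} and of \eqref{3.3.12.3} follow from norm equivalence plus the inverse estimate on $\hat\omega_{\hat E}$. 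Transporting by the affine maps on the two sub-elements (and using again that $b_E\sigma\in\H_0^1(\omega_E)$, so $|\!|\!| b_E\sigma|\!|\!|_{\omega_E}=\|\nabla(b_E\sigma)\|_{\L^2(\omega_E)}$), the surface measure contributes $h_E^{d-1}$ and the volume measure $h_E^d$, which gives the $h_E^{1/2}$ gain in the first relation of \eqref{3.3.12.3} and the $h_E^{-1/2}$ loss in the second. The one point needing genuine care — the main technical obstacle — is this transversal extension of edge data and the proof that it is bounded on the whole patch uniformly in $h$: the extension must be chosen (constant along a fixed reference direction) so that it is compatible with the affine pullbacks on $K$ and $K'$ simultaneously, which is precisely where mesh shape-regularity enters. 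Everything else is bookkeeping of scaling factors and the elementary equivalence of norms on finite-dimensional spaces.
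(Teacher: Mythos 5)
The paper does not actually prove this lemma: it is quoted verbatim from the cited reference (\cite[Lemma 4.10]{SZh}), i.e.\ the standard Verf\"urth-type bubble-function estimates. Your scaling argument---norm equivalence on a fixed finite-dimensional reference space, affine push-forward with $h_K$/$h_E$ bookkeeping, the inverse estimate for the gradient bounds, the observation that $b_Kv\in\H_0^1(K)$ and $b_E\sigma\in\H_0^1(\omega_E)$ kill all jump terms in the DG norm, and a shape-regular transversal extension of the edge polynomial to the patch---is precisely how the cited result is established, so your proposal is correct and in substance identical to the proof the paper delegates to the literature. The only slip is cosmetic: $\hat b_{\hat K}\hat v$ lies in $\mathcal P_{k+d+1}(\hat K)$ (the interior bubble has degree $d+1$), not $\mathcal P_{k+1}(\hat K)$; this changes nothing, since it is still a fixed finite-dimensional space on the reference element.
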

For the element-wise residual, the following estimates holds:
\begin{lemma}\label{lemma3.13}
	There holds:
	\begin{align}
		\Big(\sum_{K \in \mathcal{T}_h}	\zeta_{R_K}^2\Big)^{\frac{1}{2}}\leq |\!|\!|u_h-u|\!|\!| + \mathcal{F}
	\end{align}
\end{lemma}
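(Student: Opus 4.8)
The plan is to use the standard residual efficiency argument based on the interior bubble function $b_K$. Fix an element $K \in \mathcal{T}_h$ and set $v = b_K R_K$, which by construction belongs to $\H_0^1(K)$ and can be extended by zero to all of $\Omega$, so $v \in \H_0^1(\Omega)$ with $[\![v]\!] = 0$ on every edge. First I would use \eqref{3.3.12.1} to write $\|R_K\|_{\L^2(K)}^2 \le C (R_K, b_K R_K)_K = C(R_K, v)_K$. Then I would replace the volume term $R_K$ by the exact-equation residual: since $u$ solves \eqref{3.SGBHEweak} and $v$ is supported in $K$ with $v=0$ on $\partial\Omega$, an integration by parts gives $(R_K, v)_K = (f - f_h, v)_K + \tilde{\mathcal{A}}_{DG}(u,v) - [\text{discrete terms evaluated on } u_h]$, so that $(R_K,v)_K$ reduces to $(f-f_h,v)_K$ plus the difference of the nonlinear/diffusion forms applied to $u$ and $u_h$ tested against $v$. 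Concretely, because $v$ has no jumps, $\tilde{\mathcal{A}}_{DG}(u_h,v) = \mathcal{A}_{DG}(u_h,v)$ restricted to $K$ and the edge contributions of $a_{DG}$ drop out, leaving $(R_K,v)_K \le \big((f-f_h,v)_K\big) + C\big(\nu\|\nabla_h(u-u_h)\|_{\L^2(K)} + (\text{nonlinear terms})\big)\|\nabla v\|_{\L^2(K)}$.

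Next I would control the nonlinear terms exactly as in Lemma \ref{lemma3.6} and Lemma \ref{2.crnclem111}: using Taylor's formula, Hölder's inequality, and the Sobolev/stability bounds collected in Remark \ref{rhs} and \eqref{1qinf}, the advection difference $\alpha(b_{DG}(u,u,v) - b_{DG}(u_h,u_h,v))$ and the reaction difference $\beta(c(u) - c(u_h), v)$ on $K$ are each bounded by $C(\|u - u_h\|_{\L^2(K')} )\,\|\nabla v\|_{\L^2(K)}$ on a small neighbourhood (in fact on $K$ alone since $v$ is supported there), where the constant depends only on $\|f\|_{\L^2}$ and the problem parameters through the uniform $\L^q$ bounds on $u$ and $u_h$. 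Combining, I would get $\|R_K\|_{\L^2(K)}^2 \le C\big(\|f-f_h\|_{\L^2(K)} + h_K^{-1}\|u-u_h\|_{\L^2(K)} + |u-u_h|_{\H^1(K)}\big)\|v\|_{\L^2(K)}$, and then invoke the inverse estimate $|\!|\!| b_K R_K|\!|\!| \le h_K^{-1}\|R_K\|_{\L^2(K)}$ from \eqref{3.3.12.2} together with $\|v\|_{\L^2(K)} \le \|R_K\|_{\L^2(K)}$.

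After dividing through by $\|R_K\|_{\L^2(K)}$, multiplying by $h_K$, and recalling $\zeta_{R_K}^2 = h_K^2\|R_K\|_{\L^2(K)}^2$, this yields $\zeta_{R_K} \le C\big(h_K\|f-f_h\|_{\L^2(K)} + |\!|\!| u - u_h|\!|\!|_K\big)$ locally; squaring, summing over $K \in \mathcal{T}_h$, using finite overlap of the patches, and taking square roots produces $\big(\sum_K \zeta_{R_K}^2\big)^{1/2} \le C\big(|\!|\!| u_h - u|\!|\!| + \mathcal{F}\big)$, which is the claim (the constant can be absorbed as in the statement, or the statement's implicit $C=1$ normalization comes from the bubble-function constants). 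I expect the main obstacle to be handling the nonlinear terms with the correct local norms: one must make sure the Taylor-expansion argument produces factors of $u-u_h$ in $\L^2$ (not in higher $\L^q$ norms that cannot be bounded by the DG energy norm locally) and that all remaining factors are uniformly bounded via the a priori stability estimates \eqref{1qinf} and \eqref{qinf}; the advection term in particular requires the integration-by-parts trick to move the derivative off $v$ so that only $\|\nabla v\|_{\L^2(K)}$, controlled by the inverse inequality, appears. A secondary technical point is that $f_h$ rather than $f$ appears in $R_K$, so the data oscillation term $\mathcal{F}$ must be carried along throughout, exactly as in the reliability proof.
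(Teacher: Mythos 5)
Your proposal follows essentially the same route as the paper's proof: testing the residual against the interior bubble function, using the exact equation together with integration by parts to shift the advection derivative, bounding the nonlinear differences as in Lemma \ref{lemma3.6} with the stability bounds of Remark \ref{rhs}, and closing with the bubble-function inverse estimates of Lemma \ref{lemma3.12} before summing over elements, carrying the data oscillation term $\mathcal{F}$ throughout. The only differences are cosmetic (the paper defines the scaled test function $B|_K = h_K^2 R_K b_K$ at the outset and works with the global sum directly, while you scale by $h_K$ at the end and argue element-wise), so the argument is correct as proposed.
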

\begin{proof}
	Recall that 
	$$R_K := \bigg\{f_h+\nu\Delta u_h- \alpha u_h^{\delta}\sum\limits_{i=1}^d\frac{\partial u_h}{\partial x_i}+\beta u_h(1-u_h^{\delta})(u_h^{\delta}-\gamma)\bigg\}\bigg|_K,$$ 
	we set $B|_K = h_K^2R_Kb_k. $ Using estimate \eqref{3.3.12.1}, we get
	\begin{align}
		\sum_{K \in \mathcal{T}_h}	\zeta_{R_K}^2 &= \sum_{K \in \mathcal{T}_h} h_K^2\|R_K\|_{\L^2(K)} \leq C\sum_{K \in \mathcal{T}_h}	(R_K,h_K^2b_KR_K)_K \leq C\sum_{K \in \mathcal{T}_h}	(R_K,B)_K \\&=  \sum_{K \in \mathcal{T}_h} \Big(f_h+\nu\Delta u_h- \alpha u_h^{\delta}\sum\limits_{i=1}^d\frac{\partial u_h}{\partial x_i}+\beta u_h(1-u_h^{\delta})(u_h^{\delta}-\gamma),B\Big)_K.
	\end{align}
	Using the exact solution $\Big(f+\nu\Delta u- \alpha u^{\delta}\sum\limits_{i=1}^d\frac{\partial u}{\partial x_i}+\beta u(1-u^{\delta})(u^{\delta}-\gamma)\Big)\Big|_K = 0,$ and integration by parts, we achieve
	\begin{align}
		\sum_{K \in \mathcal{T}_h}	\zeta_{R_K}^2 &\leq C \sum_{K \in \mathcal{T}_h} \Big(\nu(\nabla_h(u_h-u),\nabla_hB)_K + \frac{\alpha}{\delta+1} (u_h^{\delta+1}-u^{\delta+1},\nabla_hB)_K\Big)\\&\quad+ \sum_{K \in \mathcal{T}_h} \Big((c(u_h))-c(u) +(f_h-f), B\Big)_K
	\end{align}
	by an application of Cauchy-Schwarz inequality and estimate similar to \eqref{lemma3.6}, we get 
	\begin{align}
		\sum_{K \in \mathcal{T}_h}	\zeta_{R_K}^2 &\leq C \left(|\!|\!|u_h-u|\!|\!| +  \Big(\sum_{K \in \mathcal{T}_h} h_K^2\|f-f_h\|^2_{\L^2(K)}\Big)^{1 / 2} \right)\Big(\sum_{K \in \mathcal{T}_h}|\!|\!|B|\!|\!|^2_K + h_K^{-2}\|B\|_{\L^2(K)}^2\Big)^{\frac{1}{2}}.
	\end{align}
	Finally, using estimates in \eqref{3.3.12.1}, \eqref{3.3.12.2} obtained in Lemma \eqref{lemma3.12}, we have 
	\begin{align}
		|\!|\!|B|\!|\!|^2_K \leq C h_K^2\|R_K\|_{\L^2(K)}^2 \qquad \text{and }\quad h_K^{-2}\|B\|_{\L^2(K)}^2 \leq C h_K^2\|R_K\|_{\L^2(K)},
	\end{align}
	yields the required proof.
\end{proof} 
\begin{lemma}\label{lemma3.14}
	The following estimate holds 
	\begin{align}
		\Big(\sum_{E \in \mathcal{E}_h}	\zeta_{R_E}^2\Big)^{\frac{1}{2}}\leq |\!|\!|u-u_h|\!|\!| + \mathcal{F}.
	\end{align}
\end{lemma}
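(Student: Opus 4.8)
The plan is to bound the edge residual $\zeta_{R_E} = (h_E\|R_E\|^2_{\L^2(E)})^{1/2}$, where $R_E = \frac12[\![(\nu\nabla_h u_h)\cdot\n]\!]$ on interior edges (and $0$ on boundary edges), by localizing with the edge bubble function $b_E$ supported on the patch $\omega_E = K\cup K'$. First I would set $\sigma = R_E$ on $E$, extend it suitably to $\omega_E$, and define the test function $v_E = b_E\sigma \in \H_0^1(\omega_E)$. By the first estimate in \eqref{3.3.12.2}, $\|\sigma\|_{\L^2(E)}^2 \le (\sigma, b_E\sigma)_E$, so $\zeta_{R_E}^2 = h_E\|R_E\|^2_{\L^2(E)} \le h_E(R_E, v_E)_E$. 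The key is to rewrite $(R_E,v_E)_E$ in terms of quantities involving $u-u_h$ and the data oscillation.

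The main step is to reconnect the edge term to the PDE via integration by parts on each of $K$ and $K'$. Since $v_E$ vanishes on $\partial\omega_E$, summing the elementwise identities gives
\begin{align*}
	(R_E, v_E)_E = \sum_{K''\subset\omega_E}\Big( (\nu\nabla_h u_h, \nabla_h v_E)_{K''} - (\nu\Delta u_h, v_E)_{K''} \Big).
\end{align*}
Now I would insert the element residual: since $R_K = f_h + \nu\Delta u_h - \alpha u_h^\delta\sum_i\partial_{x_i}u_h + \beta c(u_h)$ on each element (abbreviating the reaction term), the term $-(\nu\Delta u_h, v_E)$ can be replaced by $(R_K - f_h + \alpha u_h^\delta\sum_i\partial_{x_i}u_h - \beta c(u_h), v_E)$, and then using the strong form of the continuous equation $f + \nu\Delta u - \alpha u^\delta\sum_i\partial_{x_i}u + \beta c(u) = 0$ on each element together with integration by parts for the $\nu\nabla u$ term, the whole sum collapses to terms of the form $\nu(\nabla_h(u_h-u), \nabla_h v_E)$, $\frac{\alpha}{\delta+1}(u_h^{\delta+1}-u^{\delta+1}, \nabla_h v_E)$, $(c(u_h)-c(u), v_E)$, $(f_h - f, v_E)$, plus a contribution $\sum_{K''}(R_{K''}, v_E)$ that is controlled by $\zeta_{R_K}$ from Lemma \ref{lemma3.13}. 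Applying Cauchy--Schwarz, bounding the nonlinear differences exactly as in Lemma \ref{lemma3.6} (using Taylor's formula, H\"older's inequality, Sobolev embedding and the stability estimates of Remark \ref{rhs}), and finally invoking the bubble-function estimates \eqref{3.3.12.3}, namely $\|b_E\sigma\|_{\L^2(\omega_E)}\le h_E^{1/2}\|\sigma\|_{\L^2(E)}$ and $|\!|\!| b_E\sigma|\!|\!|_{\omega_E}\le h_E^{-1/2}\|\sigma\|_{\L^2(E)}$, the factors of $h_E$ balance so that $h_E(R_E,v_E)_E \le C(|\!|\!| u-u_h|\!|\!|_{\omega_E} + \mathcal{F}_{\omega_E})\|R_E\|_{\L^2(E)}$. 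Dividing by $\|R_E\|_{\L^2(E)}$ and summing over all edges with finite overlap of patches yields the claim.

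The main obstacle I anticipate is handling the nonlinear advection and reaction differences $u_h^{\delta+1}-u^{\delta+1}$, $u_h^{2\delta+1}-u^{2\delta+1}$, etc., after integration by parts: one must be careful that Taylor's formula produces a factor like $(\theta u_h + (1-\theta)u)^\delta$ whose $\L^q$ norm needs to be controlled uniformly in $h$, which is exactly where the stability bounds \eqref{1qinf}--\eqref{qinf} and the regularity of $u$ (Theorem \ref{2M.thm33}, with the restriction $\delta\in\mathbb{N}$ for $d=2$ and $\delta=1,2$ for $d=3$) enter. A secondary technical point is the proper extension of $R_E$ from the edge $E$ to the patch $\omega_E$ so that the bubble-function product $b_E R_E$ is well-defined as an $\H_0^1(\omega_E)$ function; this is standard (e.g.\ constant extension along the normal, or using the polynomial inverse estimates of \cite{VerS}), but should be cited rather than belabored.
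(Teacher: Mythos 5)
Your proposal is correct and follows essentially the same route as the paper: localize the edge residual with the edge bubble function via \eqref{3.3.12.2}, integrate by parts over the patch $\omega_E$, insert the exact solution through the strong form of the equation, and control the resulting element-residual, nonlinear (Taylor/H\"older/Sobolev) and data-oscillation terms using Lemma \ref{lemma3.13} and the bubble estimates \eqref{3.3.12.3}. The only difference is cosmetic ordering (you insert $R_K$ before invoking the PDE, the paper writes the edge term as the jump of $\nu\nabla_h(u_h-u)$ first), which does not change the argument.
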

\begin{proof}
	For an interior edge $E\in\mathcal{E}_h^i$, we define the edge bubble function as
	\begin{align}\label{tau}
		\sigma = \frac{1}{2}\sum_{E\in \mathcal{E}_h^i} [\![(\nu\nabla u_h)\cdot\n]\!]b_E. 
	\end{align}  
	Using \eqref{3.3.12.2} and that $[\![(\nu\nabla_h u)\cdot\n]\!] = 0$ on the interior edge, we have 
	\begin{align}
		\sum_{K \in \mathcal{T}_h}	\zeta_{R_E}^2 \leq C \sum_{E\in \mathcal{E}_h^i} ([\![(\nu\nabla_h u_h)\cdot\n]\!], \sigma)_E = \sum_{E\in \mathcal{E}_h^i} \Big([\![(\nu\nabla_h (u_h-u))\cdot\n]\!], \sigma\Big)_E.
	\end{align}
	After integration by parts over each element of the patch $\omega_E$, we obtain 
	\begin{align}
		\big([\![(\nu\nabla_h (u_h-u))\cdot\n]\!], \sigma\big)_E = \sum_{K\in \omega_E} \int_K \nu \Delta(u_h-u) \sigma \mathrm{~d}x+ \int_K\nu\nabla_h(u_h-u)\cdot\nabla_h \sigma\mathrm{~d}x.
	\end{align}
	As $u$ solve the differential equation \eqref{3.SGBHEweak}, we achieve
	\begin{align}
		\big([\![(\nu\nabla_h (u_h-u))\n]\!], \sigma\big)_E &= \sum_{K\in \omega_E} \int_K\big(f_h+\nu\Delta u_h- \alpha u_h^{\delta}\sum\limits_{i=1}^d\frac{\partial u_h}{\partial x_i}+\beta u_h(1-u_h^{\delta})(u_h^{\delta}-\gamma)\big) \sigma\mathrm{~d}x\\&\quad+
		\sum_{K\in \omega_E} \int_K \Big( \alpha u_h^{\delta}\sum\limits_{i=1}^d\frac{\partial u_h}{\partial x_i}- \alpha u^{\delta}\sum\limits_{i=1}^d\frac{\partial u}{\partial x_i}\Big)\sigma\mathrm{~d}x \\&\quad +	\sum_{K\in \omega_E} \int_K \Big( -\beta u_h(1-u_h^{\delta})(u_h^{\delta}-\gamma)+\beta u(1-u^{\delta})(u^{\delta}-\gamma)\Big)\sigma \mathrm{~d}x\\&\quad+ 
		\sum_{K\in \omega_E} \int_K (f-f_h) \sigma\mathrm{~d}x := J_1 + J_2+ J_3 + J_4.
	\end{align}
	To estimate $J_1$, using Cauchy-Schwarz inequality and Lemma \ref{lemma3.13}, we obtain 
	\begin{align}
		J_1 &= \sum_{K\in \omega_E} \int_K\big(f_h+\nu\Delta u_h- \alpha u_h^{\delta}\sum\limits_{i=1}^d\frac{\partial u_h}{\partial x_i}+\beta u_h(1-u_h^{\delta})(u_h^{\delta}-\gamma)\big) \sigma\mathrm{~d}x =  \sum_{K\in \omega_E} \int_K R_K\sigma \mathrm{~d}x \\&\leq \Big(\sum_{K\in\omega_E}h_K^2\|R_K\|_{\L^2(K)}^2\Big)^{\frac{1}{2}}\Big(\sum_{K\in\omega_E}h_K^{-2}\|\sigma\|_{\L^2(K)}^2\Big)^{\frac{1}{2}}\\&\leq \left(|\!|\!|u_h-u|\!|\!| + \Big(\sum_{K \in \mathcal{T}_h} h_K^2\|f-f_h\|^2_{\L^2(K)}\Big)^{1 / 2}\right)\Big(\sum_{E \in \mathcal{E}_h}	\zeta_{R_E}^2\Big)^{\frac{1}{2}},
	\end{align}
	where, we have used \eqref{tau} and \eqref{3.3.12.3}. Using integration by parts over $\omega_E$, Taylor's formula for the advection term as done in \eqref{3.b1} yields
	\begin{align}
		J_2 = 	\sum_{K\in \omega_E} \int_K \Big( \alpha u_h^{\delta}\sum\limits_{i=1}^d\frac{\partial u_h}{\partial x_i}- \alpha u^{\delta}\sum\limits_{i=1}^d\frac{\partial u}{\partial x_i}\Big)\sigma\mathrm{~d}x&= 	\sum_{K\in \omega_E} \int_K \Big( \frac{\alpha}{\delta +1} (u_h^{\delta+1}- u^{\delta+1})\Big)\nabla_h\sigma\mathrm{~d}x \\&\leq C|\!|\!|u_h-u|\!|\!| \Big(\sum_{K \in \omega_E }|\!|\!|\sigma|\!|\!|_{\omega_E}^2\Big)^{\frac{1}{2}}
		\\&\leq C|\!|\!|u_h-u|\!|\!| \Big(\sum_{E \in \mathcal{E}_h}	\zeta_{R_E}^2\Big)^{\frac{1}{2}}.
	\end{align}
	Using estimates similar to \eqref{3.rc1}-\eqref{3.rc3} in $J_3$, we attain
	\begin{align}
		J_3 = \sum_{K\in \omega_E} \int_K \Big( -\beta u_h(1-u_h^{\delta})(u_h^{\delta}-\gamma)+\beta u(1-u^{\delta})(u^{\delta}-\gamma)\Big)\sigma \mathrm{~d}x \leq C  |\!|\!|u_h-u|\!|\!| \Big(\sum_{E \in \mathcal{E}_h}	\zeta_{R_E}^2\Big)^{\frac{1}{2}}.
	\end{align}
	and finally using Cauchy-Schwarz inequality, $J_4$ can be estimated as 
	\begin{align}
		J_4 = 	\sum_{K\in \omega_E} \int_K (f-f_h) \sigma\mathrm{~d}x & \leq \Big(\sum_{K \in \mathcal{T}_h} h_K^2\|f-f_h\|^2_{\L^2(K)}\Big)^{1 / 2}\Big(\sum_{K\in\omega_E}h_K^{-2}\|\sigma\|_{\L^2(K)}^2\Big)^{\frac{1}{2}}\\&\leq \Big(\sum_{K \in \mathcal{T}_h} h_K^2\|f-f_h\|^2_{\L^2(K)}\Big)^{1 / 2}\Big(\sum_{E \in \mathcal{E}_h}	\zeta_{R_E}^2\Big)^{\frac{1}{2}}.
	\end{align} 
	By consolidating these estimates, we attain the intended outcome.
\end{proof}

Finally, we state the main theorem for the efficiency of our estimator.
\begin{theorem}
	Let $u$ and $u_h$ denote the solution of \eqref{3.SGBHEweak} and its Discontinuous Galerkin (DG) approximation defined in \eqref{3.SGBHE}, respectively. The error estimator $\zeta$ from \eqref{3.estimator} is bounded as follows:
	\begin{align}
		\zeta \leq C(|\!|\!|u-u_h|\!|\!| +\mathcal{F}).
	\end{align}
\end{theorem}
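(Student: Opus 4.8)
The plan is to split $\zeta^2 = \sum_{K\in\mathcal{T}_h}\zeta_K^2 = \sum_{K\in\mathcal{T}_h}\big(\zeta_{R_K}^2 + \zeta_{E_K}^2 + \zeta_{J_K}^2\big)$ into its three constituent sums and to control each one separately, noting that the two residual contributions have essentially been dealt with in the preceding lemmas, leaving only the jump contribution to be handled by a short direct argument.

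First, for the element residual term, Lemma \ref{lemma3.13} already gives $\big(\sum_{K\in\mathcal{T}_h}\zeta_{R_K}^2\big)^{1/2}\le C\big(|\!|\!|u-u_h|\!|\!| + \mathcal{F}\big)$, and for the edge residual term Lemma \ref{lemma3.14} gives the analogous bound $\big(\sum_{E\in\mathcal{E}_h}\zeta_{E_K}^2\big)^{1/2}\le C\big(|\!|\!|u-u_h|\!|\!| + \mathcal{F}\big)$. I would emphasise that these two lemmas carry the genuine technical content of the efficiency proof: they are obtained by testing $R_K$ and $R_E$ against interior and edge bubble functions $b_K,b_E$ (using the properties collected in Lemma \ref{lemma3.12}), integrating by parts against the exact equation \eqref{3.SGBHEweak}, and absorbing the differences of the nonlinear advection and reaction terms via Taylor's formula, Hölder's inequality, and the Sobolev embeddings exactly as in \eqref{3.b1}--\eqref{3.rc3} together with the stability estimates from Remark \ref{rhs}.

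Second, for the jump contribution $\sum_{K\in\mathcal{T}_h}\zeta_{J_K}^2 = \sum_{E\in\mathcal{E}_h}\mathfrak{K}_h\|[\![u_h]\!]\|_{\L^2(E)}^2$, I would use that the exact solution $u\in\H_0^1(\Omega)$ is globally continuous and vanishes on $\partial\Omega$, so $[\![u]\!]=0$ on every $E\in\mathcal{E}^i_h$ and on every boundary edge; hence $[\![u_h]\!] = [\![u_h-u]\!]$ on each $E\in\mathcal{E}_h$. Applying the definition \eqref{3.dgnorm} of the DG norm to the broken-$\H^1$ function $u_h-u$ then yields
\[
\sum_{K\in\mathcal{T}_h}\zeta_{J_K}^2 = \sum_{E\in\mathcal{E}_h}\mathfrak{K}_h\|[\![u_h-u]\!]\|_{\L^2(E)}^2 \le |\!|\!|u-u_h|\!|\!|^2 .
\]

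Finally, combining the three estimates and using $(a+b)^2\le 2a^2+2b^2$ gives $\zeta^2 \le C\big(|\!|\!|u-u_h|\!|\!| + \mathcal{F}\big)^2$, and taking square roots produces the claimed bound. I do not expect any real obstacle at this stage: the heavy lifting (the bubble-function lower bounds and the treatment of the nonlinear terms) is already encapsulated in Lemmas \ref{lemma3.13} and \ref{lemma3.14}, and the jump term is essentially free once one invokes the continuity of the exact solution. The only point requiring a little care is notational — reconciling the edge-residual estimator written $\zeta_{R_E}$ in Lemma \ref{lemma3.14} with $\zeta_{E_K}$ in \eqref{3.sgbheest} — rather than mathematical.
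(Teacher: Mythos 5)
Your proposal is correct and follows essentially the same route as the paper: the element and edge residual bounds are taken directly from Lemmas \ref{lemma3.13} and \ref{lemma3.14}, and the jump term is absorbed into $|\!|\!|u-u_h|\!|\!|$ by observing that $[\![u]\!]=0$ for the exact solution $u\in\H_0^1(\Omega)$. Your treatment of the jump contribution is in fact slightly more explicit than the paper's one-line remark, but mathematically identical.
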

\begin{proof}
	First, note that $[\![ u]\!]= 0,$ which implies that
	$$	\zeta_{J_K} \leq C|\!|\!|u-u_h|\!|\!| .$$
	and then combining the Lemma \ref{lemma3.13} and \ref{lemma3.14}, we attain the required result. 
\end{proof}
\section{ Semi-discrete scheme for GBHE with weakly singular kernel}\setcounter{equation}{0}\label{sec4}
In this section, we examine the GBHE with weakly singular kernel \eqref{3.weakGBHE}. Initially, we compute the $\L^2$ error estimates for the semi-discrete DGFEM. In this section for the simplicity of notation, we write $(u_h)_t^*(t) = u_{h,t}^*(t)$. 
\subsection{$\L^2$-Error estimates for GBHE with weakly singular kernel }
Let $u(t)$ represent the weak solution of the GBHE with a weakly singular kernel as defined in equation \eqref{3.weakGBHE}. Let $u_h(t)$ denote the dG solution outlined in equation \eqref{2.dgweakform}. Utilizing the projection $u^*_h(t)$ defined in \eqref{3.Aproj} for any $t$ within the interval $[0,T]$, we can decompose our error term $e(t) = u(t)-u_h(t)$ into two components:  $e(t) = u(t)-u_h(t) = (u(t)-u_h^*(t))+ (u_h^*(t)-u_h(t))$.
The following estimates holds for the first term.
\begin{lemma}\label{3.lemma4.1}
	Let $u(t), u_t(t) \in \H^{p+1}(\Omega)$, for $p\in[1,k]$ and $\omega(t) = u(t)-u_h^*(t)$, for any $t\in [0,t]$. There exists a constant $C>0$, such that 
	\begin{align}
		|\!|\!|\omega_t(t)	|\!|\!|&\leq Ch^p|u_t(t)|_{\H^{p+1}},\\
		\|\omega_t(t)\|_{\L^2}&\leq  Ch^{p+1}|u_t(t)|_{\H^{p+1}}.
	\end{align}
\end{lemma}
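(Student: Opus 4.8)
The plan is to reduce the bounds for $\omega_t(t) = u_t(t) - u_{h,t}^*(t)$ to the already-established bounds for $\omega(t) = u(t) - u_h^*(t)$ from Lemmas~\ref{3.Lemma3.1} and \ref{3.Lemma 3.5}, by differentiating the defining relation of the $A_h$-projection in time. Recall that $u_h^*(t)$ satisfies $a_{DG}(u_h^*(t),\phi_h) = a_{DG}(u(t),\phi_h)$ for all $\phi_h \in V_h \cap \L^\infty(\Omega)$. Since $a_{DG}(\cdot,\cdot)$ is a bounded bilinear form independent of $t$, and since $t \mapsto u(t)$ is (assumed) differentiable into $\H^{p+1}(\Omega)$, one differentiates this identity with respect to $t$ to obtain $a_{DG}(u_{h,t}^*(t),\phi_h) = a_{DG}(u_t(t),\phi_h)$ for all $\phi_h \in V_h \cap \L^\infty(\Omega)$. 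In other words, $u_{h,t}^*(t)$ is precisely the $A_h$-projection of $u_t(t)$. (One should note in passing that differentiation of the projection is legitimate because the projection operator is linear and bounded, so it commutes with the time derivative; the regularity $u_t(t) \in \H^{p+1}(\Omega)$ ensures the right-hand side is well-defined.)

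Once this observation is in place, the two claimed estimates follow immediately by applying Lemma~\ref{3.Lemma3.1} and Lemma~\ref{3.Lemma 3.5} verbatim with $u$ replaced by $u_t(t)$: the first gives $|\!|\!|\omega_t(t)|\!|\!| = |\!|\!|u_t(t) - u_{h,t}^*(t)|\!|\!| \leq Ch^p|u_t(t)|_{\H^{p+1}}$, and the second gives $\|\omega_t(t)\|_{\L^2} \leq Ch^{p+1}|u_t(t)|_{\H^{p+1}}$. The constant $C$ is the same as in those lemmas since $a_{DG}$, its coercivity constant $C_c$, and the interpolation constants are all $t$-independent.

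I expect the only real subtlety — and hence the main obstacle — to be the justification that $u_{h,t}^*$ exists and equals the $A_h$-projection of $u_t$; that is, interchanging $\partial_t$ with the projection. This requires that the solution possess enough temporal regularity (which is supplied by Theorem~\ref{2M.thm33}, guaranteeing $\partial_t u \in \L^2(0,T;\H_0^1(\Omega))$ and, under the stronger hypothesis, $u \in \L^\infty(0,T;\H^2(\Omega))$, together with the hypothesis $u_t(t) \in \H^{p+1}(\Omega)$ stated in the lemma) and a short argument using the difference quotients: writing $\phi_h = (u_h^*(t+s) - u_h^*(t))/s$, one checks $a_{DG}(\phi_h,\psi_h) = a_{DG}((u(t+s)-u(t))/s,\psi_h)$, passes to the limit $s\to 0$ using coercivity to control $|\!|\!|\phi_h|\!|\!|$, and identifies the limit. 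Everything else is a direct invocation of the previously proved lemmas, so the proof is genuinely short; in the paper it may even be stated as "the proof follows that of Lemmas~\ref{3.Lemma3.1} and \ref{3.Lemma 3.5} after differentiating \eqref{3.Aproj} in time."
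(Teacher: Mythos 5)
Your proposal is correct and follows essentially the same route as the paper: differentiate the $A_h$-projection identity \eqref{3.Aproj} in time to conclude $a_{DG}(\omega_t(t),\phi_h)=0$, so that $u_{h,t}^*$ is the $A_h$-projection of $u_t$, and then obtain the energy-norm bound by the triangle inequality with $\Pi_h u_t$ exactly as in Lemma~\ref{3.Lemma3.1}, with the $\L^2$ bound as in Lemma~\ref{3.Lemma 3.5} (the paper cites \cite[Lemma 4.2]{DFK} for the latter). Your extra difference-quotient justification for interchanging $\partial_t$ with the projection is a refinement the paper passes over silently, not a different argument.
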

\begin{proof}
	\bl{For any $t \in [0, T]$, let $\omega_t(t) = u_t(t) - u_{h,t}^*(t)$. Recall, the $A_h$-projection \eqref{3.Aproj} is defined as:  
		$$
		a_{DG}(u_h^*, \phi_h) = a_{DG}(u, \phi_h) \quad \forall~~ \phi_h \in V_h \cap L^\infty(\Omega),
		$$ 
		which leads to  
		\begin{align}  
			0 = \frac{\mathrm{d}}{\mathrm{d}t} (a_{DG}(u(t) - u_h^*(t), \phi_h(t))) = a_{DG}\left(\frac{\partial (u(t) - u_h^*(t))}{\partial t}, \phi_h(t)\right).  
		\end{align}  
		This establishes the time evolution of the error $\omega_t(t)$ in terms of the $A_h$-projection.
	}
	So, we have 
	$$a_{DG}(\omega_t(t), \phi_h(t)) = 0 \quad\forall ~~\phi_h(t)\in V_h.$$
	Now, for any $t\in[0,T]$, denote the $\L^2$ projection of $u_t$ as
	$$ \Pi_h u_t(t) = \big(\Pi_hu(t))_t = \hat{u}_t(t) \in V_h.$$
	Using triangle inequality and the method  similar to Lemma \ref{3.Lemma3.1}, we attain  
	$$|\!|\!|\omega_t|\!|\!| = |\!|\!|u_t-u_{h,t}^*|\!|\!| \leq |\!|\!|u_t-\hat{u}_t|\!|\!| + |\!|\!|\hat{u}_t - u_{h,t}^*|\!|\!|,$$
	where the first term is estimated using $\L^2-$projection and the subsequent term using \eqref{3.55}. The estimate of $\omega_t(t)$ have been proved in \cite[Lemma 4.2]{DFK}.
\end{proof}		
\begin{theorem}\label{3.L2sd}
	Let $u$ be the exact solution and $u_h$ be the approximated solution. For $u(t)\in \H^{p+1}(\Omega)$ and $u_t(t)\in \H^{p+1}(\Omega),$ the error $e = u-u_h$ satisfies
	$$\|e(t)\|_{\L^{\infty}(0,T;\L^2(\Omega))} + h \int_0^t|\!|\!| e(\tau)|\!|\!| \mathrm{~d}\tau \leq \|e(0)\|_{\L^2}^2+ Ch^{p+1}\Big(|u(t)|_{\H^{p+1}}^2+|u_t(t)|_{\H^{p+1}}^2\Big),$$
	\bl{where $C$ is a constant independent of $h$, $k$ is the degree of the approximating polynomial, and $p \in [1, k]$.}
\end{theorem}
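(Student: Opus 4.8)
The plan is to mirror the structure used for the stationary case in Lemma \ref{3.A1}: write $e(t)=\omega(t)+\xi(t)$ with $\omega=u-u_h^*$ and $\xi=u_h^*-u_h\in V_h$, where $u_h^*(t)$ is the $A_h$-projection \eqref{3.Aproj} of $u(t)$ (defined for each $t$, and differentiable in $t$ with $u_{h,t}^*$ the $A_h$-projection of $u_t$). The $\omega$-part is already controlled: Lemma \ref{3.Lemma3.1}, Lemma \ref{3.Lemma 3.5} and Lemma \ref{3.lemma4.1} give $\|\omega(t)\|_{\L^2}\le Ch^{p+1}|u(t)|_{\H^{p+1}}$, $|\!|\!|\omega(t)|\!|\!|\le Ch^{p}|u(t)|_{\H^{p+1}}$, and $\|\omega_t(t)\|_{\L^2}\le Ch^{p+1}|u_t(t)|_{\H^{p+1}}$. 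So the whole task is to bound $\xi$ in $\L^\infty(0,T;\L^2(\Omega))$ and $\int_0^t|\!|\!|\xi|\!|\!|^2$; the triangle inequality together with a Cauchy--Schwarz in time (to convert the bound on $\int_0^t|\!|\!|\xi|\!|\!|^2$ into the $h\int_0^t|\!|\!|e|\!|\!|$ term appearing in the statement) then yields the stated estimate.

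To obtain an equation for $\xi$, I would first use the regularity of Theorem \ref{2M.thm33} ($u\in\L^\infty(0,T;\H^2(\Omega))$, $u_t\in\L^\infty(0,T;\L^2(\Omega))$): element-wise integration by parts shows that the exact solution satisfies \eqref{2.dgweakform} for \emph{every} $\chi\in V_h$ (the jump contributions vanish because $[\![u]\!]=0$ and $[\![\partial u/\partial\n]\!]=0$, and the normalization $\tfrac1{\delta+2}$ in \eqref{3.bdg} is exactly what makes $b_{DG}$ consistent with $b(u,u,\cdot)$, cf.\ \cite{GBHE2}). Subtracting \eqref{2.dgweakform}, inserting $u_h^*$ into the nonlinear terms, and using $a_{DG}(\omega,\phi_h)=0$ for all $\phi_h\in V_h$ — hence $a_{DG}(\omega_t,\phi_h)=0$ after differentiating in $t$ (as in Lemma \ref{3.lemma4.1}), and hence also $\int_0^tK(t-s)a_{DG}(\omega(s),\phi_h)\,\mathrm{~d}s=0$ — one arrives at
\begin{align*}
(\xi_t,\chi)+\big[\mathcal{A}_{DG}(u_h^*,\chi)-\mathcal{A}_{DG}(u_h,\chi)\big]&+\eta\int_0^tK(t-s)\,a_{DG}(\xi(s),\chi)\,\mathrm{~d}s\\
&=-(\omega_t,\chi)-\alpha\big[b_{DG}(u,u,\chi)-b_{DG}(u_h^*,u_h^*,\chi)\big]+\beta\big(c(u)-c(u_h^*),\chi\big)
\end{align*}
for all $\chi\in V_h$.

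Next I would test with $\chi=\xi(t)$. On the left: $(\xi_t,\xi)=\tfrac12\tfrac{\mathrm{d}}{\mathrm{d}t}\|\xi\|_{\L^2}^2$; the bracket $\mathcal{A}_{DG}(u_h^*,\xi)-\mathcal{A}_{DG}(u_h,\xi)$ is bounded below by Lemma \ref{lemma3.2}, producing $\tfrac\nu2|\!|\!|\xi|\!|\!|^2$, the nonnegative $\delta$-weighted terms, and a term $-C\|\xi\|_{\L^2}^2$ whose constant is now harmless since it is absorbed by Gronwall (so, unlike in Lemma \ref{3.A1}, no restriction on $\nu,\alpha,\beta,\gamma,\delta$ is needed); and, crucially, $\eta\int_0^t\!\big(\int_0^sK(s-\tau)a_{DG}(\xi(\tau),\xi(s))\,\mathrm{~d}\tau\big)\mathrm{~d}s\ge0$ by the positive-kernel hypothesis \eqref{pk}. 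On the right: $-(\omega_t,\xi)$ is handled by Cauchy--Schwarz, Young, and Lemma \ref{3.lemma4.1}; the advection difference $-\alpha[b_{DG}(u,u,\xi)-b_{DG}(u_h^*,u_h^*,\xi)]$ and the reaction difference $\beta(c(u)-c(u_h^*),\xi)$ are estimated exactly as the terms $J_1$ and $J_3,J_4,J_5$ in the proof of Lemma \ref{3.A1} (Taylor's formula, Hölder, Young, the discrete Sobolev embedding, together with the bounds $\|u(t)\|_{\L^\infty}\le C$, $\|u_h^*(t)\|_{\L^\infty}\le C$, $|\!|\!|u_h^*(t)|\!|\!|\le|\!|\!|u(t)|\!|\!|\le C$ that hold uniformly in $t$ by Theorem \ref{2M.thm33} and Remark \ref{rhs}), giving a small multiple of $|\!|\!|\xi|\!|\!|^2$, a $C\|\xi\|_{\L^2}^2$ term, and $C\|\omega\|_{\L^2}^2$. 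Collecting terms produces, for some $c_0>0$, a differential inequality of the form $\tfrac{\mathrm{d}}{\mathrm{d}t}\|\xi\|_{\L^2}^2+c_0|\!|\!|\xi|\!|\!|^2+(\text{nonnegative memory term})\le C\|\xi\|_{\L^2}^2+Ch^{2(p+1)}(|u(t)|_{\H^{p+1}}^2+|u_t(t)|_{\H^{p+1}}^2)$. Integrating on $[0,t]$, discarding the memory integral, and applying Gronwall's inequality bounds $\|\xi(t)\|_{\L^2}^2+\int_0^t|\!|\!|\xi(s)|\!|\!|^2\,\mathrm{~d}s$ by $C\big(\|\xi(0)\|_{\L^2}^2+h^{2(p+1)}\int_0^t(|u|_{\H^{p+1}}^2+|u_t|_{\H^{p+1}}^2)\big)$, and the triangle inequality with $\|\xi(0)\|_{\L^2}\le\|e(0)\|_{\L^2}+\|\omega(0)\|_{\L^2}$ finishes the proof.

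The step I expect to be the main obstacle is the memory term: one has to show $\eta\int_0^t\int_0^sK(s-\tau)a_{DG}(\xi(\tau),\xi(s))\,\mathrm{~d}\tau\,\mathrm{~d}s\ge0$, which is \emph{not} a literal instance of \eqref{pk} because $a_{DG}$ is a bilinear form, not the product of scalar values of a single function. I would resolve it by noting that $a_{DG}(\cdot,\cdot)$ is symmetric and positive definite on the finite-dimensional space $V_h$ (by coercivity \eqref{3.acor}), diagonalizing it, writing $a_{DG}(\xi(\tau),\xi(s))=\sum_j\lambda_j\,\hat\xi_j(\tau)\hat\xi_j(s)$ with $\lambda_j>0$ and $\hat\xi_j(\cdot)\in\L^2(0,t)$, and then applying \eqref{pk} — which holds on every subinterval $[0,t]$ with $t\le T$ — coordinatewise. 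A secondary, routine, point is verifying that every $\L^\infty$- and $\L^{4\delta}$-norm of $u$ and $u_h^*$ entering the nonlinear estimates is bounded uniformly in $t$ purely in terms of the data, so that the Gronwall constant is genuinely independent of $h$; this is precisely where the regularity hypotheses of the theorem and the stability of the $A_h$-projection are used.
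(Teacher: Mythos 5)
Your proposal follows essentially the same route as the paper's proof: the same splitting $e=\omega+\xi$ via the $A_h$-projection, testing the error equation with $\chi=\xi$, the lower bound from Lemma \ref{lemma3.2} together with the $J_1$--$J_5$-type nonlinear estimates of Lemma \ref{3.A1}, positivity of the kernel, Gronwall, and then the triangle inequality with Lemmas \ref{3.Lemma 3.5}, \ref{3.lemma4.1} and Remark \ref{rhs}. Your extra care in justifying the sign of the convolution term (diagonalizing the symmetric, coercive $a_{DG}$ on $V_h$ so that \eqref{pk} applies componentwise) is a legitimate refinement of a step the paper invokes without comment, but it does not change the argument.
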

\begin{proof}
	Subtracting \eqref{3.weakGBHE} from \eqref{2.dgweakform}, we attain
	\begin{align}\label{7p2}
		&	\langle\partial_t(u(t)-u_h(t)),\chi\rangle+\bl{\nu a_{DG}(u(t)-u_h(t), \chi)} + \eta( K*a_{DG}( u(t)-u_h(t)),  \chi) \\&+\alpha[b(u(t),u(t),\chi)-b_{DG}(u_h(t),u_h(t),\chi)]-\beta[c(u(t),\chi)-c(u_h(t),\chi)]=0,
	\end{align}
	using $\omega = u-u_h^*, ~\xi = u_h^*-u_h$, we have 
	\begin{align}
		&	(\partial_t\xi, \chi) +\nu a_{DG}(\xi, \chi) + \eta( K*a_{DG}(\xi,  \chi))+\alpha (b_{DG}(u_h^*,u_h^*,\chi)-b_{DG}(u_h,u_h,\chi))\\&\quad - \beta((c(u_h^*),\chi)+ (c(u_h),\chi))\nonumber\\&= -\alpha[b_{DG}(u_h(t),u_h(t),\chi)-b(u(t),u(t),\chi)]+\beta[c(u_h(t),\chi)-c(u(t),\chi)]- (\partial_t\omega,\xi)- a_{DG}(\omega, \xi)\nonumber\\&= -\alpha[b(u_h^*(t),u_h^*(t),\chi)-b(u(t),u(t),\chi)]+\beta[c(u_h^*(t),\chi)-c(u(t),\chi)]- (\partial_t\omega,\xi)
	\end{align}
	Setting $\chi=\xi$ along with an estimate akin to that in Lemma \ref{3.A1}, we obtain
	\begin{align}
		&	\frac{1}{2}\frac{d}{dt}\|\xi\|_{\L^2}^2+\frac{\nu}{2}|\!|\!| \xi|\!|\!|^2 +\frac{\beta}{4}(\|{u}^{\delta}_h\xi\|_{\L^2}^2+\|(u_h^*)^{\delta}\xi\|_{\L^2}^2)+ \eta( K*a_{DG}(\xi,  \xi))\nonumber
		\\&\quad +\left(\beta\gamma-C(\beta,\alpha,\delta) - C(\alpha,\nu)\Big(\|u_h\|^{\frac{8\delta}{4-d}}_{\L^{4\delta}}+\|u_h^*\|^{\frac{8\delta}{4-d}}_{\L^{4\delta}}\Big)\right)\|\xi\|_{\L^2}^2\\&\leq\bigg( \frac{2^{2\delta}\alpha^2\delta^2}{\nu(\delta+2)^2}\left(\|{u_h^*}\|_{\L^{\infty}}^{2\delta}+\|{u}\|_{\L^{\infty}}^{2\delta}\right) + \frac{1}{\nu}2^{2\delta}\beta^2(1+\gamma)^2(\delta+1)^2\left(\|{u_h^*}\|^{2\delta}_{\L^{4\delta}}+\|{u}\|^{2\delta}_{\L^{4\delta}}\right) \\&\quad+  2\beta\gamma +\frac{2^{4\delta}(2\delta+1)^2\beta^2}{\nu}\left(\|u_h^*\|_{\L^{\infty}}^{4\delta}+\|u\|_{\L^{\infty}}^{4\delta}\right)\bigg)\|\omega\|_{\L^2}^2 +\|\omega_t\|_{\L^2}^2+\|\xi\|_{\L^2(\Omega)}^2 .
	\end{align}
	Moreover, we have 
	\begin{align*}
		\frac{d}{dt}\|\xi\|_{\L^2}^2+ \nu|\!|\!| \xi|\!|\!|^2+ \eta( K*a_{DG}(\xi,  \xi))&\leq C(\alpha,\delta,\nu,\gamma,\|u_h^*\|_{\L^{\infty}},\|u\|_{\L^{\infty}}) \|\omega\|_{\L^2}^2 +\|\omega_t\|_{\L^2}^2\\&\quad+\left(C(\beta,\alpha,\delta) + C(\alpha,\nu)\Big(\|u_h\|^{\frac{8\delta}{4-d}}_{\L^{4\delta}}+\|(u_h^*)\|^{\frac{8\delta}{4-d}}_{\L^{4\delta}}\Big)\right)\|\xi\|_{\L^2}^2
	\end{align*}
	Integrating from $0$ to $T$, using Gronwall inequality and Lemma \ref{3.Lemma 3.5} with the positivity of the kernel \eqref{pk}, we achieve
	\begin{align*}
		&\|\xi\|_{\L^2}^2+ \int_0^t|\!|\!| \xi(\tau)|\!|\!| ^2\mathrm{d}\tau \\&\leq \exp\bigg\{C(\alpha,\delta,\nu,\gamma,\|u_h^*\|_{\L^{\infty}},\|u\|_{\L^{\infty}})+C(\beta,\alpha,\delta) + C(\alpha,\nu)\Big(\|u_h\|^{\frac{8\delta}{4-d}}_{\L^{4\delta}}+\|u_h^*\|^{\frac{8\delta}{4-d}}_{\L^{4\delta}}\Big) \bigg\}\\&\quad \times\bigg(\|\xi(0)\|_{\L^2} +  h^{2(p+1)}\Big(|u|_{\H^{p+1}(\Omega)}^2+|u_t|_{\H^{p+1}(\Omega)}^2\Big) \bigg).
	\end{align*}
	Finally, by applying the triangle inequality, Lemma \ref{3.Lemma 3.5}, Lemma \ref{3.lemma4.1}, and Remark \ref{rhs} we achieve the desired outcome.
\end{proof}
\subsection{A posteriori error estimates }
In this section, we investigate a posteriori error estimates for the semi-discrete approximation of the GBHE with a weakly singular kernel. We employ the DGFEM for spatial discretization. \bl{For $ \tilde{u}$ defined in \eqref{3.4.1}, the error term is decomposed as follows: $e = u - u_h = u - \tilde{u} + \tilde{u} - u_h := \rho + \theta$}, where $\rho = u - \tilde{u}$ denotes the parabolic error, and $\theta = \tilde{u} - u_h$ signifies the elliptic error. To address the impact of the history arising from the presence of the memory term (weakly singular kernel), we introduce the error estimator in \eqref{3.sdesti}. In Lemma \ref{lemma4.1}, we delve into the reliability corresponding to the elliptic part. Finally, we define the estimator for the semi-discrete case in \eqref{4.theta} and establish the upper bound of the error in Theorem \ref{3.theorem4.3}.

\bl{The auxiliary problem, a form of elliptic reconstruction as discussed in \cite[Section 2]{MNo}, is defined as follows: Find $\tilde{u}\in \H_0^1(\Omega)$ such that 
	\begin{align}\label{3.4.1}
		\mathcal{A}(\tilde{u}(t),v) = (g_h(t),v), \quad \forall~~ v\in \H_0^1(\Omega).
\end{align}}
where, 
\begin{align}\label{3.g}
	(g_h(t),v) &=  (f(t),v)-(\partial_tu_h(t),v) - \eta\int_0^t K(t-\tau)a_{DG}^c(u_h(\tau), v) \mathrm{~d}\tau,
\end{align}
for all, $t\in[0,T]$, \bl{recall that $a_{DG}^c(\cdot,\cdot)$ have been defined in \eqref{3.adg1.1}}.
Furthermore, the semi-discrete weak formulation of the GBHE with a weakly singular kernel, represented by \eqref{2.dgweakform}, can be expressed as follows: Find $u_h\in \C(0,T; V_h)$ such that
\begin{align}\label{3.4.2}
	\mathcal{A}_{DG}(u_h,\chi) \bl{+ \eta\int_0^t K(t-\tau)N_{DG}(u_h(\tau), \chi) \mathrm{~d}\tau}= (g_h(t),\chi)  , \quad \forall~~ \chi\in V_h,
\end{align}
for each $t\in [0,T]$. 
Then, the residuals corresponding to the each element $K\in \mathcal{T}_h$ and edge $E\in \mathcal{E}_h$ are defined as 
\begin{itemize}
	\item Element-wise residual, 
	$
	R_K := \big\{f_h - \partial_tu_h+\eta \int\limits_0^tK(t-\tau)\Delta u_h(\tau) \mathrm{~d}\tau+\nu\Delta u_h- \alpha u_h^{\delta}\sum\limits_{i=1}^d\frac{\partial u_h}{\partial x_i}
	\\+\beta u_h(1-u_h^{\delta})(u_h^{\delta}-\gamma)\big\}\big|_K,\label{3.sder}
	$
	\item Edge-wise residual for diffusion term, 
	$R_{E_1} := \begin{cases}\frac{\nu}{2} [\![(\nabla_h u_h)\cdot\n]\!]  & \text { for } E \in \mathcal{E}^i_h,\\
		0 & \text { for } E \in\mathcal{E}^{\partial}_h,\end{cases}$
	\item Edge-wise residual for memory term, 
	$R_{E_2}:= \begin{cases}\frac{\eta}{2} \Big(\int\limits_0^tK(t-\tau)[\![(\nabla_h u_h(\tau))\cdot\n]\!]\mathrm{~d}\tau\Big) & \text { for } E \in \mathcal{E}_h^i,\\
		0 & \text { for } E \in \mathcal{E}^{\partial}_h.\end{cases}$
\end{itemize}
We introduce the element-wise error  estimator for the time dependent system as, 
$\zeta_K^2=\zeta_{R_K}^2+\zeta_{E_K}^2+\zeta_{J_K}^2$, where 
\begin{align}\label{3.sdesti}
	\nonumber&\zeta_{R_K}^2:=h_K^2\|R_K\|_{\L^2(K)}^2,\quad
	\zeta_{E_K}^2:=\sum\limits_{E \in \partial K} h_E(\left\|R_{E_1}\right\|_{\L^2(E)}^2+\left\|R_{E_2}\right\|_{\L^2(E)}^2),\\&\quad
	\zeta_{J_K}^2:=\sum\limits_{E \in \partial K} \mathfrak{K}_h\left(\Big\|[\![ u_h]\!]\Big\|_{\L^2(E)}^2 + \Big\| \int_0^t K(t-\tau)[\![u_h(\tau)]\!]\mathrm{~d}\tau\Big\|_{\L^2(E)}^2\right).
\end{align}
\begin{remark}
	\bl{The memory term encapsulates information from the past history, and the estimator effectively retains and incorporates this information continuously over time.}
\end{remark}
\begin{lemma}\label{lemma4.1}
	Let $\tilde{u}$ be the exact solution of \eqref{3.4.1}, $u_h^c$ be the conforming part of approximated solution as defined in \eqref{3.confsol} and \eqref{3.appara} holds. Define 
	$\theta^c =  \tilde{u}-u_h^c$, there holds
	$$ |\!|\!| \theta^c|\!|\!|\leq C(\zeta+ \mathcal{F}), $$
	where, $\zeta$
	and $\mathcal{F}$ are the global estimator and data approximation term, respectively, as defined in \eqref{3.estimator}.
\end{lemma}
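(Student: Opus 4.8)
The plan is to mirror the argument of Lemma \ref{lemma3.4} from the stationary case, replacing the stationary discrete problem by the elliptic reconstruction \eqref{3.4.1}–\eqref{3.4.2} and carrying the memory term along as an extra contribution. First I would apply Lemma \ref{lemma3.2} with $u_1 = \tilde u$, $u_2 = u_h^c$, and $\chi = \theta^c = \tilde u - u_h^c$, which under assumption \eqref{3.appara} gives a lower bound $|\!|\!|\theta^c|\!|\!|^2 \le \tilde{\mathcal{A}}_{DG}(\tilde u,\chi) - \tilde{\mathcal{A}}_{DG}(u_h^c,\chi)$, using that $\tilde{\mathcal{A}}_{DG}(\tilde u,\chi) = \mathcal{A}(\tilde u,\chi)$ since $\tilde u, \chi \in \H_0^1(\Omega)$. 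Then I would insert the Clément interpolant $I_h\chi$ from \eqref{3.clm}: since $u_h$ solves \eqref{3.4.2}, I have $\mathcal{A}(\tilde u, I_h\chi) = (g_h, I_h\chi) = \mathcal{A}_{DG}(u_h, I_h\chi) + \eta\int_0^t K(t-\tau)N_{DG}(u_h(\tau), I_h\chi)\,\mathrm d\tau$, which lets me rewrite $\mathcal{A}(\tilde u,\chi) - \tilde{\mathcal{A}}_{DG}(u_h^c,\chi)$ as $\mathcal{A}(\tilde u, \chi - I_h\chi) - \tilde{\mathcal{A}}_{DG}(u_h, \chi - I_h\chi) + \nu N_{DG}(u_h, I_h\chi) + \eta\int_0^t K(t-\tau)N_{DG}(u_h(\tau), I_h\chi)\,\mathrm d\tau + \bigl(\tilde{\mathcal{A}}_{DG}(u_h,\chi) - \tilde{\mathcal{A}}_{DG}(u_h^c,\chi)\bigr)$, using the relation $\mathcal{A}_{DG} = \tilde{\mathcal{A}}_{DG} + \nu N_{DG}$ on $V_h$.

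Next I would do the element-by-element integration by parts on the $a_{DG}^c(u_h, \chi - I_h\chi)$ term exactly as in \eqref{3.intpar}, and additionally on the memory term, integrating by parts the $\int_0^t K(t-\tau)\, a_{DG}^c(u_h(\tau), \chi - I_h\chi)\,\mathrm d\tau$ piece hidden inside $g_h$ (see \eqref{3.g}) to produce $\int_0^t K(t-\tau)\Delta u_h(\tau)\,\mathrm d\tau$ volume terms and the jump terms $[\![\nabla_h u_h(\tau)\cdot\mathbf n]\!]$ on interior edges. Collecting the volume contributions reproduces exactly the element residual $R_K$ from the semi-discrete definition (which includes the $-\partial_t u_h$ and the $\eta\int_0^t K(t-\tau)\Delta u_h(\tau)\,\mathrm d\tau$ terms), while the edge contributions split into $R_{E_1}$ (diffusion jump) and $R_{E_2}$ (memory jump). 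Then the standard estimates follow term by term: the data term by Cauchy–Schwarz with $\|f - f_h\|$ times \eqref{3.clmes1} (giving $\mathcal{F}$); the $N_{DG}$ terms by the inverse inequality as in \eqref{3.12}, giving $\zeta_{J_K}$ and its memory analogue from \eqref{3.sdesti}; the edge-jump terms via \eqref{3.clmes2} giving $\zeta_{E_K}$; and the $R_K$ term via \eqref{3.clmes1} giving $\zeta_{R_K}$. The remaining piece $\tilde{\mathcal{A}}_{DG}(u_h,\chi) - \tilde{\mathcal{A}}_{DG}(u_h^c,\chi)$ is bounded by $C\,|\!|\!|u_h^r|\!|\!|\,|\!|\!|\chi|\!|\!|$ by Lemma \ref{lemma3.6}, and $|\!|\!|u_h^r|\!|\!|$ is controlled by $\zeta_{J_K}$ via Lemma \ref{lemma3.3}. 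Dividing through by $|\!|\!|\theta^c|\!|\!|$ yields the claim.

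The main obstacle will be bookkeeping the memory term correctly through the integration by parts: one must verify that the weakly singular kernel factor $K(t-\tau)$ passes through the spatial integration by parts unchanged (it does, since it is spatially constant), that the resulting volume and edge pieces match the definitions of $R_K$, $R_{E_2}$, and the memory part of $\zeta_{J_K}$ in \eqref{3.sdesti} with the correct constants $\eta/2$, and that the time integral $\int_0^t$ can be pulled outside before applying Cauchy–Schwarz in space and estimate \eqref{3.clmes1}–\eqref{3.clmes2} (so one estimates $\|\int_0^t K(t-\tau)[\![\nabla_h u_h(\tau)\cdot\mathbf n]\!]\,\mathrm d\tau\|_{\L^2(E)}$ as a single quantity, which is exactly what $\zeta_{E_K}$ and $\zeta_{J_K}$ encode). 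A secondary point is that the nonlinear advection and reaction differences $\tilde{\mathcal{A}}_{DG}(u_h,\chi) - \tilde{\mathcal{A}}_{DG}(u_h^c,\chi)$ require the stability bounds in Remark \ref{rhs} on $\|u_h\|_{\L^q}$, which are available here since $u_h$ is the semi-discrete solution and the bound $|\!|\!|u_h|\!|\!|\le C\|f\|_{\L^2}$ holds uniformly in $t$; otherwise Lemma \ref{lemma3.6} applies verbatim. Everything else is a routine repetition of the stationary argument.
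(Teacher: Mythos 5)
Your proposal is correct and follows essentially the same route as the paper's proof: coercivity via Lemma \ref{lemma3.2} with $u_1=\tilde u$, $u_2=u_h^c$, insertion of the Cl\'ement interpolant together with the reconstruction identity $(g_h,I_h\chi)-\eta\int_0^t K(t-\tau)N_{DG}(u_h(\tau),I_h\chi)\,\mathrm d\tau=\mathcal{A}_{DG}(u_h,I_h\chi)$, elementwise integration by parts of both the diffusion and memory contributions to recover $R_K$, $R_{E_1}$, $R_{E_2}$ and the jump terms of \eqref{3.sdesti}, and finally Lemmas \ref{lemma3.6} and \ref{lemma3.3} for the nonconforming remainder. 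The bookkeeping points you flag (kernel passing through the spatial integration by parts, time integral kept inside the edge norms, stability bounds for the nonlinear differences) are exactly how the paper handles them, so no gap remains.
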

\begin{proof}
	An application of Lemma \ref{lemma3.2}, for $u_1 = \tilde{u}$, $u_2 = u_h^c$  
	with  $\chi = \tilde{u} -u_h^c,$ and the interpolation defined in \eqref{3.clm}, given as $( g_h,I_h \chi) - \eta\int_0^t K(t-\tau)N_{DG}(u_h(\tau), I_h\chi) \mathrm{~d}\tau= \mathcal{A}_{DG}(u_h, I_h\chi),$ yields 
	\begin{align}\label{3.sd7}
		\nonumber|\!|\!|{ \tilde{u}-u_h^c}|\!|\!|^2&\leq \tilde{\mathcal{A}}_{DG}(\tilde{u},\chi) -   \tilde{\mathcal{A}}_{DG}(u_h^c,\chi)\\&= \nonumber (g_h,\chi-I_h \chi)-   \tilde{\mathcal{A}}_{DG}(u_h,\chi-I_h \chi)+ N_{DG}(u_h,I_h\chi)\\&\quad\nonumber+ \eta\int_0^t K(t-\tau)N_{DG}(u_h(\tau),I_h \chi) \mathrm{~d}\tau +\tilde{\mathcal{A}}_{DG}(u_h,\chi) - \tilde{\mathcal{A}}_{DG}(u_h^c,\chi)  \\&\nonumber=(f-f_h,\chi-I_h \chi)+ (f_h,\chi-I_h \chi)-(\partial_t u_h, \chi-I_h \chi) -  \eta\int\limits_0^tK(t-\tau)(\nabla_h u_h, \nabla_h(\chi-I_h \chi))\mathrm{~d}\tau \\&\nonumber\quad - \nu a^c_{DG}(u_h,\chi-I_h \chi) - b_{DG}(u_h,u_h,\chi-I_h \chi) + (c(u_h),\chi-I_h \chi)+ N_{DG}(u_h,I_h\chi) \\&\quad+ \eta\int_0^t K(t-\tau)N_{DG}(u_h(\tau), I_h\chi) \mathrm{~d}\tau+\tilde{\mathcal{A}}_{DG}(u_h,\chi) - \tilde{\mathcal{A}}_{DG}(u_h^c,\chi).
	\end{align}
	Now, for the memory term $\int\limits_0^t K(t-\tau)(\nabla_h u_h(\tau),\nabla_h (\chi-I_h \chi)) \mathrm{~d}\tau$, using integration by parts, we have 
	\begin{align}\label{3.sd8}
		&\nonumber -\sum_{K\in \mathcal{T}_h}\int_0^t\int_{K}K(t-\tau)\nabla_h u_h\nabla_h (\chi-I_h \chi) \mathrm{~d}\tau\mathrm{~d}K\\&=  \sum_{K\in \mathcal{T}_h}\int_0^t\int_{K}K(t-\tau)\Delta u_h( \chi-I_h \chi) \mathrm{~d}\tau\mathrm{~d}K -  \sum_{K\in \mathcal{T}_h}\int_0^t\int_{\partial K} K(t-\tau)\frac{\partial u_h}{\partial \nu} (\chi-I_h \chi)  \mathrm{~d}\tau \mathrm{~d}S.
	\end{align}
	Substituting \eqref{3.sd8} back into \eqref{3.sd7} and using integration by parts for $a_{DG}^c(u_h,\chi-I_h\chi)$ term as done in \eqref{3.intpar}, yields
	\begin{align}\label{3.sd10}
		\nonumber&|\!|\!|{ \tilde{u}-u_h^c}|\!|\!|^2\\&\nonumber\leq (f-f_h,\chi-I_h \chi)+ (f_h,\chi-I_h \chi)-(\partial_t u_h, \chi-I_h \chi) +  \eta\int\limits_0^tK(t-\tau)(\Delta u_h, (\chi-I_h \chi))\mathrm{~d}\tau \\&\nonumber\quad+\nu (\Delta u_h, (\chi-I_h \chi))-  \sum_{K\in \mathcal{T}_h}\int_{\partial K}\frac{\partial u_h}{\partial \nu} (\chi-I_h \chi) \mathrm{~d}S 	-  \sum_{K\in \mathcal{T}_h}\int_0^t\int_{\partial K} K(t-\tau)\frac{\partial u_h}{\partial \nu} (\chi-I_h \chi)  \mathrm{~d}\tau \mathrm{~d}S \\&\nonumber\quad+ b_{DG}(u_h,u_h,\chi-I_h \chi) - (c(u_h),\chi-I_h \chi)+ N_{DG}(u_h,I_h\chi)+ \eta\int_0^t K(t-\tau)N_{DG}(u_h(\tau), I_h\chi) \mathrm{~d}\tau\\&\nonumber\quad+\tilde{\mathcal{A}}_{DG}(u_h,\chi) - \tilde{\mathcal{A}}_{DG}(u_h^c,\chi)
		\\&\nonumber=\sum_{K\in \mathcal{T}_h}\int_{K}\bigg(f_h - \partial_tu_h+\eta \int\limits_0^tK(t-\tau)\Delta u_h(\tau) \mathrm{~d}\tau+\nu\Delta u_h- \alpha u_h^{\delta}\sum\limits_{i=1}^d\frac{\partial u_h}{\partial x_i}
		+\beta u_h(1-u_h^{\delta})(u_h^{\delta}-\gamma)\bigg)\\&\nonumber\quad(\chi-I_h \chi) \mathrm{~d}K	+ (f-f_h,\chi-I_h \chi)  -\nu\sum_{K\in \mathcal{T}_h}\int_{\partial K}\frac{\partial u_h}{\partial \nu} (\chi-I_h \chi) \mathrm{~d}S + \nu N_{DG}(u_h,I_h\chi)	 \\&\quad\nonumber - \eta \sum_{K\in \mathcal{T}_h}\int_0^t\int_{\partial K} K(t-\tau)\frac{\partial u_h}{\partial \nu} (\chi-I_h \chi)  \mathrm{~d}\tau \mathrm{~d}S+ \eta\int_0^t K(t-\tau)N_{DG}(u_h(\tau), I_h\chi) \mathrm{~d}\tau\\&\quad+\tilde{\mathcal{A}}_{DG}(u_h,\chi) - \tilde{\mathcal{A}}_{DG}(u_h^c,\chi)
	\end{align}
	The initial component corresponds to the element-wise residual, denoted as $R_K$, while the memory term in \eqref{3.sd10} can be assessed utilizing the Cauchy-Schwarz inequality and the Cl\'ement interpolation \eqref{3.clmes2}, represented as
	\begin{align}\label{3.sd9}
		\nonumber&\bigg| \sum_{K\in \mathcal{T}_h}\int_0^t\int_{\partial K} K(t-\tau)\frac{\partial u_h}{\partial \nu} (\chi-I_h \chi)  \mathrm{~d}\tau \mathrm{~d}S\bigg| \\&= \bigg|\sum_{E\in \mathcal{E}}\int_0^t \int_EK(t-\tau) [\![\nabla_h u_h(\tau)]\!](\chi-I_h \chi) \mathrm{~d}\tau\mathrm{~d}S\bigg|\\&\nonumber\leq C \bigg(\sum_{E\in \mathcal{E}}h_E\| [\![K*\nabla_h u_h(s)]\!]\|_{\L^2(E)}^2\bigg)^{\frac{1}{2}}\bigg(\sum_{E\in \mathcal{E}}h_E^{-1} \| \chi-I_h \chi\|_{\L^2(E)}^2\bigg)^{\frac{1}{2}}\\& \leq C\bigg(\sum_{E\in \mathcal{E}} \zeta_{E_K}^2\bigg)^{\frac{1}{2}}|\!|\!| \chi|\!|\!|,
	\end{align}
	and  the term $\eta\int_0^t K(t-\tau)N_{DG}(u_h(\tau), \chi) \mathrm{~d}\tau$ can be estimated similar to \eqref{3.12} as
	\begin{align}
		\eta\int_0^t K(t-\tau)N_{DG}(u_h(\tau), I_h\chi) \mathrm{~d}\tau \leq C \bigg(\sum_{K\in \mathcal{T}_h} \zeta_{J_K}^2\bigg)^{\frac{1}{2}}|\!|\!| \chi|\!|\!|.
	\end{align}
	The estimation for the remaining terms can be directly derived from Lemma \ref{lemma3.6}.
\end{proof}
Let us now introduce the error indicator $\Theta$ for the semi-discrete scheme as
\begin{align}\label{4.theta}
	\Theta^2 = \|e(0)\|_{\L^2}^2 + \int_0^T\zeta^2 + \int_0^T\Theta_2^2 + \max\limits_{0\leq t\leq T}\Theta_3^2, 
\end{align}
where, 
$$\Theta_2^2 = \sum_{E\in\mathcal{E}_h}\|[\![\partial_t u_h]\!]\|_{\L^2(E)}, \qquad\Theta_3^2 = \sum_{E\in\mathcal{E}_h}\|[\![u_h]\!]\|_{\L^2(E)}.$$
and $\zeta$ is the global error estimator defined in the above lemma.
\begin{theorem}\label{3.theorem4.3}
	Let $u$ be the exact solution defined in \eqref{3.weakGBHE} and $u_h$ be the semi-discrete defined in \eqref{3.4.2}, then there exist a constant $C>0$ independent of $h,$ such that 
	\begin{align}
		\|e\|_{\L^{\infty}(0,T;\L^2(\Omega))}^2 \mathrm{~d}s + \int_0^T |\!|\!| e(t)|\!|\!|^2  \mathrm{~d}t \leq C \Theta,
	\end{align}
	where, $\Theta$ is defined in \eqref{4.theta}.
\end{theorem}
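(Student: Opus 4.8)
The plan is to follow the elliptic-reconstruction paradigm: split the error $e = u - u_h = \rho + \theta$ with $\rho = u - \tilde u$ (parabolic part) and $\theta = \tilde u - u_h$ (elliptic part), control $\theta$ by the already-established elliptic reliability bound (Lemma~\ref{lemma4.1}), and derive an energy estimate for $\rho$ by testing the error equation against $\rho$. First I would write the equation satisfied by $\rho$: subtracting the reconstruction problem \eqref{3.4.1} from the continuous weak formulation \eqref{3.weakGBHE}, and using $\partial_t u_h = \partial_t(\tilde u - \rho + \rho) $ together with the definition \eqref{3.g} of $g_h$, one obtains an identity of the form $\langle \partial_t \rho, v\rangle + \nu a^c_{DG}(\rho,v) + \eta (K * a^c_{DG}(\rho))(t),v) = \langle \partial_t\theta, v\rangle + \big(\text{nonlinear differences}\big) + \eta(K*\text{memory remainder})$, where the right-hand side collects $\mathcal A(u,v) - \mathcal A(\tilde u,v)$ expanded via \eqref{3.weakcont}. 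The key point is that $\tilde u$ satisfies an \emph{elliptic} problem with the \emph{same} semilinear form $\mathcal A$, so the diffusion--advection--reaction structure is preserved and Lemma~\ref{lemma3.2} (in the form \eqref{3.renon1} or \eqref{3.renon2}) applies to the nonlinear difference $\mathcal A(u,\rho) - \mathcal A(\tilde u,\rho)$.

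Next I would set $v = \rho$ and use coercivity. The time-derivative term gives $\tfrac12 \tfrac{d}{dt}\|\rho\|_{\L^2}^2$; the memory term $\eta(K * a^c_{DG}(\rho))(t),\rho)$, after integrating in $t$ over $[0,T]$, is nonnegative by the positive-kernel hypothesis \eqref{pk} applied componentwise to $\nabla_h\rho$ (this is exactly where \eqref{pk} is needed); the nonlinear terms are absorbed into $\tfrac\nu2|\!|\!|\rho|\!|\!|^2 + C\|\rho\|_{\L^2}^2$ via Lemma~\ref{lemma3.2}, where the $\L^{4\delta}$ or $\L^\infty$ norms of $u$ and $\tilde u$ are bounded by data through Theorem~\ref{2M.thm33} and the Sobolev/stability estimates in Remark~\ref{rhs}. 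The term $\langle\partial_t\theta,\rho\rangle$ is treated by Cauchy--Schwarz and Young: $\langle\partial_t\theta,\rho\rangle \le \|\partial_t\theta\|_{\H^{-1}}|\!|\!|\rho|\!|\!| \le \tfrac\nu4|\!|\!|\rho|\!|\!|^2 + C\|\partial_t\theta\|_{\H^{-1}}^2$, and $\|\partial_t\theta\|$ is in turn controlled by differentiating the elliptic reconstruction in time and reusing Lemma~\ref{lemma4.1}'s machinery, producing the jump terms $\Theta_2$ (from $[\![\partial_t u_h]\!]$). Then I would integrate from $0$ to $t$, apply Grönwall's inequality to close on $\|\rho\|_{\L^2}^2 + \int_0^t|\!|\!|\rho|\!|\!|^2$, obtaining a bound by $\|\rho(0)\|_{\L^2}^2 + \int_0^T\zeta^2 + \int_0^T\Theta_2^2$, with the constant depending only on data via the uniform bounds above.

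Finally I would reassemble: $\|e\|_{\L^\infty(0,T;\L^2)}^2 + \int_0^T|\!|\!|e|\!|\!|^2 \le 2(\|\rho\|_{\L^\infty(0,T;\L^2)}^2 + \int_0^T|\!|\!|\rho|\!|\!|^2) + 2(\|\theta\|_{\L^\infty(0,T;\L^2)}^2 + \int_0^T|\!|\!|\theta|\!|\!|^2)$. The $\theta$ contribution is handled by Lemma~\ref{lemma4.1} (giving $|\!|\!|\theta|\!|\!| \le |\!|\!|\theta^c|\!|\!| + |\!|\!|u_h^r|\!|\!| \le C(\zeta+\mathcal F)$ after Lemma~\ref{lemma3.3}) for the energy part, and a Poincaré/duality argument for the $\L^2$ part, where the $\max_{0\le t\le T}\Theta_3^2$ term appears from the jump $[\![u_h]\!]$ in the nonconforming reconstruction at each time. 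Collecting everything yields the bound by $C\Theta$ with $\Theta$ as in \eqref{4.theta}. The main obstacle I anticipate is the careful bookkeeping of the memory term on the right-hand side: after the elliptic reconstruction absorbs the $a^c_{DG}$ part of the memory into $g_h$, one must verify that the leftover $\eta\int_0^t K(t-\tau)N_{DG}(u_h(\tau),\cdot)$ contribution — together with its interaction with the nonconforming splitting $u_h = u_h^c + u_h^r$ — is genuinely controlled by the history-aware jump estimator in \eqref{3.sdesti}, uniformly in $t$, and that the Grönwall constant does not blow up as the kernel becomes singular ($\tau\to 0$ in \eqref{3.wk1}); this requires using only the positivity \eqref{pk} and boundedness in $\L^1(0,T)$ of $K$, not pointwise bounds.
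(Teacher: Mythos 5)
Your overall architecture (elliptic reconstruction, splitting $e=\rho+\theta$, kernel positivity \eqref{pk} plus $K\in\L^1(0,T)$ for the memory terms, Lemma \ref{lemma3.2} for the nonlinear monotonicity, Lemma \ref{lemma4.1} for the elliptic part) matches the paper, but there is a genuine gap in your treatment of the time-derivative term, and it is exactly the point where the paper's proof deviates from the classical Makridakis--Nochetto route you are following. Subtracting \eqref{3.4.1} from \eqref{3.weakGBHE} produces $\langle\partial_t(u-u_h),v\rangle$, so if you test with $v=\rho$ you must move $\langle\partial_t\theta,\rho\rangle$ to the right-hand side and then control $\|\partial_t\theta\|$ a posteriori. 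Your claim that this is obtained ``by differentiating the elliptic reconstruction in time and reusing Lemma \ref{lemma4.1}'s machinery, producing the jump terms $\Theta_2$'' does not hold for this problem: the reconstruction \eqref{3.4.1} is a \emph{nonlinear} elliptic problem with data $g_h$ from \eqref{3.g}, so its time derivative is a \emph{linearized} problem in $\partial_t\tilde u$ whose well-posedness/coercivity needs extra conditions of the type \eqref{3.appara}, and whose data $\partial_t g_h$ contains $\partial_t f$, $\partial_{tt}u_h$ and the time derivative of the weakly singular convolution $\int_0^t K(t-\tau)a^c_{DG}(u_h(\tau),\cdot)\,\mathrm{d}\tau$. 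An a posteriori bound for $\partial_t\theta$ therefore brings in element residuals of these time-differentiated quantities, none of which appear in $\Theta$ as defined in \eqref{4.theta}; $\Theta_2$ consists only of the jumps $[\![\partial_t u_h]\!]$, so your route cannot close the estimate $\le C\Theta$ without enlarging the estimator.

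The paper avoids $\partial_t\theta$ (and any time differentiation of $\tilde u$) altogether: it tests the error identity with the \emph{conforming error} $e^c=u-u_h^c$, writes $\partial_t(u-u_h)=\partial_t e^c-\partial_t u_h^r$, and so the only time-derivative remainder is $\partial_t u_h^r$, which is bounded directly by the jumps of $\partial_t u_h$, i.e.\ by $\Theta_2$. The elliptic error $\theta=\tilde u-u_h$ then enters only through undifferentiated terms ($\nu(\nabla_h\theta,\nabla_h e^c)$, the advection and reaction differences between $\tilde u$ and $u_h$, and the memory remainder), which after Young's inequality, kernel positivity, Young's convolution inequality and Gr\"onwall are controlled by $\int_0^T|\!|\!|\theta|\!|\!|^2$, $\int_0^T|\!|\!|u_h^r|\!|\!|^2$ and $\int_0^T\|\partial_t u_h^r\|_{\L^2}^2$, and finally by $\Theta$ via Lemma \ref{lemma4.1}, Lemma \ref{lemma3.3} and the triangle inequality (the $\max_t\Theta_3$ term absorbing $u_h^r$ in $\L^\infty(\L^2)$). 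If you want to keep your $\rho$-based energy argument, you would have to either prove a separate a posteriori bound for $\partial_t\theta$ for the linearized reconstruction (and correspondingly enlarge $\Theta$), or switch the test function to $e^c$ as the paper does; as written, the step bounding $\|\partial_t\theta\|_{\H^{-1}}$ by $\Theta_2$ is unjustified and is where the proof fails.
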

\begin{proof}
	Deriving from \eqref{3.weakGBHE} and \eqref{3.4.1}, we obtain:
	\begin{align}
		&\langle\partial_t(u(t)-u_h(t)),v\rangle +	\nu (\nabla (u(t)-\tilde{u}(t)),\nabla v)+\alpha \Big(b(u(t),u(t),v)-b(\tilde{u}(t),\tilde{u}(t),v)\Big) \\&\quad+ \eta\Big((K*\nabla u(\tau),\nabla v)- \int_0^t K(t-\tau)a_{DG}^c( u_h(\tau), v) \mathrm{~d}\tau\Big)-\beta\Big(\langle c(u(t)),v\rangle  - \langle c(\tilde{u}(t)),v\rangle\Big) =0,
	\end{align}
	for all $v\in \H_0^1(\Omega)$. 
	Consider $\rho = u - \tilde{u}$ and $\theta = \tilde{u} - u_h$. Utilizing the decomposition \eqref{3.confsol} and choosing $v = e^c = u - u_h^c$, we then proceed with:
	\begin{align}
		&(\partial_t e^c,e^c) +	\mathcal{\tilde{A}}_{DG}(u,e^c) - \mathcal{\tilde{A}}_{DG}(u^c_h,e^c) + \eta(K*\nabla_h e^c(\tau),\nabla_h e^c) \\&=(\partial_t u_h^r,e^c)+ \nu(\nabla_h \theta, \nabla_h e^c)+ \eta(K*\nabla_h u_h^r(\tau),\nabla_h e^c) 
		+ \alpha b(\tilde{u},\tilde{u},e^c) - \alpha b_{DG}(u_h,u_h,e^c)  \\&\quad+ \beta (c(\tilde{u}),e^c) - \beta ((c(u_h)),e^c) +
		a_{DG}^c(u_h-u^c_h,e^c) \\&\quad+ \alpha (b(u_h,u_h,e^c) -b(u_h^c,u_h^c,e^c) ) - (c(u_h) - c(u_h),e^c).
	\end{align}
	Using Lemma \ref{lemma3.2}, \ref{lemma3.6} and Cauchy-Schwarz inequality, we achieve
	\begin{align}\label{3.31}
		\nonumber&\frac{1}{2}\frac{\mathrm{d}}{\mathrm{d}t}\|e^c\|_{\L^2}^2 +  |\!|\!| e^c|\!|\!|^2 +  \eta(K*\nabla e^c(\tau),\nabla e^c)\\&\nonumber \leq \|\partial_t u_h^r\|_{\L^2}\|e^c\|_{\L^2}  + \nu \|\nabla_h \theta\|_{\L^2(\mathcal{T}_h)}\|\nabla_h e^c\|_{\L^2(\mathcal{T}_h)}+ \eta(K*\nabla_h u_h^r(\tau),\nabla_h e^c)\\&\quad 
		+ \alpha b(\tilde{u},\tilde{u},e^c) - \alpha b_{DG}(u_h,u_h,e^c)+ \beta (c(\tilde{u}),e^c) - \beta ((c(u_h)),e^c) + |\!|\!| u_h^r|\!|\!||\!|\!| e^c|\!|\!|.
	\end{align}
	Estimating the term $\alpha b(\tilde{u},\tilde{u},e^c) - \alpha b_{DG}(u_h,u_h,e^c)$ in a manner akin to \eqref{3.b1}, through the application of integration by parts and Taylor's formula, we obtain:
	\begin{align}
		\alpha	b(\tilde{u},\tilde{u},e^c) - \alpha b_{DG}(u_h,u_h,e^c) 
		& \leq \frac{2\alpha\delta}{(\delta+2)(\delta+1)}\sum_{i=1}^d \left(\tilde{u}^{\delta+1}-(u_h)^{\delta+1}, \frac{\partial e^c}{\partial{x_i}}\right)
		\\\nonumber
		& \leq \frac{2\alpha\delta}{(\delta+2)}\sum_{i=1}^d \left((\theta \tilde{u} + (1-\theta)u_h)^{\delta}(\tilde{u}-u_h), \frac{\partial e^c}{\partial{x_i}}\right)
		\\\nonumber
		&\leq  \frac{2^{\delta}\alpha\delta}{(\delta+2)}\bigg(\|\tilde{u}\|^{\delta}_{\L^{2(\delta+1)}}+\|u_h\|^{\delta}_{\L^{2(\delta+1)}}\bigg)\|\theta\|_{\L^{2(\delta+1)}}\|\nabla_h e^c\|_{\L^2(\mathcal{T}_h)}.
	\end{align}
	The reaction term can be approximated in a manner analogous to $J_2'$, as illustrated in equations \eqref{3.rc1} to \eqref{3.rc3}:
	\begin{align}
		&\beta (c(\tilde{u}),e^c) - \beta ((c(u_h)),e^c) \\&= -2\beta\gamma(\tilde{u}-u_h,e^c)+2\beta(1+\gamma)({\tilde{u}}^{\delta+1}-u_h^{\delta+1},e^c) -2\beta({\tilde{u}}^{2\delta+1}-u_h^{2\delta+1},e^c) \\& \leq 2\beta\gamma\|\theta\|_{\L^2}\|e^c\|_{\L^2} + 2^{\delta}\beta(1+\gamma)(\delta+1)\left(\|\tilde{u}\|^{(\delta+1)}_{\L^{2(\delta+1)}}+\|u_h\|^{(\delta+1)}_{\L^{2(\delta+1)}}\right)\|\theta\|_{\L^{2(\delta+1)}}\|e^c\|_{\L^2}\\&\quad+2^{2\delta}(2\delta+1)\beta\left(\|\tilde{u}\|_{\L^{2(\delta+1)}}^{\delta}+\|u_h\|_{\L^{2(\delta+1)}}^{\delta}\right)\|\theta\|_{\L^{2(\delta+1)}}\|e^c\|_{\L^{2(\delta+1)}}.
	\end{align}
	Substituting the above estimates in \eqref{3.31}, we achieve 
	\begin{align}\label{3.32}
		&\frac{1}{2}\frac{\mathrm{d}}{\mathrm{d}t}\|e^c\|_{\L^2}^2 +  |\!|\!| e^c|\!|\!|^2 +  \eta(K*\nabla e^c(\tau),\nabla e^c)\\& \leq \|\partial_t u_h^r\|_{\L^2}\|e^c\|_{\L^2} + \nu \|\nabla_h \theta\|_{\L^2(\mathcal{T}_h)}\|\nabla_h e^c\|_{\L^2(\mathcal{T}_h)}+ \eta(K*\nabla_h u_h^r(\tau),\nabla_h e^c)\\&\quad + \alpha b(\tilde{u},\tilde{u},e^c) - \alpha b_{DG}(u_h,u_h,e^c) + \beta (c(\tilde{u}),e^c) - \beta ((c(u_h)),e^c) + |\!|\!| u_h^r|\!|\!||\!|\!| e^c|\!|\!|\\& \leq \|\partial_t u_h^r\|_{\L^2}\|e^c\|_{\L^2}  + \nu \|\nabla_h \theta\|_{\L^2(\mathcal{T}_h)}\|\nabla_h e^c\|_{\L^2(\mathcal{T}_h)}+ \eta(K*\nabla_h u_h^r(\tau),\nabla_h e^c)
		\\&\quad + \frac{2^{\delta}\alpha\delta}{(\delta+2)}\bigg(\|\tilde{u}\|^{\delta}_{\L^{2(\delta+1)}}+\|u_h\|^{\delta}_{\L^{2(\delta+1)}}\bigg)\|\theta\|_{\L^{2(\delta+1)}}\|\nabla_h e^c\|_{\L^2(\mathcal{T}_h)}+ 2\beta\gamma\|\theta\|_{\L^2}\|e^c\|_{\L^2} \\&\quad+ 2^{\delta}\beta(1+\gamma)(\delta+1)\left(\|\tilde{u}\|^{\delta}_{\L^{2(\delta+1)}}+\|u_h\|^{\delta}_{\L^{2(\delta+1)}}\right)\|\theta\|_{\L^{2(\delta+1)}}\|e^c\|_{\L^2}\\&\quad+2^{2\delta}(2\delta+1)\beta\left(\|\tilde{u}\|_{\L^{2(\delta+1)}}^{\delta}+\|u_h\|_{\L^{2(\delta+1)}}^{\delta}\right)\|\theta\|_{\L^{2(\delta+1)}}\|e^c\|_{\L^{2(\delta+1)}}.
	\end{align}
	Integrating from $0$ to $T$, using positivity of the kernel,Young's inequality and the stability estimate yields 
	\begin{align}\label{3.33}
		&\|e^c(T)\|_{\L^2}^2  + C\int_0^T |\!|\!| e^c(t)|\!|\!|^2  \mathrm{~d}t\\& \leq \|e^c(0)\|_{\L^2}^2 + \int_0^T\|\partial_t u_h^r(t)\|_{\L^2}^2\mathrm{~d}t + \nu \int_0^T\|\nabla_h \theta(t)\|_{\L^2(\mathcal{T}_h)}^2\mathrm{~d}t+ \eta \int_0^T \|(K*\nabla_h u_h^r)(t)\|^2_{\L^2(\mathcal{T}_h)}\mathrm{~d}t
		\\&\quad + \frac{2^{2\delta}C\alpha^2\delta^2}{(\delta+2)^2}\int_0^T\bigg(\|\tilde{u}(t)\|^{2\delta}_{\L^{2(\delta+1)}}+\|u_h(t)\|^{2\delta}_{\L^{2(\delta+1)}}\bigg)\|\nabla_h\theta(t)\|_{\L^2(\mathcal{T}_h)}^2 \mathrm{~d}t\\&\quad + C\int_0^T\|\theta(t)\|_{\L^2}^2\mathrm{~d}t + \int_0^T 2^{4\delta}(2\delta+1)^2\beta^2\left(\|\tilde{u}\|_{\L^{2(\delta+1)}}^{2\delta} +\|u_h(t)\|_{\L^{2(\delta+1)}}^{2\delta}\right)\|\nabla_h\theta(t)\|_{\L^2(\mathcal{T}_h)}^2 \mathrm{~d}t\\&\quad + \int_0^T\bigg(1+\beta^2\gamma^2 + 2^{2\delta}\beta^2(1+\gamma)^2(\delta+1)^2\bigg(\|\tilde{u}(t)\|^{2(\delta+1)}_{\L^{2(\delta+1)}}+\|u_h(t)\|^{2(\delta+1)}_{\L^{2(\delta+1)}}\bigg)\bigg)\|e^c(t)\|_{\L^2}^2 \mathrm{~d}t
	\end{align}
	The memory term can be assessed by applying Young's convolution inequality as follows:
	\begin{align}
		\int_0^T \|(K*\nabla_h u_h^r)(t)\|^2_{\L^2(\mathcal{T}_h)}\mathrm{~d}s \leq \left(\int_0^T |K(t)| \mathrm{~d}t\right)^2\int_0^T|\!|\!| u_h^r(t)|\!|\!|^2 \mathrm{~d}t.
	\end{align}
	Once more, employing Young's and Gronwall inequalities, we obtain:
	\begin{align}\label{3.34}
		&\|e^c(T)\|_{\L^2}^2 + C\int_0^T |\!|\!| e^c(t)|\!|\!|^2  \mathrm{~d}t\\& \leq C\bigg(\|e^c(0)\|_{\L^2}^2 + \int_0^T\|\partial_t u_h^r(t)\|_{\L^2}^2\mathrm{~d}t + \int_0^T|\!|\!|\theta(t)|\!|\!|^2\mathrm{~d}t + \int_0^T|\!|\!| u_h^r(t)|\!|\!|^2 \mathrm{~d}t\bigg).
	\end{align}
	By applying the triangle inequality along with Theorem 3.2 and Lemma 4.2, we achieve the desired outcome.
\end{proof}
\section{A posteriori error estimates for the Fully discrete scheme}\setcounter{equation}{0}\label{sec5}
This section is dedicated to the fully discrete a posteriori estimation of the GBHE with memory. Two distinct time discretization approaches will be explored: one utilizing backward Euler discretization, and the other employing the Crank-Nicolson (CN) scheme for temporal discretization. Both methods will be coupled with the discontinuous Galerkin Finite Element Method (DGFEM) for spatial discretization. Our methodology closely follows that of the semi-discrete case, with additional emphasis on discrete time considerations and the possibility of mesh changes.

The time domain $[0,T]$ is divided into intervals, $0= t_0< t_1<\cdots t_k< \cdots < t_N =T,$ incorporating time stepping, $ \tau_k = t_k -t_{k-1}$. Here, $u_h^k$ represents the solution $u(\x,t)$ at time $t =t_k$, and $V_h^k$ denotes the discrete space corresponding to the partition $\mathcal{T}_{h,k}$. It is important to note that this discrete space may differ from the space $\mathcal{T}_{h,k-1}$ for $k\geq 1$, highlighting the potential for mesh changes.
\subsection{Backward Euler}
The fully discrete weak formulation using the backward Euler discretization for the time derivative term of the system (\ref{3.GBHE}) reads as: Given $u_h^{k-1}$, find $ u_h^k\in V_h^k$ such that
\begin{align}\label{2.ncweakformfd}
	\nonumber	\left(\frac{u_h^k-u_h^{k-1}}{\tau_k}, \chi\right) + \mathcal{A}_{DG}(u_h^k,\chi)+\eta \left(\sum_{j=1}^{k}\omega_{kj}\tau_ja_{DG}(u_h^{j}, \chi) \right)= (f^k,\chi),\\
	(u_h(0),\chi)=(u_h^0,\chi), \quad \forall ~~\chi \in V^k_h.
\end{align}
where, $\omega_{k j}=\frac{1}{\tau_k\tau_j } \int_{t_{k-1}}^{t_k} \int_{t_{j-1}}^{\min \left(t, t_j\right)} K(t-s) \mathrm{~d}s \mathrm{~d}t$, for $1\leq k\leq N$ and $f^k = (\Delta t)^{-1}\int_{t_{k-1}}^{t_k} f(s)  \mathrm{d}s$ .
It is important to note that the partition $\mathcal{T}_{h,k}$ may undergo either coarsening or refinement compared to $\mathcal{T}_{h,k-1}$, resulting in the discrete space $V_h^k$ having a different known term $u_h^{k-1}\in V_h^{k-1}$. To address this discrepancy, we introduce a transfer operator $\mathcal{I}^k: V_h^{k-1} \rightarrow V_h^k$, which facilitates the transfer of information such that $\mathcal{I}^k u_h^{k-1}\in V_h^{k}$. This transfer operator, for instance, could be the $\L^2$ projection or an interpolation operator. The modified general fully discrete formulation is then expressed as follows:
\begin{align}
	\nonumber	\left(\frac{u_h^k-\mathcal{I}^ku_h^{k-1}}{\tau_k}, \chi\right) + \mathcal{A}_{DG}(u_h^k,\chi)+\eta \left(\sum_{j=1}^{k}\omega_{kj}\tau_ja_{DG}(\mathcal{I}^{j+1}u_h^{j}, \chi) \right)= (f^k,\chi), \quad \forall~~ \chi\in V_h^n.
\end{align}
We define the time indicator $\Xi_k$ at each time $t_k$, as
\begin{align}\label{3.timeeff}
	\nonumber	\Xi_k = \tau_k \Big(&\|u_h^k-\mathcal{I}^ku_h^{k-1}\|_{\H^1(\mathcal{T}_{h,k})}+ h_E(\tau_k)^{-2}\|~\![\![\mathcal{I}^ku_h^{k-1}-u_h^{k-1}]\!]~\|_{
		\L^2(E)} \\&+ h_E(\tau_k)^{-2}\|~\![\![u_h^k - \mathcal{I}^ku_h^{k-1}]\!]~\|_{\L^2(E)} \Big),
\end{align}
To account for the memory term, the time estimator includes an additional term reflecting the error in discretizing the memory term at the current time step ($t = t_k$). This term arises due to the oscillations introduced by the kernel function.
\begin{align}\label{3.dom}
	\mathcal{K}_k^2 = 	\eta^2\sum_{k=1}^N \int_{t_{k-1}}^{t_k} \Big\|\left( \sum_{j=1}^{k}\frac{1}{\tau_k\tau_j } \int_{t_{k-1}}^{t_k} \int_{t_{j-1}}^{\min \left(t, t_j\right)} K(t-s)\tau_j\mathrm{~d}s \mathrm{~d}t-\sum_{j=1}^k\int_{t_{j-1}}^{\min \left(t, t_j\right)}K(t-s)\right) \nabla\tilde{u}_h^{j} \mathrm{~d}s\Big\|^2
\end{align}
Subsequently, we establish the accumulated time, spatial error indicator, and data oscillation as follows:
\begin{align}
	\label{3.timeestimator1}	\Xi^2 &= \sum_{k=1}^N \Xi_k^2  , \quad \Upsilon^2 = \sum_{k=1}^N\tau_k \Big(\Upsilon_k^2(u_h^k)+ \Upsilon_k^2(\mathcal{I}^ku_h^{k-1})\Big),\\\label{3.timeestimator2}\mathcal{K}^2 &= \sum\limits_{k=1}^N\mathcal{K}_k^2,\quad  \mathcal{F}_1^2 = \sum\limits_{k=1}^N\int_{t_{k-1}}^{t_{k}}\|f-f^k\|_{\L^2}^2. 
\end{align}
where the expression $\Upsilon$ contains the terms that form the estimator defined for the steady-state condition at a specific time step $t_k$,
such that 
$$\Upsilon^2_k(u_h^k)= \Upsilon^2_{K,k} + \Upsilon^2_{E,k} + \Upsilon^2_{J,k},$$ with,
\begin{align}
	&\Upsilon^2_{K,k} := h_K^2\Big(\|R_K^k\|_{\L^2(K)}^2\Big), \quad \Upsilon^2_{E,k} = \sum_{E\in  \mathcal{E}_h} h_E\Big(\|R_{E_1}^k\|_{\L^2(E)}^2 +\|R_{E_2}^k\|_{\L^2(E)}^2\Big),\\& \Upsilon^2_{J,k} := \sum\limits_{E \in  \mathcal{E}_h} \mathfrak{K}_h\Big\|\nu[\![  u_h^k ]\!]\Big\|_{\L^2(E)}^2 + \sum\limits_{E \in  \mathcal{E}_h} \mathfrak{K}_h\Big\| \eta\sum\limits_{j=1}^{k}\omega_{kj}\tau_k [\![\nabla \mathcal{I}^ju^j_h]\!]\Big\|_{\L^2(E)}^2,
\end{align}
and the element-wise and edge-wise residual is given as:
\begin{align}
	R_K^k &:= \Big\{{\frac{u_h^k - \mathcal{I}^ku_h^{k-1}}{\tau_k}}+f_h^k+\nu\Delta u^k_h- \alpha (u^k_h)^{\delta}\sum\limits_{i=1}^d\frac{\partial u^k_h}{\partial x_i} +\eta \sum_{j=1}^{k}\omega_{kj}\tau_j\Delta \mathcal{I}^{j+1}u_h^{j}\\&\qquad+\beta u^k_h(1-(u^k_h)^{\delta})((u_h^k)^{\delta}-\gamma)\Big\}\Big|_K,\\
	R_{E_1}^k&:= \begin{cases}\frac{1}{2} [\![(\nu\nabla_h u^k_h)\cdot\n]\!] \quad \text { for } E \in \mathcal{E}_h^i , \\
		0 ~\hspace{23mm}\text {for } E \in  \mathcal{E}_h^{\partial}. \end{cases}, \quad R_{E_2}^k:= \begin{cases}\frac{1}{2}\eta\sum\limits_{j=1}^{k}\omega_{kj}\tau_j [\![( \nabla_h \mathcal{I}^{j+1}u^j_h )\cdot\n]\!] \quad\text { for } E \in \mathcal{E}^i_h, \\
		0 ~\hspace{42mm}\text {for }E \in  \mathcal{E}_h^{\partial}. \end{cases}
\end{align}
We now introduce a linear interpolation $u_h(t)$, for each $t\in (t_{k-1},t_{k})$, as
\begin{align}\label{liapp}
	u_h(t) = l_{k-1}  \mathcal{I}^ku_h^{k-1}  + l_{k}u_h^k = \frac{t_k-t}{\tau_k}\mathcal{I}^ku_h^{k-1} + \frac{t- t_{k-1}}{\tau_k}u_h^k. 
\end{align}
where ${l_{k-1},l_k}$ is the standard linear interpolation defined on $[t_{k-1},t_k]$. For each time step $k$ we split the discrete solution as, $u_h^k = u_{h,c}^{k} + u_{h,r}^k$ and their linear approximation $u_{h,c}(t)$ and  $u_{h,r}(t)$ are given in the similar manner as in \eqref{liapp}. Rewriting the error $e = u - u_h = e_c- u_{h,r}$,  where $e_{c}= u-u_{h,c}$ and $\tilde{e} = u - \tilde{u}_h$. For $t\in (t_{k-1}, t_k)$, we define 
\begin{align}\label{3.tauk}
	u_{\tau_k}(t) = l_{k-1} u_h^{k-1}  +  l_{k}u_h^k = \frac{t_k-t}{\tau_k}u_h^{k-1} + \frac{t- t_{k-1}}{\tau_k}u_h^k, 
\end{align}
so, we have that 
$$\partial_t u_{h}(t) = \frac{1}{\tau_k}(u_h^k-\mathcal{I}^ku_h^{k-1}), \quad\partial_t u_{\tau_k}(t) = \frac{1}{\tau_k}(u_h^k-u_h^{k-1}),$$
and we consider the problem of finding $\tilde{u}^k \in \H_0^1(\Omega),$ such that 
\begin{align}\label{3.fdapp}
	(\partial_t \tilde{u}_h(t), v) +  \tilde{\mathcal{A}}_{DG}(\tilde{u}^k,v)+ \eta \Big(\sum_{j=1}^{k}\omega_{kj}\tau_ja_{DG}^c(\tilde{u}_h^j, v) \Big) =(f^k,v), \quad \forall ~~v \in \H_0^1(\Omega), 
\end{align}

\begin{lemma}\label{3.Lemma5.1}
	The following estimates holds:
	\begin{align}
		&\|\tilde{e}_c(t_N)\|_{\L^2}^2 + \int_0^{t_N}|\!|\!| \tilde{e}_c(t)|\!|\!|^2 \mathrm{d}t \\&\leq C(\Xi^2 + \Upsilon^2+	\mathcal{F}_1^2+\mathcal{K}^2 ) +\|\tilde{e}_c(0)\|_{\L^2}^2 + \sum_{k=1}^{N-1}\Big(\|u(t_k)- \mathcal{I}^{k+1}\tilde{u}_{h,c} ^k\|_{\L^2}^2-\|\tilde{e}_c(t_k)\|_{\L^2}^2\Big),
	\end{align}
	where $\mathcal{F}_1^2$ and $\mathcal{K}^2$ are defined in \eqref{3.timeestimator2}.
\end{lemma}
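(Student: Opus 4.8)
The plan is to carry out the fully discrete analogue of the energy argument of Theorem~\ref{3.theorem4.3}, performed slab by slab on each interval $(t_{k-1},t_k)$, with three new ingredients compared with the semi-discrete case: the backward Euler time derivative is piecewise constant, the transfer operators $\mathcal I^k$ break the natural telescoping in time, and the memory convolution is replaced by the quadrature $\sum_{j}\omega_{kj}\tau_j a^c_{DG}(\tilde u_h^j,\cdot)$. First I would subtract the elliptic reconstruction \eqref{3.fdapp} from the continuous weak form \eqref{3.weakGBHE} and test with $v=\tilde e_c$, the conforming reconstruction error (so $\tilde e_c\in\H_0^1(\Omega)$). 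Splitting the elliptic form by inserting a conforming discrete function, Lemma~\ref{lemma3.2} (used under the parameter condition \eqref{3.appara}) supplies the coercive term $\tfrac{\nu}{2}|\!|\!|\tilde e_c|\!|\!|^2$ plus a nonnegative $\|\tilde e_c\|_{\L^2}^2$ term, while the remaining advection and reaction mismatches between $\tilde u^k$ and the discrete iterate $u_h^k$ are estimated exactly as in Lemma~\ref{lemma3.6} and \eqref{3.b1}--\eqref{3.rc3}; after element-wise integration by parts of the diffusion and memory pieces (as in \eqref{3.intpar} and \eqref{3.sd8}) these reduce to the residuals $R_K^k$, $R_{E_1}^k$, $R_{E_2}^k$ and the data term $f-f^k$, which Clément's estimates \eqref{3.clmes1}--\eqref{3.clmes2} and an inverse inequality bound by $C(\Upsilon+\mathcal F_1)|\!|\!|\tilde e_c|\!|\!|$, the $N_{DG}$-type edge terms being absorbed into the jump part of $\Upsilon$ (cf. \eqref{3.12}, \eqref{3.13}, \eqref{3.sd9}).

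Next I would treat the two genuinely new terms. For the time derivative, $\partial_t u_h$ is piecewise constant on each slab, so $(\partial_t(u-u_h),\tilde e_c)=\tfrac12\tfrac{\mathrm d}{\mathrm dt}\|\tilde e_c\|_{\L^2}^2+(\text{transfer/projection defect},\tilde e_c)$; Cauchy--Schwarz together with inverse and trace estimates bound the time integral of the defect by the time indicator $\Xi$ of \eqref{3.timeeff} (this is where $\|u_h^k-\mathcal I^k u_h^{k-1}\|_{\H^1}$ and the jumps $[\![\mathcal I^k u_h^{k-1}-u_h^{k-1}]\!]$, $[\![u_h^k-\mathcal I^k u_h^{k-1}]\!]$ enter). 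For the memory term I would write $\eta\big((K*\nabla u)(t),\nabla\tilde e_c\big)-\eta\sum_{j=1}^k\omega_{kj}\tau_j a^c_{DG}(\tilde u_h^j,\tilde e_c)$ as the sum of (i) the quadrature consistency error $\eta\big(\sum_j[\,\omega_{kj}\tau_j-\int_{t_{j-1}}^{\min(t,t_j)}K(t-s)\,\mathrm ds\,]\nabla\tilde u_h^j,\nabla\tilde e_c\big)$, which by Cauchy--Schwarz and Young contributes precisely $\mathcal K^2$ from \eqref{3.dom}, and (ii) the genuine convolution error $\eta\big(K*\nabla(u-\tilde u_h),\nabla\tilde e_c\big)$ (up to the time-interpolation defect of $\tilde u_h$ inside the convolution, again absorbed into $\mathcal K$/$\Xi$), which I split as $\eta(K*\nabla\tilde e_c,\nabla\tilde e_c)-\eta(K*\nabla\tilde u_{h,r},\nabla\tilde e_c)$: the first integrates to a nonnegative quantity over $(0,t_N)$ by the kernel positivity \eqref{pk} and is discarded, and the second is handled by Young's convolution inequality and Lemma~\ref{lemma3.3}, up to an $\varepsilon|\!|\!|\tilde e_c|\!|\!|^2$ absorbed into the coercivity term.

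Finally, integrating each slab inequality over $(t_{k-1},t_k)$ and summing over $k$, the $\L^2$-energy telescopes; because the transfer operator resets the starting value on the next slab, the endpoint values at $t_k$ $(1\le k\le N-1)$ are $\tilde e_c(t_k^-)=u(t_k)-\tilde u_{h,c}^k$ and $\tilde e_c(t_k^+)=u(t_k)-\mathcal I^{k+1}\tilde u_{h,c}^k$, so the telescoped sum equals $\tfrac12\|\tilde e_c(t_N)\|_{\L^2}^2-\tfrac12\|\tilde e_c(0)\|_{\L^2}^2+\tfrac12\sum_{k=1}^{N-1}\big(\|\tilde e_c(t_k)\|_{\L^2}^2-\|u(t_k)-\mathcal I^{k+1}\tilde u_{h,c}^k\|_{\L^2}^2\big)$, and moving the last sum to the right reproduces exactly the correction term kept in the statement. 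Collecting $\Xi^2,\Upsilon^2,\mathcal F_1^2,\mathcal K^2$ accumulated over the slabs, using stability bounds for $u_h^k$ and $\tilde u_h^k$ (time-dependent analogues of Remark~\ref{rhs}) and Lemma~\ref{lemma3.3} for $|\!|\!|\tilde u_{h,r}|\!|\!|$, and discarding the memory energy via \eqref{pk}, gives the claim, with any residual lower-order $\|\tilde e_c\|_{\L^2}^2$ terms absorbed by Poincaré into $\tfrac{\nu}{4}|\!|\!|\tilde e_c|\!|\!|^2$ (or removed by a Gronwall argument whose $e^{CT}$ factor is absorbed into $C$). I expect the main obstacle to be the bookkeeping of two interacting error sources: tracking the transfer-operator defect so that precisely the asserted sum $\sum_{k=1}^{N-1}\big(\|u(t_k)-\mathcal I^{k+1}\tilde u_{h,c}^k\|^2-\|\tilde e_c(t_k)\|^2\big)$ survives, and decomposing the memory term so that the quadrature part is isolated as $\mathcal K_k$ while the kernel-positivity structure that removes the remaining memory energy is preserved.
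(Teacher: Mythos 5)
Your proposal is correct and follows essentially the same route as the paper: subtract the elliptic reconstruction \eqref{3.fdapp} from \eqref{3.weakGBHE}, test with $\tilde e_c$, use Lemma \ref{lemma3.2} and Lemma \ref{lemma3.6} (equivalently the stationary reliability of Theorem \ref{3.thm3.7}) for the elliptic part, split the memory term into the quadrature-consistency piece giving $\mathcal K$, the time-interpolation defect absorbed into $\Xi$, and the nonconforming piece handled by Young's convolution inequality, discard the memory energy by kernel positivity \eqref{pk}, and recover the correction sum from the slab-wise telescoping broken by the transfer operators. The only cosmetic difference is that you re-derive the residual/Cl\'ement bounds on each slab, whereas the paper invokes the already-proved stationary estimate to produce $\Upsilon_k$.
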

\begin{proof}
	By subtracting \eqref{3.weakGBHE} from \eqref{3.fdapp}, we obtain:
	\begin{align}
		(\partial_t(u - \tilde{u}_h), v) +\tilde{\mathcal{A}}_{DG}(u,v) -  \tilde{\mathcal{A}}_{DG}(\tilde{u}^k,v) + \eta(( K*\nabla u),\nabla v) - \eta \Big(\sum_{j=1}^{k}\omega_{kj}\tau_ja_{DG}^c(\tilde{u}_h^{j}, v) \Big)  =(f -f^k,v),
	\end{align}
	With the notation established previously, let's choose $v = \tilde{e}_c = u- \tilde{u}_{h,c}$. We then have:
	\begin{align*}
		&(\partial_t\tilde{e}_c, \tilde{e}_c) + \tilde{\mathcal{A}}_{DG}(u,\tilde{e}_c) - \tilde{\mathcal{A}}_{DG}(\tilde{u}_{h,c},\tilde{e}_c)  + \eta(( K*\nabla \tilde{e}_c),\nabla  \tilde{e}_c) =   (f -f^{k},\tilde{e}_c)-(\partial_t\tilde{u}_{h,r}, \tilde{e}_c) \\&+ \eta \Big(\sum_{j=1}^{k}\omega_{kj}\tau_ja_{DG}^c(\tilde{u}_h^{j}, \tilde{e}_c) \Big)  - \eta(( K*\nabla\tilde{u}_{h,c}),\nabla  \tilde{e}_c) +  \tilde{\mathcal{A}}_{DG}(\tilde{u}^k,\tilde{e}_c) - \tilde{\mathcal{A}}_{DG}(\tilde{u}_{h,c},\tilde{e}_c).
	\end{align*}
	Leveraging Lemmas \ref{lemma3.2} and \ref{lemma3.6}, along with the Cauchy-Schwarz inequality, yields:
	\begin{align*}
		&\frac{1}{2}\frac{\mathrm{d}}{\mathrm{d}t}\|\tilde{e}_c\|_{\L^2}^2 +  |\!|\!| \tilde{e}_c|\!|\!|^2 + \eta(( K*\nabla \tilde{e}_c),\nabla  \tilde{e}_c)\\ & \leq C \Big(\|f-f^k\|_{\L^2} + \|\partial_t\tilde{u}_{h,r} \|_{\L^2} + |\!|\!|  \tilde{u}^k - \tilde{u}_{h,c}|\!|\!|  \Big)|\!|\!| \tilde{e}_c|\!|\!| +  \eta\| K*\nabla\tilde{u}_{h,r}\|_{\L^2}   |\!|\!| \tilde{e}_c|\!|\!|  \\&\quad	+ \eta \Big\|\sum_{j=1}^{k}\omega_{kj}\tau_j\nabla\tilde{u}_h^{j}-( K*\nabla\tilde{u}_{h})\Big\||\!|\!| \tilde{e}_c|\!|\!|.
	\end{align*}
	Note that the kernel function $K(t)$ is positive. By applying Young's inequality and integrating the resulting expression over the time interval $[0, t_N]$, we obtain the following inequality:
	\begin{align}\label{3.68}
		&\nonumber\|\tilde{e}_c(t_N)\|_{\L^2}^2 + \int_0^{t_N}|\!|\!| \tilde{e}_c(t)|\!|\!|^2 \mathrm{d}t \\&\nonumber\leq \|\tilde{e}_c(0)\|_{\L^2}^2 + \sum_{k=1}^N \int_{t_{k-1}}^{t_k} \Big( \|f-f^k\|^2_{\L^2}+ \|\partial_t\tilde{u}_{h,r} \|_{\L^2}^2  +|\!|\!|  \tilde{u}^k - \tilde{u}_{\tau_k} |\!|\!|^2 +   |\!|\!|  \tilde{u}_{\tau_k}- \tilde{u}_{h,c}|\!|\!|^2\Big) \\&\nonumber\quad +\frac{1}{2}\sum_{k=1}^{N-1}\Big(\|u(t_k)- \mathcal{I}^{k+1}\tilde{u}_{h,c} ^k\|_{\L^2}^2-\|\tilde{e}_c(t_k)\|_{\L^2}^2\Big)+ \eta^2\sum_{k=1}^N \int_{t_{k-1}}^{t_k} \| K*\nabla\tilde{u}_{h,r}\|_{\L^2}^2  \\&\quad+ \sum_{k=1}^N \int_{t_{k-1}}^{t_k} \eta^2 \Big\|\sum_{j=1}^{k}\omega_{kj}\tau_j\nabla\tilde{u}_h^{j}-( K*\nabla\tilde{u}_{h}^j)\Big\|^2+ \sum_{k=1}^N \int_{t_{k-1}}^{t_k} \eta^2 \Big\|( K*\nabla\tilde{u}_{h}^j)-( K*\nabla\tilde{u}_{h}(s)\Big\|^2.
	\end{align}
	Recalling the definition of $\omega_{kj}$, we find:
	\begin{align}\label{3.amem3}
		&\nonumber\eta^2\sum_{k=1}^N \int_{t_{k-1}}^{t_k}  \Big\|\sum_{j=1}^{k}\omega_{kj}\tau_j\nabla\tilde{u}_h^{j}-( K*\nabla\tilde{u}_{h}^j)\Big\|^2 \\&\nonumber= \eta^2\sum_{k=1}^N \int_{t_{k-1}}^{t_k} \Big\|\sum_{j=1}^{k}\frac{1}{\tau_k\tau_j } \int_{t_{k-1}}^{t_k} \int_{t_{j-1}}^{\min \left(t, t_j\right)} K(t-s)\tau_j\nabla\tilde{u}_h^{j} \mathrm{~d}s \mathrm{~d}t-\sum_{j=1}^k\int_{t_{j-1}}^{\min \left(t, t_j\right)}K(t-s)\nabla\tilde{u}_{h}^j \mathrm{~d}s\Big\|^2\\&\leq \eta^2\sum_{k=1}^N \int_{t_{k-1}}^{t_k} \Big\|\left( \sum_{j=1}^{k}\frac{1}{\tau_k\tau_j } \int_{t_{k-1}}^{t_k} \int_{t_{j-1}}^{\min \left(t, t_j\right)} K(t-s)\tau_j\mathrm{~d}s \mathrm{~d}t-\sum_{j=1}^k\int_{t_{j-1}}^{\min \left(t, t_j\right)}K(t-s)\right) \nabla\tilde{u}_h^{j} \mathrm{~d}s\Big\|^2.
	\end{align}
	Additionally, by employing the definition of $\tilde{u}_h$, applying Young's convolution inequality, and interchanging the summation, we achieve:
	\begin{align}\label{3.amem}
		\nonumber	\eta^2 \sum_{k=1}^N \int_{t_{k-1}}^{t_k} \Big\|( K*\nabla\tilde{u}_{h}^j)-( K*\nabla\tilde{u}_{h}(s))\Big\|^2&=\eta^2  \sum_{k=1}^N \int_{t_{k-1}}^{t_k} \Big\|\sum_{j=1}^k\int_{t_{j-1}}^{\min \left(t, t_j\right)}K(t-s)\left(\nabla\tilde{u}_{h}^j-\nabla\tilde{u}_{h}(s) \right) \mathrm{~d}s\Big\|^2
		\\\nonumber&\leq \eta^2  \sum_{k=1}^N \int_{t_{k-1}}^{t_k} \left(\int_0^TK(t-s)\mathrm{~d}s\right)^2\left( \sum_{j=1}^k\int^{t_j}_{t_{j-1}}\| \nabla\tilde{u}_{h}^j-\nabla\tilde{u}_{h}(s) \|\right) ^2\\\nonumber&\leq \eta^2  \sum_{k=1}^N \int_{t_{k-1}}^{t_k} \left(\int_0^TK(t-s)\mathrm{~d}s\right)^2\left( \sum_{j=1}^k\tau_j |\!|\!| \tilde{u}_{h}^j-\mathcal{I}^j\tilde{u}_{h}^{j-1} |\!|\!|\right) ^2 \\\nonumber&\leq C\eta^2  \sum_{k=1}^N \int_{t_{k-1}}^{t_k}\tau_j\sum_{j=1}^k|\!|\!| \tilde{u}_{h}^j-\mathcal{I}^j\tilde{u}_{h}^{j-1} |\!|\!|^2\\&\nonumber\leq C\eta^2  \sum_{k=1}^N \sum_{j=1}^k\tau_k\tau_j|\!|\!| \tilde{u}_{h}^j-\mathcal{I}^j\tilde{u}_{h}^{j-1} |\!|\!|^2\\&\leq C\eta^2  \sum_{k=1}^N \tau_k|\!|\!| \tilde{u}_{h}^k-\mathcal{I}^k\tilde{u}_{h}^{k-1} |\!|\!|^2,
	\end{align}
	and  finally the  term $\| K*\nabla\tilde{u}_{h,r}\|_{\L^2}^2$ can be estimated using the regularity $K\in \L^1(0,T)$ as
	\begin{align}\label{3.amem5}
		\eta^2\| K*\nabla\tilde{u}_{h,r}\|_{\L^2}^2 \leq \eta^2\left(\int_0^tK(t)\mathrm{~d}t\right)^2  |\!|\!| \tilde{u}_{h,r} |\!|\!|^2\leq C |\!|\!| \tilde{u}_{h,r} |\!|\!|^2. 
	\end{align}
	By combining the estimates \eqref{3.amem3}-\eqref{3.amem5} and substituting them into \eqref{3.71}, we obtain:
	\begin{align}\label{3.71}
		&\nonumber\|\tilde{e}_c(t_N)\|_{\L^2}^2 + \int_0^{t_N}|\!|\!| \tilde{e}_c(t)|\!|\!|^2 \mathrm{d}t \\&\nonumber\leq \|\tilde{e}_c(0)\|_{\L^2}^2 +C \sum_{k=1}^N \int_{t_{k-1}}^{t_k} \Big( \|f-f^k\|^2_{\L^2}+ \|\partial_t\tilde{u}_{h,r} \|_{\L^2}^2  +|\!|\!|  \tilde{u}^k - \tilde{u}_{\tau_k} |\!|\!|^2 + |\!|\!| \tilde{u}_{h,r} |\!|\!|^2+   |\!|\!|  \tilde{u}_{\tau_k}- \tilde{u}_{h,c}|\!|\!|^2\Big) \\&\nonumber\quad +\frac{1}{2}\sum_{k=1}^{N-1}\Big(\|u(t_k)- \mathcal{I}^{k+1}\tilde{u}_{h,c} ^k\|_{\L^2}^2-\|\tilde{e}_c(t_k)\|_{\L^2}^2\Big) + C\eta^2  \sum_{k=1}^N \tau_k|\!|\!| \tilde{u}_{h}^k-\mathcal{I}^k\tilde{u}_{h}^{k-1} |\!|\!|^2\\&\nonumber\quad +\eta^2\sum_{k=1}^N \int_{t_{k-1}}^{t_k} \Big\|\left( \sum_{j=1}^{k}\frac{1}{\tau_k\tau_j } \int_{t_{k-1}}^{t_k} \int_{t_{j-1}}^{\min \left(t, t_j\right)} K(t-s)\tau_j\mathrm{~d}s \mathrm{~d}t-\sum_{j=1}^k\int_{t_{j-1}}^{\min \left(t, t_j\right)}K(t-s)\right) \nabla\tilde{u}_h^{j} \mathrm{~d}s\Big\|^2.
	\end{align}
	Recalling the definition of $\tilde{u}_{\tau_k}$ and applying the triangle inequality, we get:
	\begin{align}
		\int_{t_{k-1}}^{t_k} |\!|\!|  \tilde{u}^k - \tilde{u}_{\tau_k} |\!|\!|^2 \leq \tau_k |\!|\!| \tilde{u}^k - \tilde{u}^{k-1} |\!|\!|^2 \leq  \Upsilon_k^2 + \tau_k |\!|\!| \tilde{u}^k - \tilde{u}_h^{k} |\!|\!|^2 + \tau_k |\!|\!| \tilde{u}^{k-1} - \mathcal{I}^k\tilde{u}^{k-1} |\!|\!|^2.
	\end{align}
	In light of Theorem \ref{3.thm3.7} and the previous findings, we can conclude that:
	\begin{align}
		\|\tilde{e}_c(t_N)\|_{\L^2}^2 + \int_0^{t_N}|\!|\!| \tilde{e}_c(t)|\!|\!|^2 \mathrm{d}t \leq &C(\Xi^2 + \Upsilon^2 + 	\mathcal{F}_1^2+\mathcal{K}^2 ) +\|\tilde{e}_c(0)\|_{\L^2}^2\\
		&+ \sum_{k=1}^{N-1}\Big(\|u(t_k)- \mathcal{I}^{k+1}\tilde{u}_{h,c} ^k\|_{\L^2}^2-\|\tilde{e}_c(t_k)\|_{\L^2}^2\Big).
	\end{align}
\end{proof}
\begin{theorem}\label{3.Theorem5.2}
	Let $u$ and $u_h$ represent the exact solution and the discrete solution, respectively, of the equation \eqref{3.GBHE}. Consider $\Xi$ and $\Upsilon$ as the a posteriori error estimators defined in \eqref{3.timeestimator1}. The following reliability estimator is established:
	\begin{align}
		\|e(t_N)\|_{\L^2}^2 + \int_0^{t_N}|\!|\!| e(t)|\!|\!|^2 \mathrm{d}t &\leq C\Big(\Xi^2 + \Upsilon^2+	\mathcal{F}_1^2 +\mathcal{K}^2 +\|\tilde{e}_c(0)\|_{\L^2}^2 + \sum_{k=1}^{N-1}\|u_{h,r}^k- \mathcal{I}^{k+1}\tilde{u}_{h,r} ^k\|_{\L^2}^2\Big), 
	\end{align}
	\bl{where 	$\mathcal{F}_1^2$ and $\mathcal{K}^2$ are defined in \eqref{3.timeestimator2}.}
\end{theorem}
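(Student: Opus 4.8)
The plan is to reduce Theorem~\ref{3.Theorem5.2} to Lemma~\ref{3.Lemma5.1} by adding back the non-conforming and elliptic-reconstruction parts of the error and then disposing of the leftover telescoping ``mesh-change'' sum. First I would fix the error splitting: with $\theta_c := \tilde{u}_{h,c} - u_{h,c}$ denoting the (time-interpolated) conforming elliptic-reconstruction error, one has, pointwise in time,
\begin{align*}
	e = u - u_h = (u - \tilde{u}_{h,c}) + (\tilde{u}_{h,c} - u_{h,c}) - u_{h,r} = \tilde{e}_c + \theta_c - u_{h,r},
\end{align*}
so that the triangle inequality and $(a+b+c)^2 \le 3(a^2+b^2+c^2)$ give
\begin{align*}
	\|e(t_N)\|_{\L^2}^2 + \int_0^{t_N}|\!|\!| e(t)|\!|\!|^2\,\mathrm{d}t \le 3\Big(\|\tilde{e}_c(t_N)\|_{\L^2}^2 + \int_0^{t_N}|\!|\!| \tilde{e}_c(t)|\!|\!|^2\,\mathrm{d}t\Big) + 3\,\mathcal{R}_\theta + 3\,\mathcal{R}_r,
\end{align*}
where $\mathcal{R}_\theta$ and $\mathcal{R}_r$ are the corresponding (endpoint-$\L^2$ plus energy-in-time) quantities for $\theta_c$ and $u_{h,r}$.

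For the three pieces: the $\tilde{e}_c$ contribution is exactly what Lemma~\ref{3.Lemma5.1} controls, by $C(\Xi^2 + \Upsilon^2 + \mathcal{F}_1^2 + \mathcal{K}^2) + \|\tilde{e}_c(0)\|_{\L^2}^2$ plus the telescoping sum treated below. For $\mathcal{R}_r$ I would apply the nodal-averaging/jump argument of Lemma~\ref{lemma3.3} on each mesh $\mathcal{T}_{h,k}$ to get $|\!|\!| u_{h,r}^k|\!|\!|^2 \le C\,\Upsilon_{J,k}^2$, hence (Poincar\'e) $\|u_{h,r}^k\|_{\L^2}^2 \le C\,\Upsilon_{J,k}^2$, and the linear-in-time interpolant of $u_{h,r}$ then yields $\mathcal{R}_r \le C\,\Upsilon^2$. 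For $\mathcal{R}_\theta$, at each level $t_k$ the elliptic reconstruction $\tilde{u}^k$ solves \eqref{3.fdapp} while $u_{h,c}^k$ is the conforming part of the fully discrete solution, so repeating the stationary reliability chain of Theorem~\ref{3.thm3.7} (Lemmas~\ref{lemma3.2} and~\ref{lemma3.6}, the Cl\'ement estimates \eqref{3.clmes1}--\eqref{3.clmes2}, and the integration-by-parts identity \eqref{3.intpar}) and estimating the fully discrete memory term as in \eqref{3.amem3}--\eqref{3.amem} gives $|\!|\!| \theta_c^k|\!|\!|^2 \le C(\Upsilon_k^2 + \mathcal{K}_k^2) + C\!\int_{t_{k-1}}^{t_k}\!\|f - f^k\|_{\L^2}^2$; summing over $k$ and using Poincar\'e at $t_N$ yields $\mathcal{R}_\theta \le C(\Upsilon^2 + \mathcal{F}_1^2 + \mathcal{K}^2)$.

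It then remains to treat the leftover sum $\sum_{k=1}^{N-1}\big(\|u(t_k) - \mathcal{I}^{k+1}\tilde{u}_{h,c}^k\|_{\L^2}^2 - \|\tilde{e}_c(t_k)\|_{\L^2}^2\big)$. Since $\tilde{e}_c(t_k) = u(t_k) - \tilde{u}_{h,c}^k$, the identity $\|a\|^2 - \|b\|^2 = \|a-b\|^2 + 2(b,\,a-b)$ with $a - b = \tilde{u}_{h,c}^k - \mathcal{I}^{k+1}\tilde{u}_{h,c}^k$ turns each summand into $\|\tilde{u}_{h,c}^k - \mathcal{I}^{k+1}\tilde{u}_{h,c}^k\|_{\L^2}^2 + 2(\tilde{e}_c(t_k),\, \tilde{u}_{h,c}^k - \mathcal{I}^{k+1}\tilde{u}_{h,c}^k)$. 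Splitting $u_h^k - \mathcal{I}^{k+1}u_h^k = (u_{h,c}^k - \mathcal{I}^{k+1}u_{h,c}^k) + (u_{h,r}^k - \mathcal{I}^{k+1}u_{h,r}^k)$ and using the stability of the transfer operator $\mathcal{I}^{k+1}$ together with the inverse estimate relating $\|\cdot\|_{\L^2}$ over a patch to the weighted jumps $h_E(\tau_k)^{-2}\|[\![\cdot]\!]\|_{\L^2(E)}$ built into the time indicator $\Xi_k$ in \eqref{3.timeeff}, the conforming part is absorbed into $C\,\Xi^2$, the non-conforming part reproduces exactly the term $\sum_{k=1}^{N-1}\|u_{h,r}^k - \mathcal{I}^{k+1}\tilde{u}_{h,r}^k\|_{\L^2}^2$ in the statement, and the cross term is dispatched by Young's inequality, returning $\tfrac12\|\tilde{e}_c(t_k)\|_{\L^2}^2$ to cancel against the negative contribution in the sum. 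Collecting the three steps gives the asserted bound.

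I expect the telescoping/mesh-change step to be the main obstacle: one must verify that, after the Young splitting of the cross terms, the negative $-\|\tilde{e}_c(t_k)\|_{\L^2}^2$ contributions genuinely cancel rather than accumulate across $k$, and that the residual mesh-change quantities coincide precisely with the weighted jump terms already encoded in $\Xi_k$ in \eqref{3.timeeff} and with the remainder term $\|u_{h,r}^k - \mathcal{I}^{k+1}\tilde{u}_{h,r}^k\|_{\L^2}$. A secondary technical difficulty is propagating the elliptic-reconstruction bound for $\theta_c$ through the convolution so that the time-quadrature error of the memory term collapses exactly onto the kernel-oscillation indicator $\mathcal{K}$ of \eqref{3.dom}, as was done for the exact-solution memory term in \eqref{3.amem3}--\eqref{3.amem}.
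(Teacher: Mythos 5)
Your overall architecture is essentially the paper's: split the error into $\tilde e_c$, the elliptic-reconstruction error and the non-conforming remainder, control $\tilde e_c$ by Lemma \ref{3.Lemma5.1}, the remainder by the jump/nodal-averaging bound as in Lemma \ref{lemma3.3}, the reconstruction error by the stationary reliability chain of Theorem \ref{3.thm3.7}, and then dispose of the mesh-change sum. The paper does exactly this, invoking Lemma \ref{3.Lemma5.1}, the identity \cite[(5.59)--(5.60)]{EOJ}, and ``a methodology analogous to Theorem \ref{3.theorem4.3}''.

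The genuine gap is in your treatment of the telescoping sum, precisely the step you flagged as the main obstacle. Once you expand each summand with $\|a\|^2-\|b\|^2=\|a-b\|^2+2(b,a-b)$, the negative terms $-\|\tilde e_c(t_k)\|_{\L^2}^2$ are \emph{consumed}: they are what produced the right-hand side of that identity. So when you subsequently apply Young's inequality to the cross term $2\big(\tilde e_c(t_k),\,\tilde u_{h,c}^k-\mathcal{I}^{k+1}\tilde u_{h,c}^k\big)$, the resulting $\tfrac12\|\tilde e_c(t_k)\|_{\L^2}^2$ has nothing to cancel against --- the ``negative contribution in the sum'' has already been spent, and these intermediate-time errors accumulate over $k$. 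Since the left-hand side of Lemma \ref{3.Lemma5.1} controls $\|\tilde e_c\|_{\L^2}$ only at the final time $t_N$, the leftover $\sum_k\|\tilde e_c(t_k)\|_{\L^2}^2$ must instead be handled by a kick-back/discrete Gronwall argument: the estimate of Lemma \ref{3.Lemma5.1} holds with $t_N$ replaced by any $t_k$, so each $\|\tilde e_c(t_k)\|_{\L^2}^2$ is itself bounded by the estimators up to $t_k$ and absorbed into the constant $C$; this is the role of the identity \cite[(5.59)--(5.60)]{EOJ} combined with the Gronwall methodology of Theorem \ref{3.theorem4.3} in the paper's proof, and it is missing from your argument. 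A secondary loose end: your expansion produces $\tilde u_{h,c}^k-\mathcal{I}^{k+1}\tilde u_{h,c}^k$, whereas the cited identity and the term in the statement involve the non-conforming remainders $u_{h,r}^k-\mathcal{I}^{k+1}u_{h,r}^k$; passing from one to the other uses the specific interaction of the transfer operator with the conforming/non-conforming splitting, which you only gesture at through ``stability of $\mathcal{I}^{k+1}$''.
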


\begin{proof}
	Utilizing the decomposition $u_h^k = u_{h,c}^k +u_{h,r}^k$ and the identity \cite[(5.59)-(5.60)]{EOJ} given by
	\begin{align}
		\|u(t_k)- \mathcal{I}^{k+1}\tilde{u}_{h,c} ^k\|_{\L^2}^2-\|\tilde{e}_c(t_k)\|_{\L^2}^2 = \|u_{h,r}^k - \mathcal{I}^{k+1}u^k_{h,r}\|_{\L^2}^2 + \langle u^k_{h,r} - \mathcal{I}^{k+1}u_{h,r}^k, \tilde{e}_c(t_k)\rangle.
	\end{align}
	and invoking Lemma \ref{3.Lemma5.1}, this proof follows a methodology analogous to that of Theorem \ref{3.theorem4.3}.
\end{proof}	
\subsection{Crank–Nicolson Scheme}
Expanding on the previously introduced notations, we extend our framework to include the Crank-Nicolson (CN) Scheme in the temporal domain. The following notations are utilized for a comprehensive representation:
\begin{align*}
	\overline{\partial}u^n = \frac{u^n - u^{n-1}}{\tau_k}, \quad	t_{n-\frac{1}{2}} = \frac{t_n + t_{n-1}}{2}, \quad u^{n-\frac{1}{2}} := \frac{u(t_{n})+u(t_{n-1})}{2}.
\end{align*}
The discretization of the memory term can be implemented using the CN scheme as follows:
\begin{align}\label{3.CNmem}
	J(\psi) &= \int_0^{t} K(t-s)\psi(s) \mathrm{d}s 
	= \frac{1}{2} \sum_{j=1}^k \omega_{kj} \tau_j\psi^{j-\frac{1}{2}} ,
\end{align}
where $\omega_{k j}=\frac{1}{\tau_j\tau_k} \int_{t_{k-1}}^{t_k} \int_{t_{j-1}}^{\min \left(t, t_j\right)} K(t-s) \mathrm{~d}s \mathrm{~d}t$, for $1\leq k\leq N$ and 
$\psi^{j-\frac{1}{2}} = \frac{\psi(t_{j})+ \psi(t_{j-1})}{2}  $  in $(t_{j-1},t_{j})$.
The fully discretized weak formulation of the system (\ref{3.GBHE}) in this context can be expressed as follows: Given $u_h^{k-1}$, seek $u_h^k \in V_h^k$ such that
\begin{align}\label{2.ncweakformfdCN}
	\nonumber	\left(\frac{u_h^k-u_h^{k-1}}{\tau_k}, \chi\right) + \mathcal{A}_{DG}(u_h^{k-\frac{1}{2}},\chi)+\eta \left(\sum_{j=1}^{k}\omega_{kj}\tau_ja_{DG}(u_h^{j-\frac{1}{2}}, \chi) \right)= (f^{k-\frac{1}{2}},\chi),\\
	(u_h(0),\chi)=(u_h^0,\chi), \quad \forall~~ \chi \in V^k_h.
\end{align}
where, $\omega_{k j}$ is as defined in \eqref{3.CNmem}, for $1\leq k\leq N$,  and 
\begin{align}
	\mathcal{A}_{DG}(u_h^{k-\frac{1}{2}},v) &= \frac{\nu}{2} a_{DG}\left(u_h^k+u_h^{k-1},v\right) +  \frac{\alpha }{2}\left(b_{DG}(u_h^k,u_h^k,v) + b_{DG}(u_h^{k-1},u_h^{k-1},v)\right)\\&\quad+ \frac{\beta}{2}\left(c(u_h^k)+ c(u_h^{k-1}),v\right)
\end{align}
Again, the partition $\mathcal{T}_{h,k}$ may undergo either coarsening or refinement compared to $\mathcal{T}_{h,k-1}$. To account for this, we define the transfer operator $\mathcal{I}^k: V_h^{k-1} \rightarrow V_h^k$. The general fully discrete formulation is then expressed as: 
\begin{align}
	\nonumber	&\left(\frac{u_h^k-\mathcal{I}^ku_h^{k-1}}{\tau_k}, \chi\right) + \mathcal{A}_{DG}(u_h^{k-\frac{1}{2}},\chi)	+\eta \left(\sum_{j=1}^{k}\omega_{kj}\tau_ja_{DG}(\mathcal{I}^{j+1}u_h^{j-\frac{1}{2}}, \chi) \right)= (f^{k-\frac{1}{2}},\chi), \quad \forall~~ \chi\in V_h^n.
\end{align}
Similar to the backward Euler discretization case, the time indicator $\Xi_k$ at each time $t_k$ is given by
$$\Xi_k = \tau_k \Big(\|u_h^k-\mathcal{I}^ku_h^{k-1}\|_{\H^1(\mathcal{T}_{h,k})}+ h_E(\tau_k)^{-2}\|~\![\![\mathcal{I}^ku_h^{k-1}-u_h^{k-1}]\!]~\|_{
	\L^2(E)} + h_E(\tau_k)^{-2}\|~\![\![u_h^k - \mathcal{I}^ku_h^{k-1}]\!]~\|_{\L^2(E)} \Big),$$
and the data oscillation term corresponding to memory is expressed as
\begin{align}\label{3.dom6}
	\mathcal{K}_k^2 = 	\eta^2\sum_{k=1}^N \int_{t_{k-1}}^{t_k} \Big\|\left( \sum_{j=1}^{k}\frac{1}{\tau_k\tau_j } \int_{t_{k-1}}^{t_k} \int_{t_{j-1}}^{\min \left(t, t_j\right)} K(t-s)\tau_j\mathrm{~d}s \mathrm{~d}t-\sum_{j=1}^k\int_{t_{j-1}}^{\min \left(t, t_j\right)}K(t-s)\right) \nabla\tilde{u}_h^{j-\frac{1}{2}} \mathrm{~d}s\Big\|^2.
\end{align}
Hence, the definitions of the accumulated time and spectral error indicators are as follows:
\begin{align}
	\label{3.timeestimatorCN1}	\Xi^2 &= \sum_{k=1}^n \Xi_k^2, \quad \Upsilon^2 = \sum_{k=1}^N\tau_k \Big(\Upsilon_k(u_h^k)+ \Upsilon_k(\mathcal{I}^ku_h^{k-1})\Big), \\ \label{3.timeestimatorCN2}\mathcal{K}^2 &= \sum\limits_{k=1}^N\mathcal{K}_k^2, \quad \mathcal{F}_2^2 = \sum\limits_{k=1}^N\int_{t_{k-1}}^{t_{k}}\|f-f^{k-\frac{1}{2}}\|_{\L^2}^2.
\end{align}
where the terms within $\Upsilon$ constitute the estimator defined in the steady case at a given time step $t_k$, such that
$$\Upsilon_k^2(u_h^k)= \Upsilon^2_{K,k} + \Upsilon^2_{E,k} + \Upsilon^2_{J,k},$$ where,
\begin{align}
	&\Upsilon^2_{K,k} := h_K^2\Big(\|R_K^k\|_{\L^2(K)}^2\Big), \quad \Upsilon^2_{E,k} = \sum_{E\in \partial K} h_E(\|R_{E_1}^k\|_{\L^2(E)}^2+\|R_{E_2}^k\|_{\L^2(E)}^2),\\&\quad \Upsilon^2_{J,k} := \sum\limits_{E \in  \mathcal{E}_h} \frac{\mathfrak{K}_h}{2}\Big\|[\![ \nu (u_h^k + u_h^{k-1})]\!]\Big\|_{\L^2(E)}^2 + \sum\limits_{E \in  \mathcal{E}_h}\mathfrak{K}_h\Big\| \eta\sum\limits_{j=1}^{k}\omega_{kj} \tau_k [\![ \nabla_h \mathcal{I}^{j+1}u^{j-\frac{1}{2}}_h]\!]\Big\|_{\L^2(E)}^2, 
\end{align}
and the element-wise and edge-wise residual is given as:
\begin{align}
	R_K^k &:= \Big\{{\frac{u_h^k - \mathcal{I}^ku_h^{k-1}}{\tau_k}}+f_h^{k-\frac{1}{2}}+\nu\Delta u^{k-\frac{1}{2}}_h- \frac{\alpha}{2} \left( (u^k_h)^{\delta}\sum\limits_{i=1}^d\frac{\partial u^k_h}{\partial x_i} + (u^{k-1}_h)^{\delta}\sum\limits_{i=1}^d\frac{\partial u^{k-1}_h}{\partial x_i} \right)\\&\hspace{7mm}+ \eta\sum_{j=1}^{k}\omega_{kj} \tau_ja_{DG}(\Delta \mathcal{I}^{j+1}u_h^{j}, \chi)\\&\hspace{7mm}+\frac{\beta }{2}\left(u^k_h(1-(u^k_h)^{\delta})((u_h^k)^{\delta}-\gamma) + u^{k-1}_h(1-(u^{k-1}_h)^{\delta})((u_h^{k-1})^{\delta}-\gamma)\right)\Big\}\Big|_K,\\
	R_{E_1}^k&:= \begin{cases}\frac{1}{4} \big[\!\big[\big(\nu\nabla_h (u^k_h + u^{k-1}_h) \cdot\n\big]\!\big] \quad \text { for } E \in \mathcal{E}^i_h, \\
		0 ~\hspace{37mm}\text {for } E \in  \mathcal{E}_h^{\partial}, \end{cases}
	\\ R_{E_2}^k&:= \begin{cases}\frac{1}{4} \eta\sum\limits_{j=1}^{k}\omega_{kj}\tau_j \big[\!\big[\big( \nabla_h \mathcal{I}^{j+1}u^{j-\frac{1}{2}}_h\big)\cdot\n\big]\!\big] \quad \text { for } E \in \mathcal{E}^i_h, \\
		0 ~\hspace{49mm}\text {for } E \in  \mathcal{E}_h^{\partial}. \end{cases}
\end{align}
For the Crank- Nicolson scheme, the interpolation \eqref{3.tauk} can be rewritten as,
for each $t\in (t_{k-1},t_{k})$, as
\begin{align}\label{3.CNliapp}
	u_{\tau_k}(t) = l_{k-1}  u_h^{k-1}  + l_{k}u_h^k &= u_h^{k-\frac{1}{2}} + (t-t_{n-\frac{1}{2}})\overline{\partial}u_h^k,
\end{align}
for $0\leq k\leq N$.
In the preceding scenario, we address the task of identifying $\tilde{u}^{k-\frac{1}{2}} \in \H_0^1(\Omega)$, aiming to accomplish the following:
\begin{align}\label{3.CNfdapp}
	(\partial_t \tilde{u}_h(t), v) + \tilde{\mathcal{A}}_{DG}(\tilde{u}^{k-\frac{1}{2}},v)+ \eta \Big(\sum_{j=1}^{k}\omega_{kj}\tau_j a_{DG}(\tilde{u}_h^{j-\frac{1}{2}}, v) \Big) =(f^{k-\frac{1}{2}},v), \quad \forall ~~ v\in \H_0^1(\Omega). 
\end{align}

\begin{lemma}\label{3.CNLemma5.1}
	The following estimates holds:
	\begin{align}
		&\|\tilde{e}_c(t_N)\|_{\L^2}^2 + \int_0^{t_N}|\!|\!| \tilde{e}_c(t)|\!|\!|^2 \mathrm{d}t \\&\leq C(\Xi^2 + \Upsilon^2 +\mathcal{F}_2^2+\mathcal{K}^2) +\|\tilde{e}_c(0)\|_{\L^2}^2 + \sum_{k=1}^{N-1}\Big(\|u(t_k)- \mathcal{I}^{k+1}\tilde{u}_{h,c} ^k\|_{\L^2}^2-\|\tilde{e}_c(t_k)\|_{\L^2}^2\Big),
	\end{align}
	where $\mathcal{F}_2^2$ and $\mathcal{K}^2$ are defined in \eqref{3.timeestimatorCN2}
\end{lemma}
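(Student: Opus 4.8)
The argument mirrors the backward Euler case of Lemma \ref{3.Lemma5.1}, with the Crank--Nicolson-specific midpoint quantities replacing their first-order analogues. The plan is to subtract the continuous weak formulation \eqref{3.weakGBHE} from the elliptic-reconstruction problem \eqref{3.CNfdapp}, test with $v = \tilde{e}_c = u - \tilde{u}_{h,c}$, and exploit the coercivity/monotonicity estimate of Lemma \ref{lemma3.2} together with the nonlinear bound of Lemma \ref{lemma3.6} and the positivity \eqref{pk} of the kernel. Concretely, after the subtraction one obtains
\begin{align*}
	(\partial_t \tilde{e}_c, \tilde{e}_c) + \tilde{\mathcal{A}}_{DG}(u,\tilde{e}_c) - \tilde{\mathcal{A}}_{DG}(\tilde{u}_{h,c}, \tilde{e}_c) + \eta((K*\nabla \tilde{e}_c),\nabla \tilde{e}_c)
\end{align*}
on the left, and on the right the data-oscillation term $(f - f^{k-\frac{1}{2}}, \tilde{e}_c)$, the term $-(\partial_t \tilde{u}_{h,r}, \tilde{e}_c)$ coming from the nonconforming remainder, the elliptic defect $\tilde{\mathcal{A}}_{DG}(\tilde{u}^{k-\frac12},\tilde{e}_c) - \tilde{\mathcal{A}}_{DG}(\tilde{u}_{h,c},\tilde{e}_c)$, and the memory mismatch $\eta(\sum_j \omega_{kj}\tau_j a_{DG}^c(\tilde{u}_h^{j-\frac12},\tilde{e}_c)) - \eta((K*\nabla\tilde{u}_{h,c}),\nabla\tilde{e}_c)$. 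Applying Lemmas \ref{lemma3.2} and \ref{lemma3.6} to the left-hand nonlinear differences, Cauchy--Schwarz and Young's inequality to the right, and integrating over $[0,t_N]$ gives a bound of the same shape as \eqref{3.68}, but now with $f^{k-\frac12}$, $\tilde{u}^{k-\frac12}$, $\tilde{u}_{\tau_k}$ as in \eqref{3.CNliapp}, and $\tilde{u}_h^{j-\frac12}$ in place of the backward-Euler quantities.

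Next I would dissect the memory-term contribution exactly as in \eqref{3.amem3}--\eqref{3.amem5}: split $\eta(K*\nabla\tilde{u}_{h,r})$ off using $K \in \L^1(0,T)$ to get $C|\!|\!|\tilde{u}_{h,r}|\!|\!|^2$, and split the quadrature error $\eta^2\|\sum_j\omega_{kj}\tau_j\nabla\tilde{u}_h^{j-\frac12} - (K*\nabla\tilde{u}_h^{j-\frac12})\|^2$ as the data-oscillation term $\mathcal{K}_k^2$ from \eqref{3.dom6}, plus the interpolation-in-time error $\eta^2\|(K*\nabla\tilde{u}_h^{j-\frac12}) - (K*\nabla\tilde{u}_h(s))\|^2$. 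For the latter, Young's convolution inequality and interchange of summation reduce it — just as in \eqref{3.amem} — to a sum of the form $C\eta^2\sum_k \tau_k |\!|\!| \tilde{u}_h^k - \mathcal{I}^k\tilde{u}_h^{k-1}|\!|\!|^2$, which is absorbed into $\Xi^2$. Then the term $\int_{t_{k-1}}^{t_k}|\!|\!|\tilde{u}^{k-\frac12} - \tilde{u}_{\tau_k}|\!|\!|^2$ is handled by the triangle inequality together with Theorem \ref{3.thm3.7} (applied at the time levels $t_k$, $t_{k-1}$, to pass from $\tilde{u}^{k-\frac12}$ to the discrete $u_h^{k-\frac12}$), giving $\Upsilon_k^2$ plus lower-order interpolation-transfer terms, and similarly $|\!|\!|\tilde{u}_{\tau_k} - \tilde{u}_{h,c}|\!|\!|^2$ and $|\!|\!|\tilde{u}_{h,r}|\!|\!|^2$ are controlled by $\Upsilon^2$ and $\Xi^2$ via Lemma \ref{lemma4.1} and Lemma \ref{lemma3.3}. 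Collecting everything yields the stated bound with the telescoping mesh-change remainder $\sum_{k=1}^{N-1}(\|u(t_k) - \mathcal{I}^{k+1}\tilde{u}_{h,c}^k\|_{\L^2}^2 - \|\tilde{e}_c(t_k)\|_{\L^2}^2)$ left intact, to be resolved in the subsequent theorem exactly as in Theorem \ref{3.Theorem5.2}.

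The main obstacle is the bookkeeping around the Crank--Nicolson midpoint structure: unlike backward Euler, the consistency of the scheme relies on the second-order cancellation at $t_{n-\frac12}$, so one must be careful that the split $|\!|\!|\tilde{u}^{k-\frac12} - \tilde{u}_{\tau_k}|\!|\!|^2 \le \tau_k|\!|\!|\tilde{u}^k - \tilde{u}^{k-1}|\!|\!|^2$ (valid because $\tilde{u}_{\tau_k}$ in \eqref{3.CNliapp} is still the \emph{linear} interpolant through $t_{k-1},t_k$) is used rather than a pointwise midpoint comparison, so that no extra factor of $\tau_k$ regularity in time is needed. The memory quadrature weights $\omega_{kj}$ in \eqref{3.CNmem} carry the same $\tfrac12$ factor as in the scheme, and tracking this constant consistently through \eqref{3.amem3}--\eqref{3.amem} is the delicate part; aside from that, every estimate is a direct transcription of the backward-Euler proof of Lemma \ref{3.Lemma5.1}.
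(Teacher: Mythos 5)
Your proposal follows essentially the same route as the paper's proof: subtract \eqref{3.weakGBHE} from \eqref{3.CNfdapp}, test with $\tilde{e}_c$, invoke Lemmas \ref{lemma3.2} and \ref{lemma3.6} with the kernel positivity and Young's inequality, treat the memory quadrature and convolution mismatch exactly as in \eqref{3.amem3}--\eqref{3.amem5} (yielding $\mathcal{K}^2$ and the $\Xi^2$-type term), and bound $|\!|\!|\tilde{u}^{k-\frac12}-\tilde{u}_{\tau_k}|\!|\!|$ via the linear interpolant \eqref{3.CNliapp} and Theorem \ref{3.thm3.7}. This is precisely the paper's argument, so the proposal is correct and matches it in structure and in all essential estimates.
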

\begin{proof}
	By deducting \eqref{3.weakGBHE} from \eqref{3.CNfdapp}, we obtain
	\begin{align}
		&\partial_t(u - \tilde{u}_h), v) +\tilde{ \mathcal{A}}_{DG}(u,v) -  \tilde{\mathcal{A}}_{DG}(\tilde{u}^{k-\frac{1}{2}},v) + \eta(( K*\nabla u)(s),\nabla v) \\&- \eta \Big(\sum_{j=1}^{k}\omega_{kj}\tau_ja_{DG}(\tilde{u}^{j-\frac{1}{2}}, v) \Big)  =(f -f^{k-\frac{1}{2}},v),
	\end{align} 
	Using the above notations, and choosing $v = \tilde{e}_c,$ we have
	\begin{align*}
		&(\partial_t\tilde{e}_c, \tilde{e}_c) + \tilde{\mathcal{A}}_{DG}(u,\tilde{e}_c) - \tilde{\mathcal{A}}_{DG}(\tilde{u}_{h,c},\tilde{e}_c)  + \eta(( K*\nabla \tilde{e}_c)(s),\nabla  \tilde{e}_c) =   (f -f^{k-\frac{1}{2}},\tilde{e}_c)-(\partial_t\tilde{u}_{h,r}, \tilde{e}_c) \\&+ \eta \Big(\sum_{j=1}^{k}\omega_{kj}\tau_ja_{DG}(\tilde{u}^{j-\frac{1}{2}}, \tilde{e}_c) \Big)  - \eta(( K*\nabla\tilde{u}_{h,c})(s),\nabla  \tilde{e}_c)  +  \tilde{\mathcal{A}}_{DG}(\tilde{u}^{k-\frac{1}{2}},\tilde{e}_c) - \tilde{\mathcal{A}_{DG}}(\tilde{u}_{h,c},\tilde{e}_c).
	\end{align*}
	Utilizing Lemma \ref{lemma3.2}, along with Lemma \ref{lemma3.6} and the Cauchy-Schwarz inequality, yields:
	\begin{align*}
		&\frac{1}{2}\frac{\mathrm{d}}{\mathrm{d}t}\|\tilde{e}_c\|_{\L^2}^2 +  |\!|\!| \tilde{e}_c|\!|\!|^2+ \eta(( K*\nabla \tilde{e}_c),\nabla  \tilde{e}_c)  \\&\leq C\Big(\|f-f^{k-\frac{1}{2}}\|_{\L^2} + \|\partial_t\tilde{u}_{h,r} \|_{\L^2} + |\!|\!|  \tilde{u}^{k-\frac{1}{2}} - \tilde{u}_{h,c}|\!|\!| \Big)|\!|\!| \tilde{e}_c|\!|\!| +  \eta\| K*\nabla\tilde{u}_{h,r}\|_{\L^2}   |\!|\!| \tilde{e}_c|\!|\!|  \\&\quad	+ \eta \Big\|\sum_{j=1}^{k}\omega_{kj}\tau_j\nabla\tilde{u}_h^{j-\frac{1}{2}}-( K*\nabla\tilde{u}_{h})\Big\||\!|\!| \tilde{e}_c|\!|\!|.
	\end{align*}
	Employing Young's inequality, integrating over the time interval $t\in[0,t_N]$, and leveraging the positivity of the kernel $K(t)$, we attain:
	\begin{align}
		&\|\tilde{e}_c(t_N)\|_{\L^2}^2 + \int_0^{t_N}|\!|\!| \tilde{e}_c(t)|\!|\!|^2 \mathrm{d}t \\&\leq \|\tilde{e}_c(0)\|_{\L^2}^2 + \sum_{k=1}^N \int_{t_{k-1}}^{t_k} \Big( \|f-f^{k-\frac{1}{2}}\|^2_{\L^2}+ \|\partial_t\tilde{u}_{h,r} \|_{\L^2}^2  +|\!|\!|  \tilde{u}^{k-\frac{1}{2}} - \tilde{u}_{\tau_k} |\!|\!|^2 + |\!|\!|   \tilde{u}_{\tau_k} -\tilde{u}_{h,c} |\!|\!|^2\Big) \\&\quad +\frac{1}{2}\sum_{k=1}^{N-1}\Big(\|u(t_k)- \mathcal{I}^{k+1}\tilde{u}_{h,c} ^k\|_{\L^2}^2-\|\tilde{e}_c(t_k)\|_{\L^2}^2\Big)+ \eta^2\sum_{k=1}^N \int_{t_{k-1}}^{t_k} \| K*\nabla\tilde{u}_{h,r}\|_{\L^2}^2  \\&\quad+ \sum_{k=1}^N \eta^2\int_{t_{k-1}}^{t_k}  \Big\|\sum_{j=1}^{k}\omega_{kj}\tau_j\nabla\tilde{u}_h^{j-\frac{1}{2}}-( K*\nabla\tilde{u}_{h}^{j-\frac{1}{2}})\Big\|^2+ \eta^2\sum_{k=1}^N \int_{t_{k-1}}^{t_k} \Big\|( K*\nabla\tilde{u}_{h}^{j-\frac{1}{2}})-( K*\nabla\tilde{u}_{h}(s))\Big\|^2.
	\end{align}
	Estimating the memory term in a manner analogous to \eqref{3.amem}, we achieve:
	\begin{align}
		&\eta^2\sum_{k=1}^N \int_{t_{k-1}}^{t_k} \Big\|( K*\nabla\tilde{u}_{h}^{j-\frac{1}{2}})-( K*\nabla\tilde{u}_{h})(s)\Big\|^2\\&\leq \eta^2  \sum_{k=1}^N \int_{t_{k-1}}^{t_k} \left(\int_0^TK(t-s)\mathrm{~d}s\right)^2\left( \sum_{j=1}^k\int^{t_j}_{t_{j-1}}\| \nabla\tilde{u}_{h}^{j-\frac{1}{2}}-\nabla\tilde{u}_{h}(s)\|\right) ^2\\\nonumber&\leq \eta^2  \sum_{k=1}^N \int_{t_{k-1}}^{t_k}\frac{\tau_j}{2}\sum_{j=1}^k|\!|\!| \tilde{u}_{h}^j-\mathcal{I}^j\tilde{u}_{h}^{j-1} |\!|\!|^2\\&\leq \frac{\eta^2}{2}  \sum_{k=1}^N \tau_k|\!|\!| \tilde{u}_{h}^k-\mathcal{I}^k\tilde{u}_{h}^{k-1} |\!|\!|^2,
	\end{align}
	Consolidating the obtained estimates, we get:
	\begin{align}
		&\nonumber\|\tilde{e}_c(t_N)\|_{\L^2}^2 + \int_0^{t_N}|\!|\!| \tilde{e}_c(t)|\!|\!|^2 \mathrm{d}t \\&\nonumber\leq \|\tilde{e}_c(0)\|_{\L^2}^2 + \sum_{k=1}^N \int_{t_{k-1}}^{t_k} \Big( \|f-f^{k-\frac{1}{2}}\|^2_{\L^2}+ \|\partial_t\tilde{u}_{h,r} \|_{\L^2}^2  +|\!|\!|  \tilde{u}^{k-\frac{1}{2}} - \tilde{u}_{\tau_k} |\!|\!|^2 + |\!|\!|   \tilde{u}_{\tau_k} -\tilde{u}_{h,c} |\!|\!|^2\Big) \\&\quad +\frac{1}{2}\sum_{k=1}^{N-1}\Big(\|u(t_k)- \mathcal{I}^{k+1}\tilde{u}_{h,c} ^k\|_{\L^2}^2-\|\tilde{e}_c(t_k)\|_{\L^2}^2\Big) + C\eta^2  \sum_{k=1}^N \tau_k|\!|\!| \tilde{u}_{h}^k-\mathcal{I}^k\tilde{u}_{h}^{k-1} |\!|\!|^2\\&\nonumber\quad +\frac{\eta^2}{2}\sum_{k=1}^N \int_{t_{k-1}}^{t_k} \Big\|\left( \sum_{j=1}^{k}\frac{1}{\tau_k\tau_j } \int_{t_{k-1}}^{t_k} \int_{t_{j-1}}^{\min \left(t, t_j\right)} K(t-s)\tau_j\mathrm{~d}s \mathrm{~d}t-\sum_{j=1}^k\int_{t_{j-1}}^{\min \left(t, t_j\right)}K(t-s)\right) \nabla\tilde{u}_h^{j-\frac{1}{2}} \mathrm{~d}s\Big\|^2.
	\end{align} 
	Employing the definition of $\tilde{u}_{\tau_k}$ as stipulated in \eqref{3.CNliapp}, and applying the triangle inequality, we derive:
	\begin{align}
		\int_{t_{k-1}}^{t_k} |\!|\!|  \tilde{u}^{k-\frac{1}{2}} - \tilde{u}_{\tau_k} |\!|\!|^2 \leq \frac{\tau_k}{2} |\!|\!| \tilde{u}^k - \tilde{u}^{k-1} |\!|\!|^2 \leq C\left(  \Upsilon_k^2 + \tau_k |\!|\!| \tilde{u}^k - \tilde{u}_h^{k} |\!|\!|^2 + \tau_k |\!|\!| \tilde{u}^{k-1} - \mathcal{I}^k\tilde{u_h}^{k-1} |\!|\!|^2\right).
	\end{align}
	Synthesizing the outcomes with the findings of Theorem \ref{3.thm3.7} leads to:
	\begin{align}
		\|\tilde{e}_c(t_N)\|_{\L^2}^2 + \int_0^{t_N}|\!|\!| \tilde{e}_c(t)|\!|\!|^2 \mathrm{d}t \leq &C(\Xi^2 + \Upsilon^2+ \mathcal{F}_2+\mathcal{K}^2+\|\tilde{e}_c(0)\|_{\L^2}^2\\
		&+ \sum_{k=1}^{N-1}\Big(\|u(t_k)- \mathcal{I}^{k+1}\tilde{u}_{h,c} ^k\|_{\L^2}^2-\|\tilde{e}_c(t_k)\|_{\L^2}^2\Big). 
	\end{align}
\end{proof}
\begin{theorem}\label{3.Theorem5.4}
	Let $u$ and $u_h$ represent the exact solution and the discrete solution, respectively, of the equation \eqref{3.GBHE}. Consider $\Xi$ and $\Upsilon$ as the a posteriori error estimators defined in \eqref{3.timeestimator1}. The following reliability estimator is established:
	\begin{align}
		\|e(t_N)\|_{\L^2}^2 + \int_0^{t_N}|\!|\!| e(t)|\!|\!|^2 \mathrm{d}t &\leq C\Big(\Xi^2 + \Upsilon^2+ \mathcal{F}_2^2+\mathcal{K}^2+\|\tilde{e}_c(0)\|_{\L^2}^2 + \sum_{k=1}^{N-1}\|u_{h,r}^k- \mathcal{I}^{k+1}\tilde{u}_{h,r} ^k\|_{\L^2}^2\Big),
	\end{align}
	where 	$\mathcal{F}_2^2$ and $\mathcal{K}^2$ are defined in \eqref{3.timeestimatorCN2}.
\end{theorem}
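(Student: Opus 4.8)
The plan is to follow the proof of Theorem \ref{3.Theorem5.2} almost verbatim, replacing the backward Euler quantities by their Crank--Nicolson analogues. First I would split the fully discrete solution at each time level as $u_h^k = u_{h,c}^k + u_{h,r}^k$, with $u_{h,c}^k \in V_h^k \cap \H_0^1(\Omega)$ the conforming part produced by the nodal averaging operator and $u_{h,r}^k$ the non-conforming remainder, and then form the piecewise-linear-in-time interpolants $u_{h,c}(t)$, $u_{h,r}(t)$, $\tilde{u}_{h,c}(t)$, $\tilde{u}_{h,r}(t)$ in the manner of \eqref{3.CNliapp}. Combining the elliptic reconstruction \eqref{3.CNfdapp} with this splitting exactly as in Theorem \ref{3.Theorem5.2}, the triangle inequality reduces the assertion to the estimate for $\|\tilde{e}_c(t_N)\|_{\L^2}^2 + \int_0^{t_N}|\!|\!|\tilde{e}_c(t)|\!|\!|^2\,\mathrm{d}t$ supplied by Lemma \ref{3.CNLemma5.1}, with $\tilde{e}_c = u - \tilde{u}_{h,c}$, together with control of the non-conforming contributions $|\!|\!| u_{h,r}|\!|\!|$, $\|\partial_t\tilde{u}_{h,r}\|_{\L^2}$ and of the elliptic-reconstruction discrepancy $|\!|\!| \tilde{u}_h - \tilde{u}_{h,c}|\!|\!|$.

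The one genuinely new algebraic manipulation is the treatment of the telescoping remainder left over in Lemma \ref{3.CNLemma5.1}. Using $u_h^k = u_{h,c}^k + u_{h,r}^k$ and the identity \cite[(5.59)--(5.60)]{EOJ},
\[
\|u(t_k) - \mathcal{I}^{k+1}\tilde{u}_{h,c}^k\|_{\L^2}^2 - \|\tilde{e}_c(t_k)\|_{\L^2}^2 = \|u_{h,r}^k - \mathcal{I}^{k+1}u_{h,r}^k\|_{\L^2}^2 + \langle u_{h,r}^k - \mathcal{I}^{k+1}u_{h,r}^k,\ \tilde{e}_c(t_k)\rangle ,
\]
I would absorb the cross term by Young's inequality: the resulting $\tfrac12\|\tilde{e}_c(t_k)\|_{\L^2}^2$ cancels the negative contribution already present on the right-hand side, while the leftover $\|u_{h,r}^k - \mathcal{I}^{k+1}u_{h,r}^k\|_{\L^2}^2$ is a mesh-transfer term subsumed, up to constants, into $\Xi^2$ through \eqref{3.timeeff} and into the remainder sum in the statement.

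To finish, I would bound the non-conforming pieces as in the semi-discrete and backward Euler cases: $|\!|\!| u_{h,r}|\!|\!|$ is controlled by the jump estimator via Lemma \ref{lemma3.3}, $\partial_t\tilde{u}_{h,r}$ contributes jump terms in $u_h^k - \mathcal{I}^k u_h^{k-1}$ already present in $\Xi_k$, and $|\!|\!| \tilde{u}_h - \tilde{u}_{h,c}|\!|\!|$ is estimated by $\Upsilon$ via Theorem \ref{3.thm3.7}, as used inside Lemma \ref{3.CNLemma5.1}. Summing over $k$, applying the discrete Gronwall inequality --- legitimate since the kernel positivity \eqref{pk} keeps the memory term nonnegative on the left and hence discardable --- and invoking Lemma \ref{3.CNLemma5.1}, one obtains
\[
\|e(t_N)\|_{\L^2}^2 + \int_0^{t_N}|\!|\!| e(t)|\!|\!|^2\,\mathrm{d}t \leq C\Big(\Xi^2 + \Upsilon^2 + \mathcal{F}_2^2 + \mathcal{K}^2 + \|\tilde{e}_c(0)\|_{\L^2}^2 + \sum_{k=1}^{N-1}\|u_{h,r}^k - \mathcal{I}^{k+1}\tilde{u}_{h,r}^k\|_{\L^2}^2\Big) ,
\]
which is the claim. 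The main obstacle is bookkeeping: the Crank--Nicolson memory quadrature involves the midpoint states $\tilde{u}_h^{j-\frac12}$, so the consistency error between $\sum_{j}\omega_{kj}\tau_j\nabla\tilde{u}_h^{j-\frac12}$ and $(K*\nabla\tilde{u}_h)(t)$ and the data-oscillation term $\mathcal{K}^2$ from \eqref{3.dom6} have to be carried through with the correct factors of $\tfrac12$; but this is precisely what Lemma \ref{3.CNLemma5.1} already handles, so once that lemma is available the present theorem follows by the same telescoping-plus-Gronwall argument as Theorem \ref{3.Theorem5.2}.
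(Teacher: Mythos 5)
Your proposal is correct and follows essentially the same route as the paper: decompose $u_h^k = u_{h,c}^k + u_{h,r}^k$, invoke the identity from \cite[(5.59)--(5.60)]{EOJ} to handle the telescoping term (absorbing the cross term), apply Lemma \ref{3.CNLemma5.1} for the conforming error, and control the non-conforming remainder and reconstruction discrepancy as in the semi-discrete argument of Theorem \ref{3.theorem4.3}. The paper's own proof is simply a terser statement of this same strategy, so no further comparison is needed.
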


\begin{proof}
	Utilizing the decomposition $u_h^k = u_{h,c}^k +u_{h,r}^k$ and the identity \cite[(5.59)-(5.60)]{EOJ} given by
	\begin{align}
		\|u(t_k)- \mathcal{I}^{k+1}\tilde{u}_{h,c} ^k\|_{\L^2}^2-\|\tilde{e}_c(t_k)\|_{\L^2}^2 = \|u_{h,r}^k - \mathcal{I}^{k+1}u^k_{h,r}\|_{\L^2}^2 + \langle u^k_{h,r} - \mathcal{I}^{k+1}u_{h,r}^k, \tilde{e}_c(t_k)\rangle.
	\end{align}
	and invoking Lemma \ref{3.Lemma5.1}, this proof follows a methodology analogous to that of Theorem \ref{3.theorem4.3}.
\end{proof}	%
%
%
%
%
%
%
%
%
%
%
%
\section{Numerical Studies}\label{sec6}
The computational analysis plays a pivotal role in affirming the practical utility of the derived results, bolstering confidence in the theoretical framework expounded in the preceding sections. In this section, we validate the theoretical findings through numerical experiments. All computational procedures are executed using the open-source finite element library FEniCS \cite{ABJ}. Our primary focus involves investigating the convergence patterns on both uniform and adaptive meshes, facilitated by employing an a posteriori error estimator. To elaborate, we initiate the process with an initial mesh and iterate through a refinement loop, consisting of the following stpdf:
$$\text{\textbf{Solve}} \rightarrow  \text{\textbf{Estimate}} \rightarrow  \text{\textbf{Mark}} \rightarrow  \text{\textbf{Refine}}.$$
This iterative refinement loop enables the systematic construction of adaptively refined sequences, thereby improving the efficiency and reliability of our numerical simulations.\\

\textbf{Example 1:} \textbf{Accuracy verification:}  We initiate our numerical section by considering a smooth solution in benchmark problems, where the initial data $u_0$ and external forcing $f$ are manufactured from the exact solution $u$ in $d$ dimensions $(d = 2, 3)$ \bl{under uniform discretization of the mesh $\Omega = (0,1)^d\times[0,1]$.} The error plots in the energy norm along with the efficiency of the SGBHE and GBHE with weakly singular kernel under different cases, are presented. For this example, we set the parameters as $\alpha = \beta = \delta = \nu = 1$, and $\gamma = 0.5$. The choice of solutions are given by 
\begin{align}
	\underbrace{u =\prod_{i=1}^{d}\sin(\pi x_i)}_{SGBHE}, \quad \quad  \underbrace{u = (t^3 - t^2 + 1) \prod_{i=1}^{d}\sin(\pi x_i)}_{GBHE}
\end{align}
The calculated quantities includes:
\begin{itemize}
	\item The DG spatial error norm is given by $ 	|\!|\!| e|\!|\!|$ where 
	\begin{align}\label{3.dgnorm1}
		\text{Total error} =	|\!|\!| e |\!|\!|^2:= \sum_{K\in\mathcal{T}_h}\|\nabla_h e\|_{\L^2(\mathcal{T}_h)}^2 + \sum_{E\in\mathcal{E}_h}\mathfrak{K}_h\|[\![v]\!]\|_{\L^2(E)}^2.
	\end{align}
	\item The error indicator for SGBHE ($\zeta$):
	\begin{align}\label{3.errind}
		\zeta=\Big(\sum_{K \in \mathcal{T}_h} \zeta_K^2\Big)^{1 / 2}. 
	\end{align}
	where $\zeta_K$ are as defined in \eqref{3.sgbheest}.
	\item For the GBHE, the time dependent norm is given by:  
	
	$$ \text{Total error} = \|e(t_N)\|_{\L^{\infty}(0,T;\L^2(\Omega))}^2 +  \int_0^{t_N}|\!|\!| e(t)|\!|\!|^2 \mathrm{d}t .$$
	\item The error indicator for the time dependent case is the sum of the accumulated time and spectral error indicator as defined in \eqref{3.timeestimator1} for the backward Euler discretization in time and \eqref{3.timeestimatorCN1} for the CN discretization in time. So , the combined indicator is given as
	\begin{align}\label{3.timeind}
		\zeta = \left(\Xi^2 + \Upsilon^2\right)^{1 / 2}. 
	\end{align}
	\item The rate of convergence $r$ and the efficiency $eff$ are calculated using:
	\begin{align*}
		r = \log(|\!|\!|e_i|\!|\!|/|\!|\!|e_{i+1}|\!|\!|)/\log(h_i/h_{i+1}),\quad eff = \frac{error~indictor}{Total~ error},
	\end{align*}
	where $e_i$ represents errors corresponding to mesh discretization parameters $h_i$.
\end{itemize}
\begin{figure}[ht!]
	\begin{center}
		\includegraphics[width=0.490\textwidth]{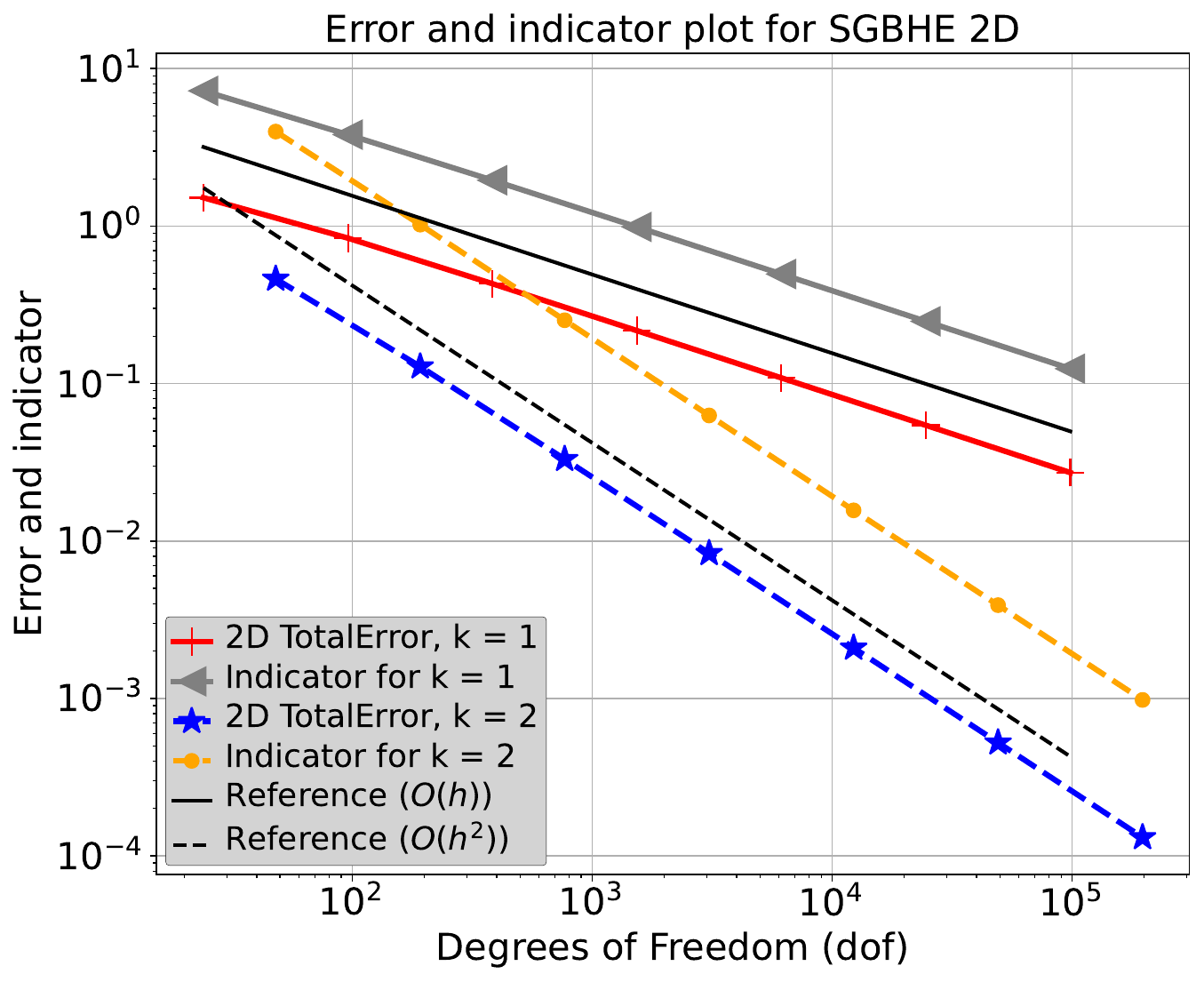}
		\includegraphics[width=0.490\textwidth]{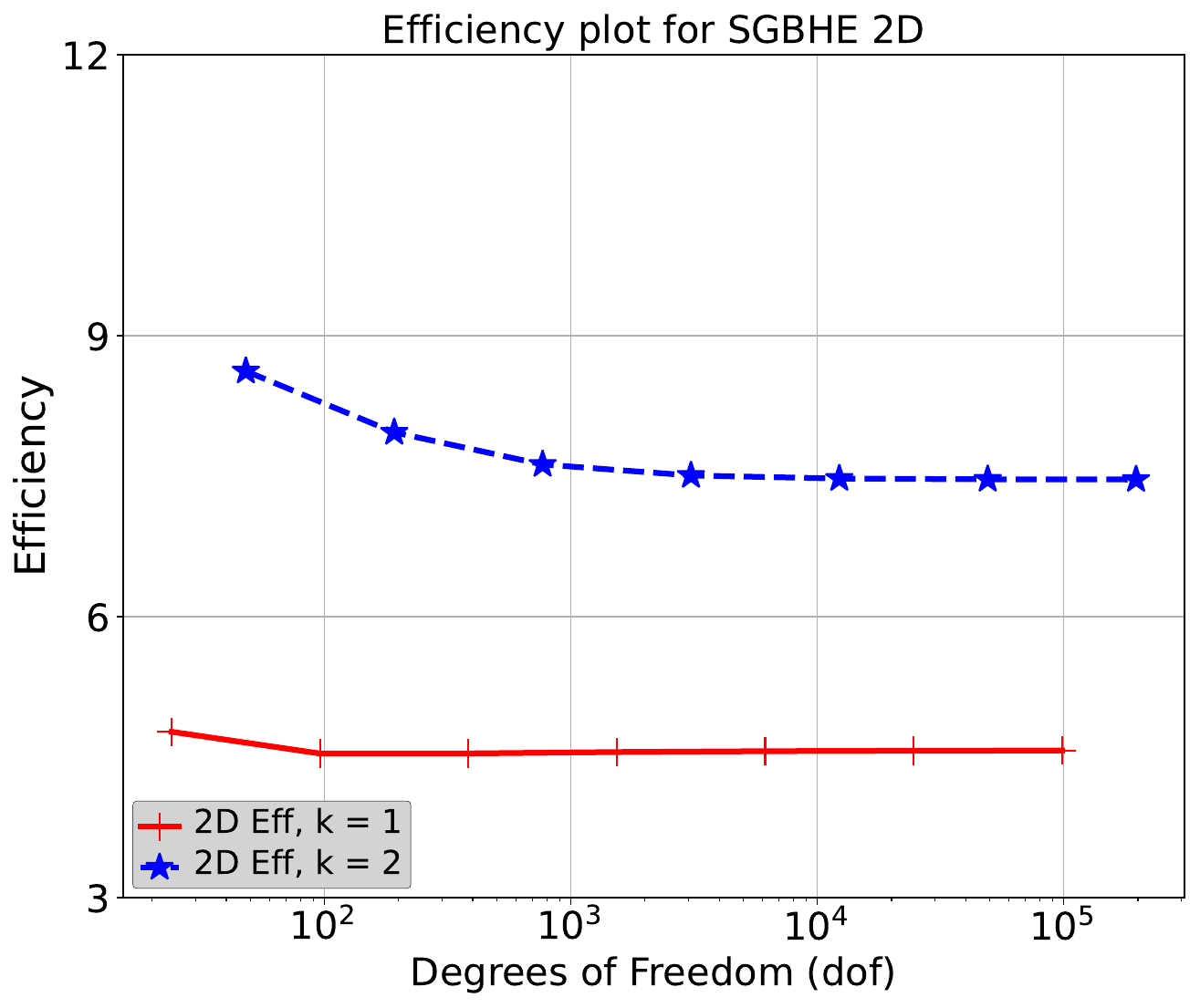}
		\includegraphics[width=0.490\textwidth]{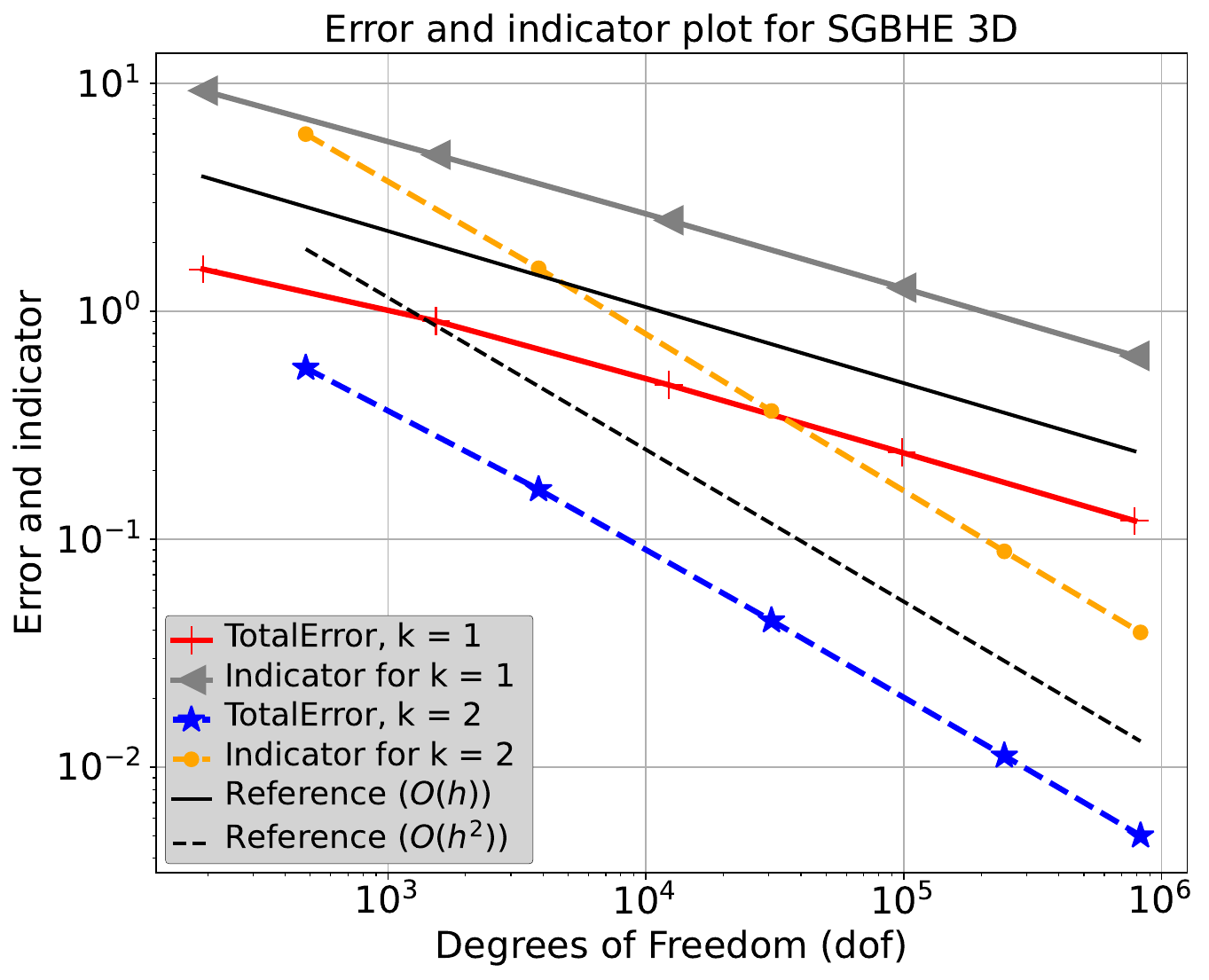}
		\includegraphics[width=0.490\textwidth]{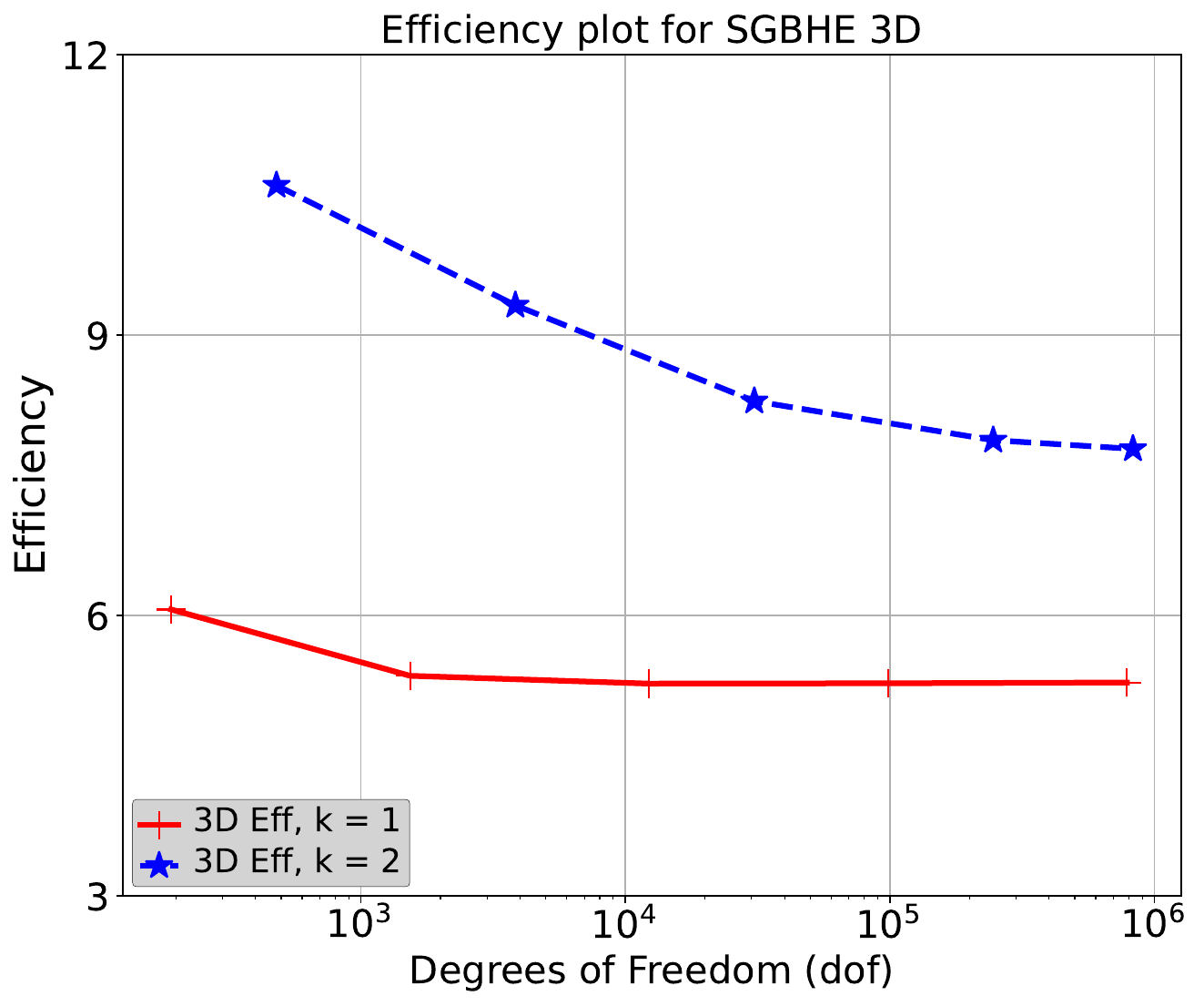}
	\end{center}\caption{ \footnotesize{Error in energy norm, error indicator and efficiency plot for the $2D$ and $3D$ SGBHE under uniform refinement for exact solution $u(\x) = \prod_{i=1}^d \sin(\pi x_i)$ with approximation degree $k = 1,2$ receptively.}}\label{fig:ex1}
\end{figure}

\begin{figure}[ht!]
	
	\begin{center}
		\includegraphics[width=0.490\textwidth]{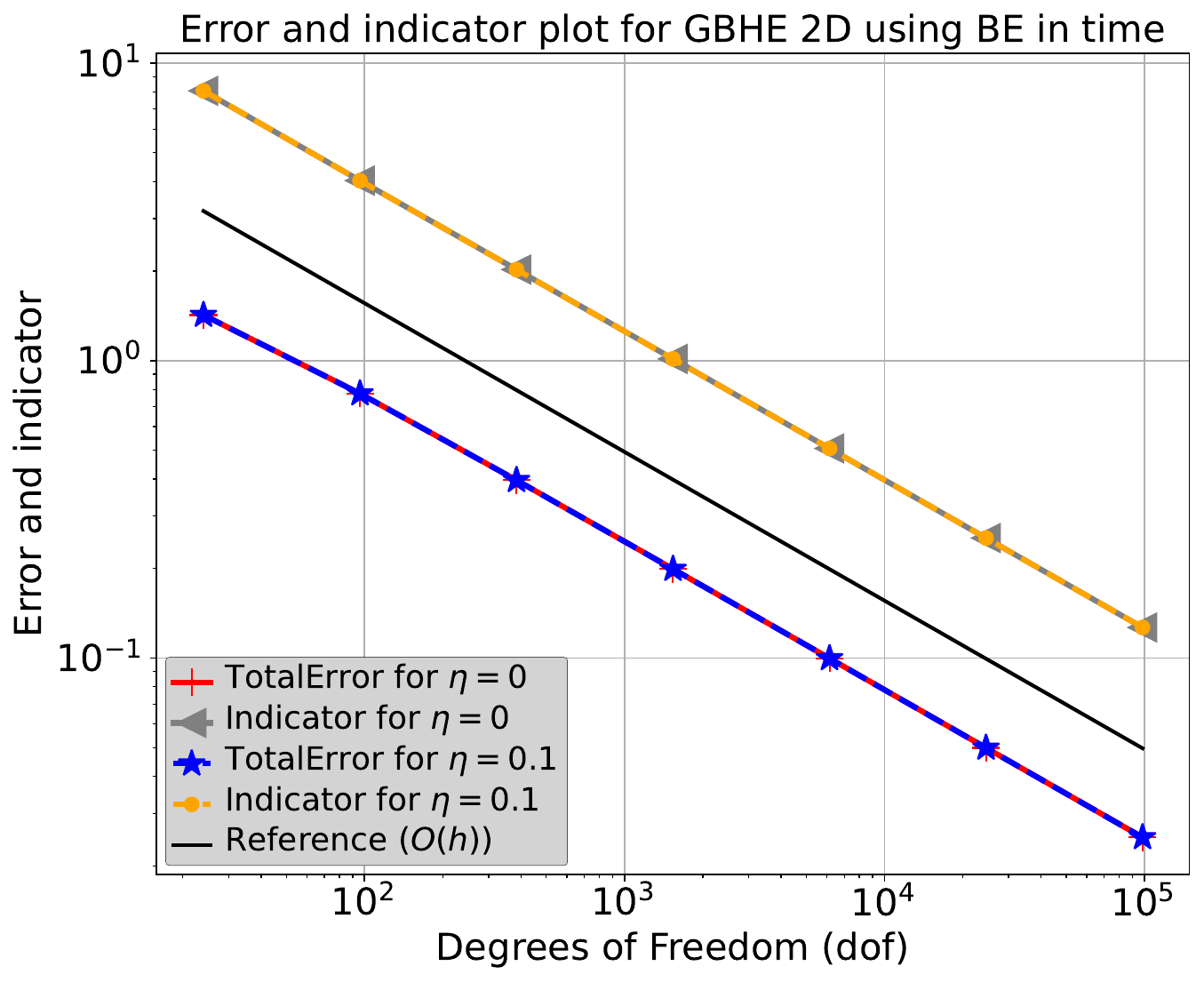}
		\includegraphics[width=0.490\textwidth]{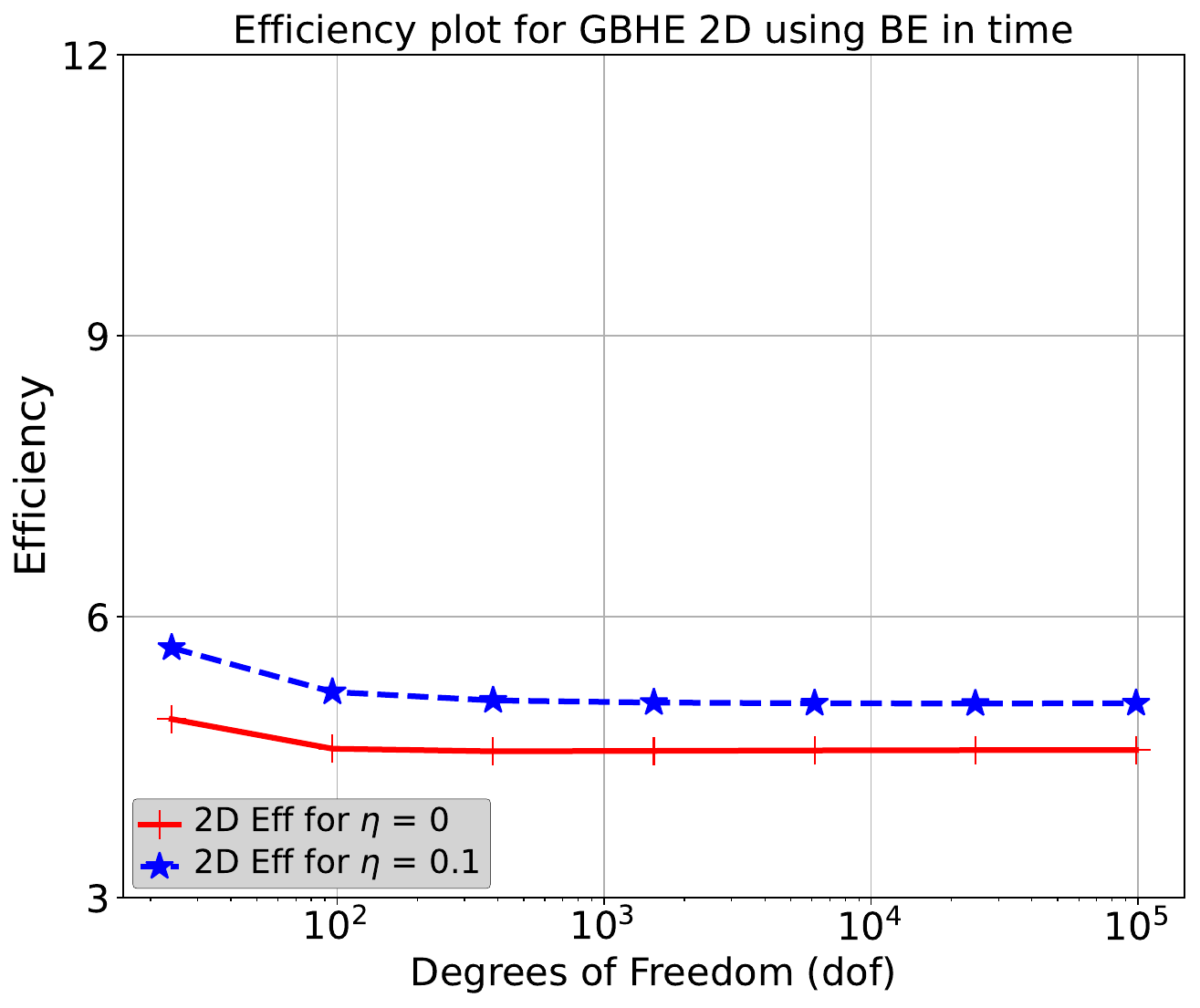}
		\includegraphics[width=0.490\textwidth]{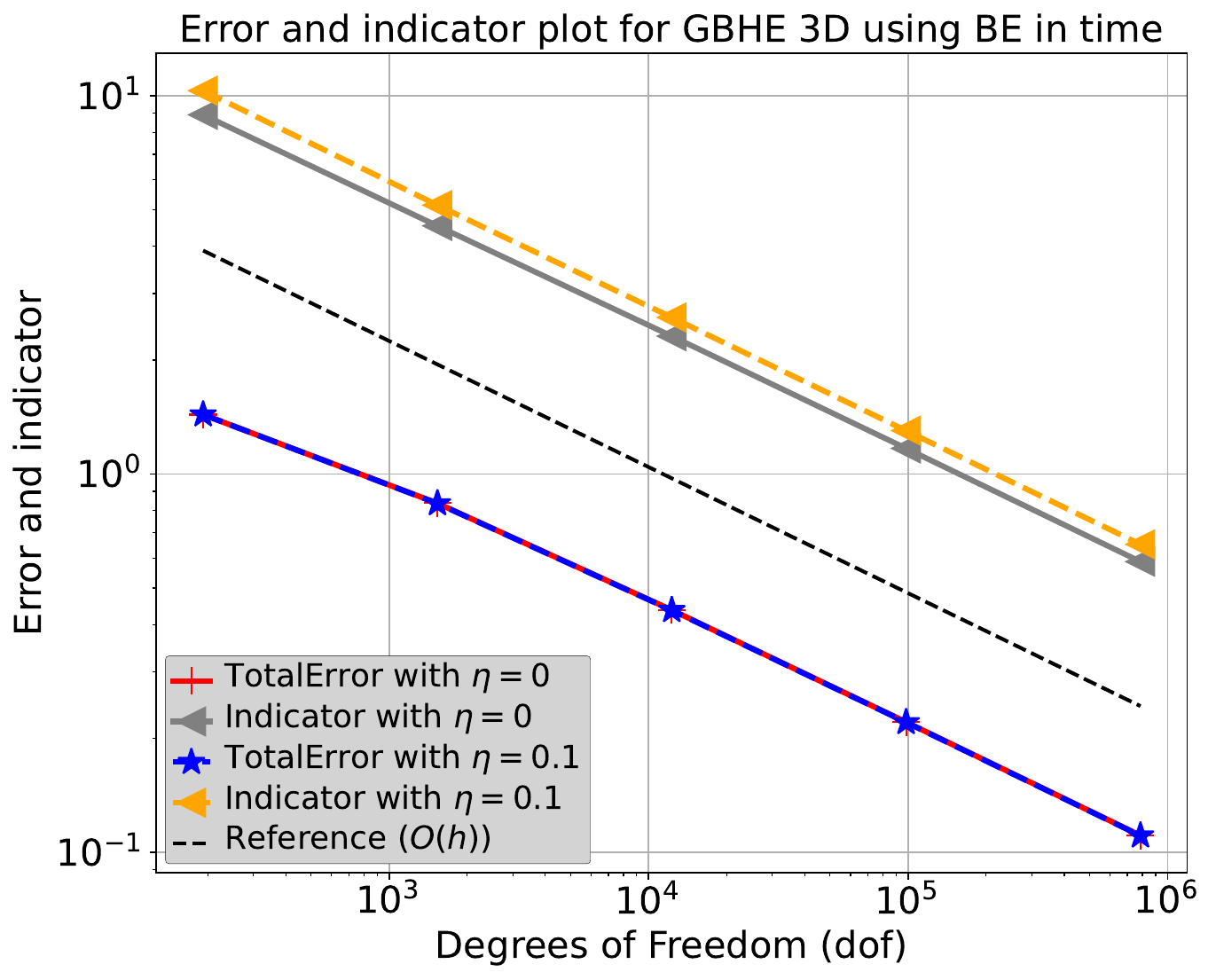}
		\includegraphics[width=0.490\textwidth]{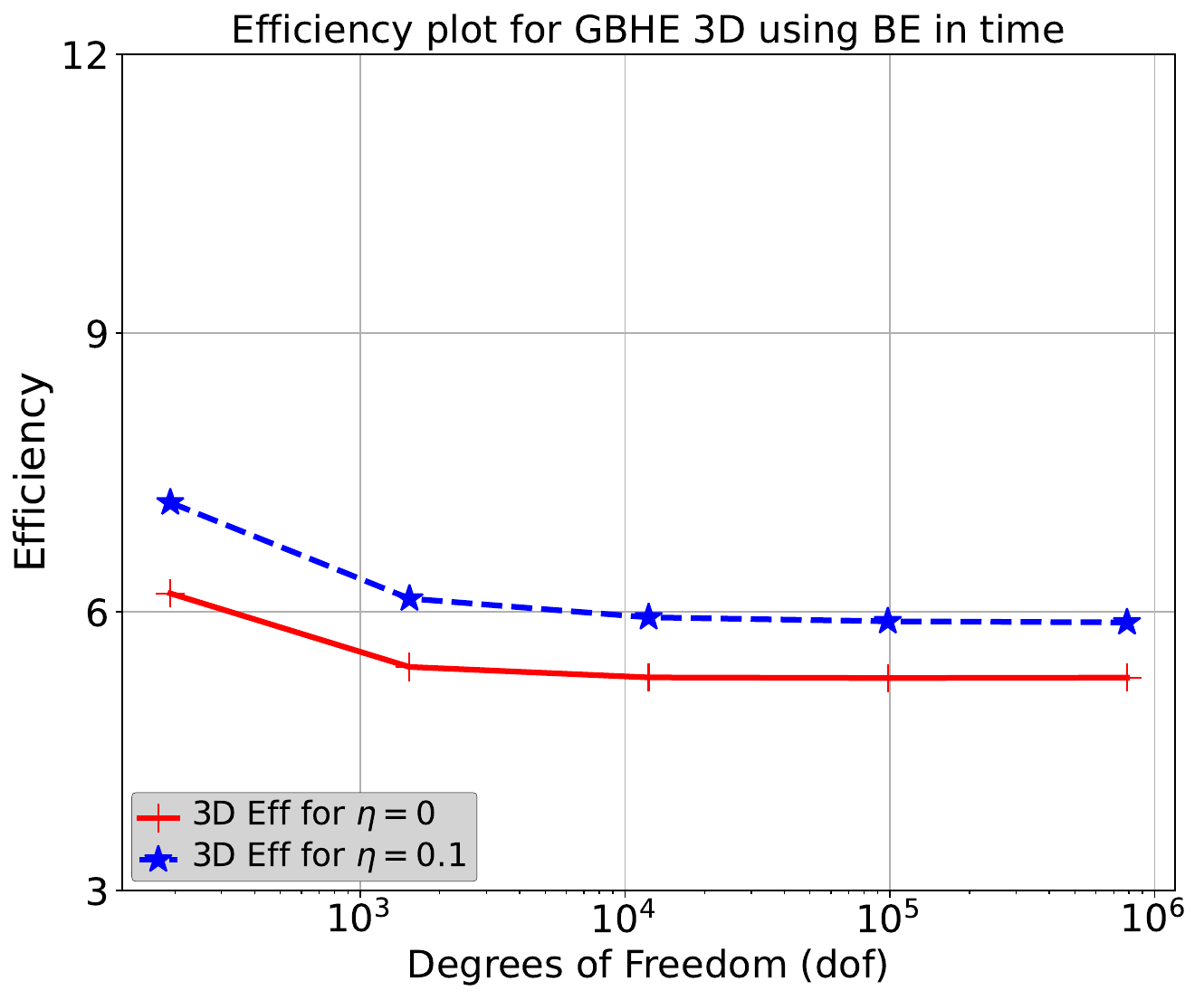}
	\end{center}
	\caption{\footnotesize{Error in energy norm, error indicator and efficiency plot for the $2D$ and $3D$ GBHE using backward Euler in time under uniform refinement for exact solution $u(\x,t) = \prod_{i=1}^d(t^3-t^2+1)\sin(\pi x_i)$ with ($\eta=0.1)$ and without $(\eta=0)$ memory.}}\label{fig:ex2}
\end{figure}
In Figure \ref{fig:ex1}, we depict the error estimates in the DG spatial norm \eqref{3.dgnorm1} for polynomial approximation degrees $k = 1, 2$ in $2D$ and $3D$, respectively. The achieved optimal convergence rate of $O(h^k)$ for $k = 1, 2$ affirm our theoretical assertions. Notably, the error indicator attains optimal convergence at $O(h)$ for $k= 1$ and $O(h^2)$ for $k = 2$, providing further support for the accuracy of our proposed estimator. The consistent efficiency observed across various mesh discretizations reinforces the reliability and effectiveness of our approach.

In the realm of the time-dependent scenario, we extend our analysis to the GBHE with a weakly singular kernel. Verification is conducted using both the backward Euler (BE) method \eqref{2.ncweakformfd} and the Crank-Nicolson (CN) scheme \eqref{2.ncweakformfdCN}. Employing the chosen weakly singular kernel $K(t) = \frac{1}{\sqrt{t}}$, Figures \ref{fig:ex2} showcase the error and the error indicator defined in \eqref{3.timeind}. These metrics exhibit optimal $O(h)$ convergence for both cases, with memory coefficients $\eta = 0.1$ and without memory $\eta = 0$ in $2D$ and $3D$, respectively. The efficiency plot consistently demonstrates the optimality of our estimator, maintaining a fixed ratio of the indicator to error. Lastly, Figure \ref{fig:ex3} illustrates the second-order convergence for the CN scheme.
\begin{figure}[ht!]
	\begin{center}
		\includegraphics[width=0.490\textwidth]{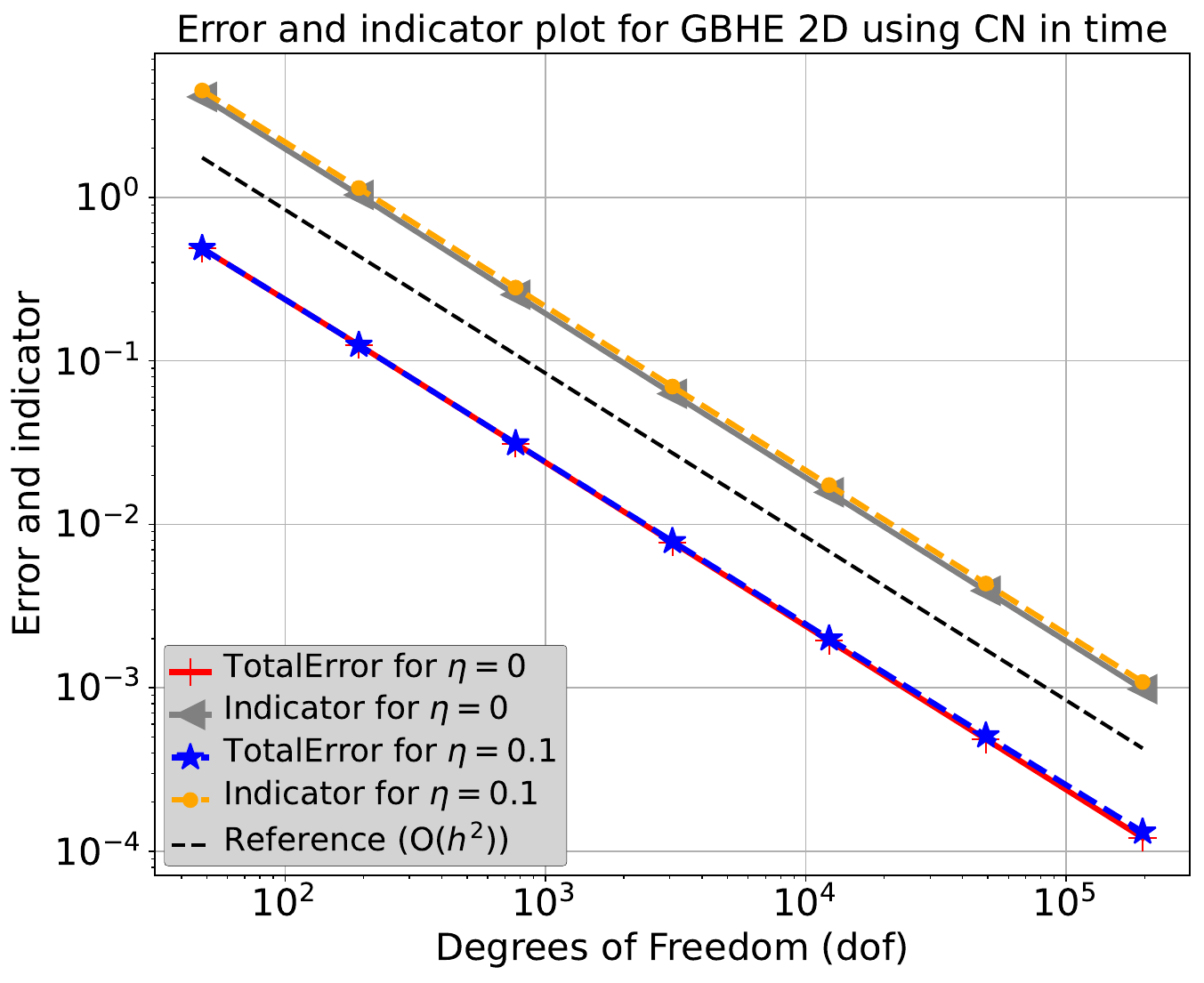}
		\includegraphics[width=0.490\textwidth]{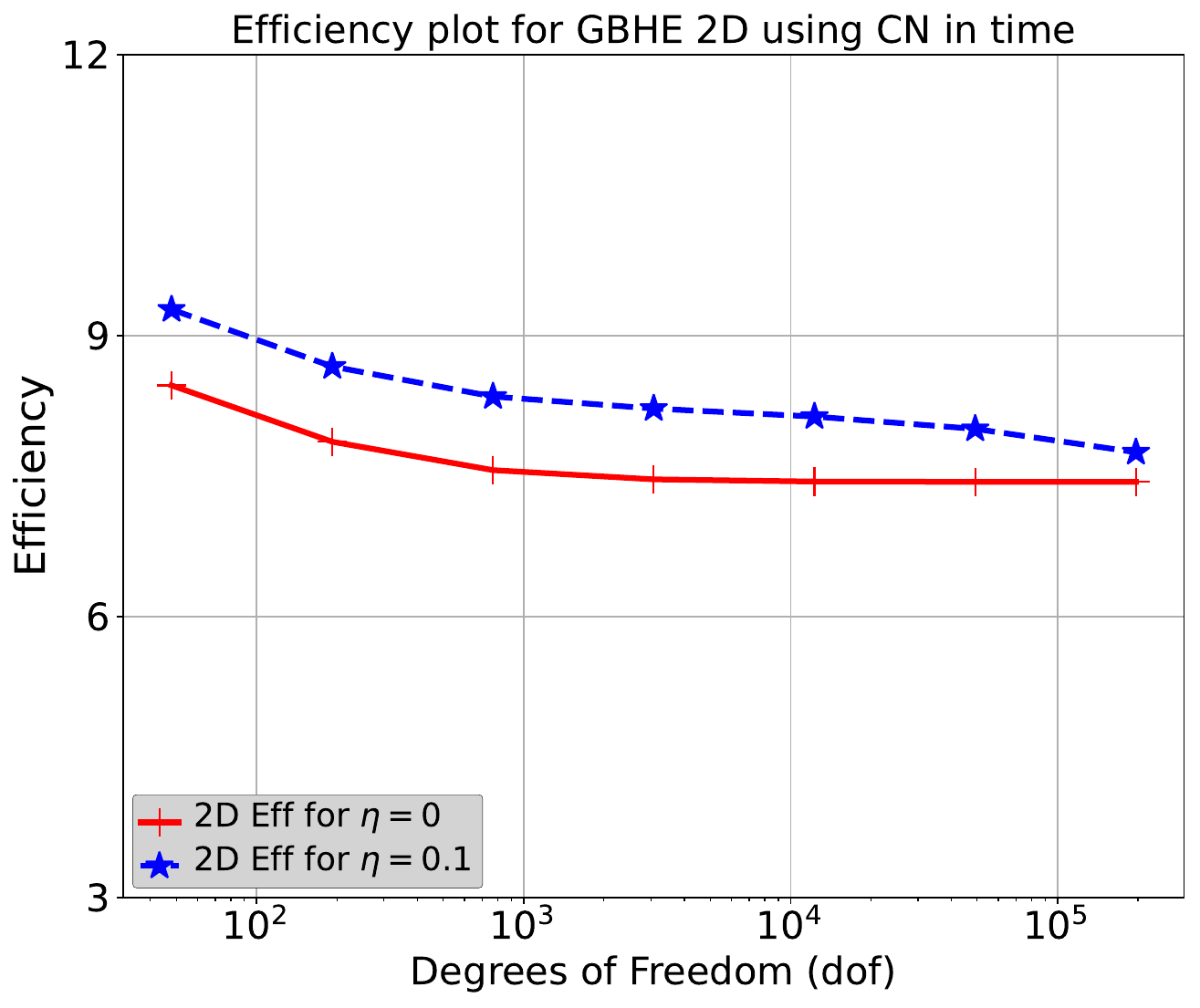}
		\includegraphics[width=0.490\textwidth]{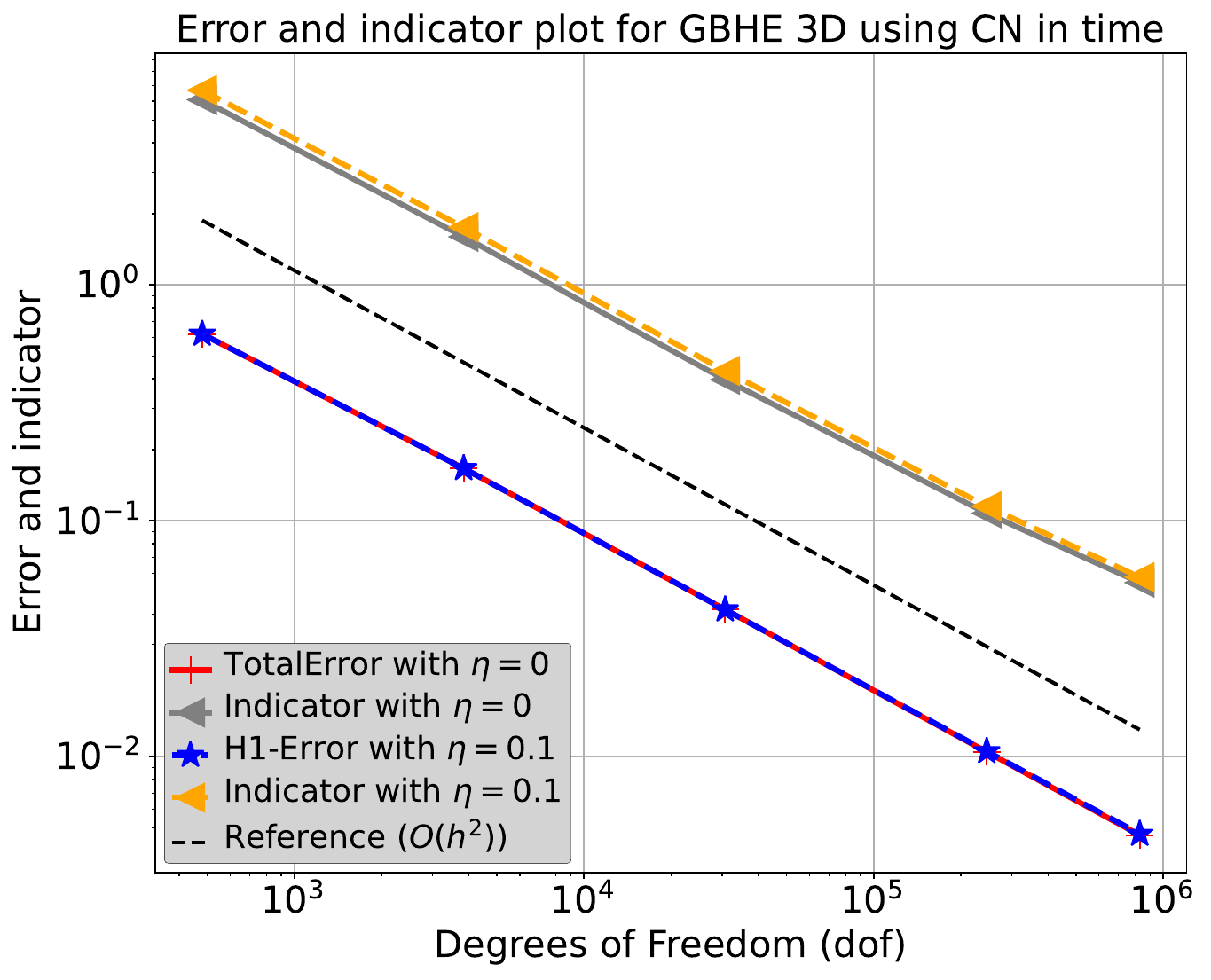}
		\includegraphics[width=0.490\textwidth]{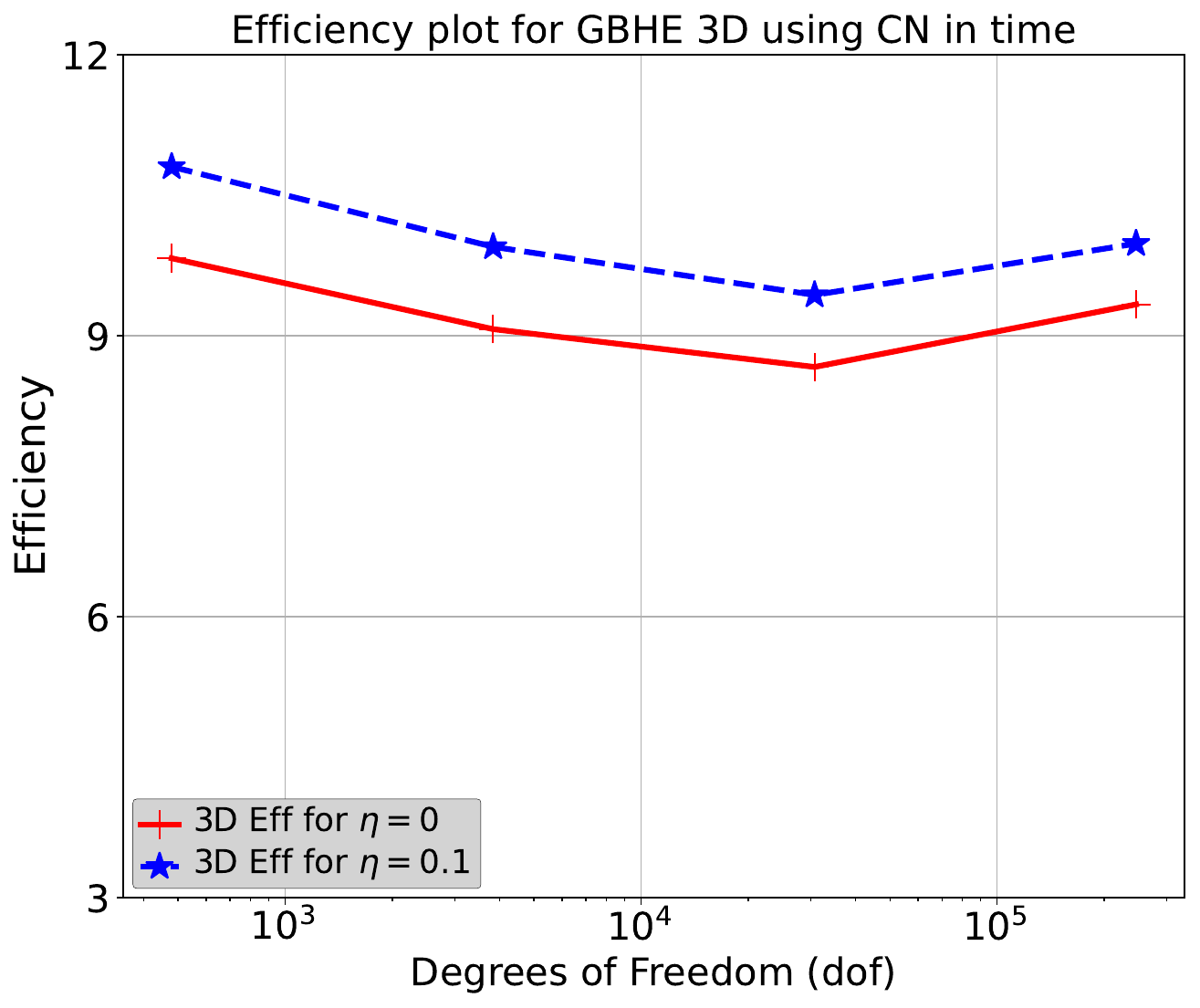}
	\end{center}
	\caption{\footnotesize{Error in energy norm, error indicator and efficiency plot for the $2D$ and $3D$ GBHE using CN in time under uniform refinement for exact solution $u(\x,t) = \prod_{i=1}^d(t^3-t^2+1)\sin(\pi x_i)$ with ($\eta=0.1)$ and without $(\eta=0)$ memory.}}\label{fig:ex3}
\end{figure}

\textbf{Example 2.} \textbf{L-shape domain.} Consider the SGBHE given by \eqref{3.SGBHEweak} defined on a non-convex L shape domain given by $\Omega = (-1,-1)^2\backslash(0,1)^2$. We consider the following two cases:
\begin{align*}
	\text{Case 1 : }u(x,y)&= xy(1-x)(1-y)\exp(-50((x-0.025)^2 + (y-0.025)^2)),\\	\text{Case 2 : } u(x,y)&= (x^2+y^2)^{\frac{1}{4}}.
\end{align*}
The forcing function $f$ and the Dirichlet boundary condition are prescribed according to the solution $u$ in both cases, with parameters chosen similarly to those in Example 1. The adaptive algorithm, comprising Solve, Estimate, Mark, and Refine steps, is employed. The error indicator is computed as defined in \eqref{3.errind}, and marking is conducted using the maximum criteria: a cell is refined if it satisfies
$$\zeta_K \geq \mu\max\limits_{L\in \mathcal{T}_h} \zeta_L,$$
where $0< \mu<1$. Mesh refinement is performed with equidistribution of the local error indicator in the updated mesh. The problem is then solved again on the refined mesh, re-estimated, and refined iteratively until the maximum residual value reaches the desired tolerance.
In the adaptive case, the convergence rate is computed as
$$ rate = -2 \log(|\!|\!|e_i|\!|\!|/|\!|\!|e_{i+1}|\!|\!|)/\log(DOF_i/DOF_{i+1}),$$
where $e_{i+1}$ and $\text{DOF}_{i+1}$ represent the error and degrees of freedom for the refined mesh. The constant $\mu$ is chosen such that the degrees of freedom for uniform and adaptive refinement are comparable.
\begin{figure}[ht!]
	\begin{center}
		\includegraphics[width=0.490\textwidth]{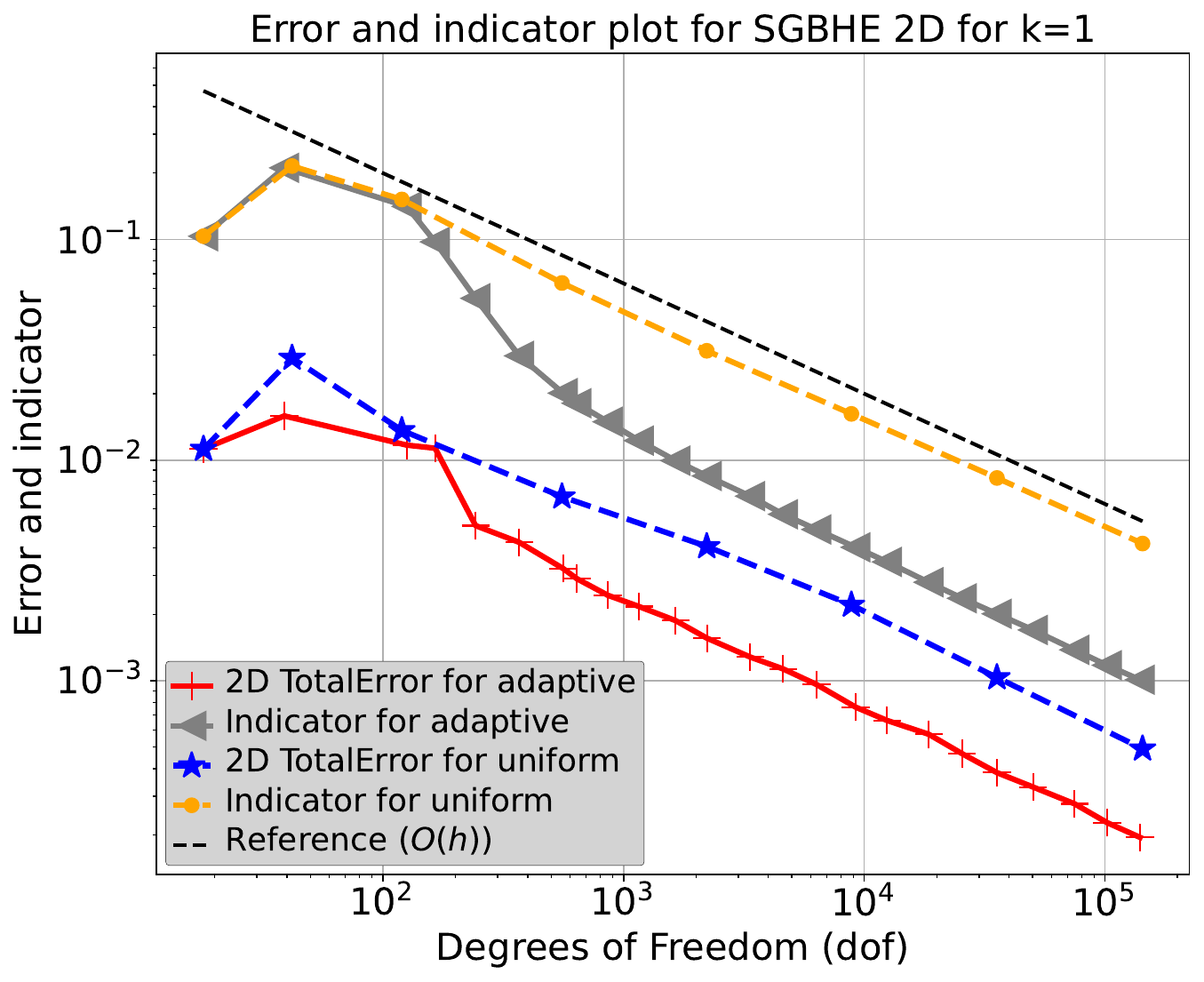}
		\includegraphics[width=0.490\textwidth]{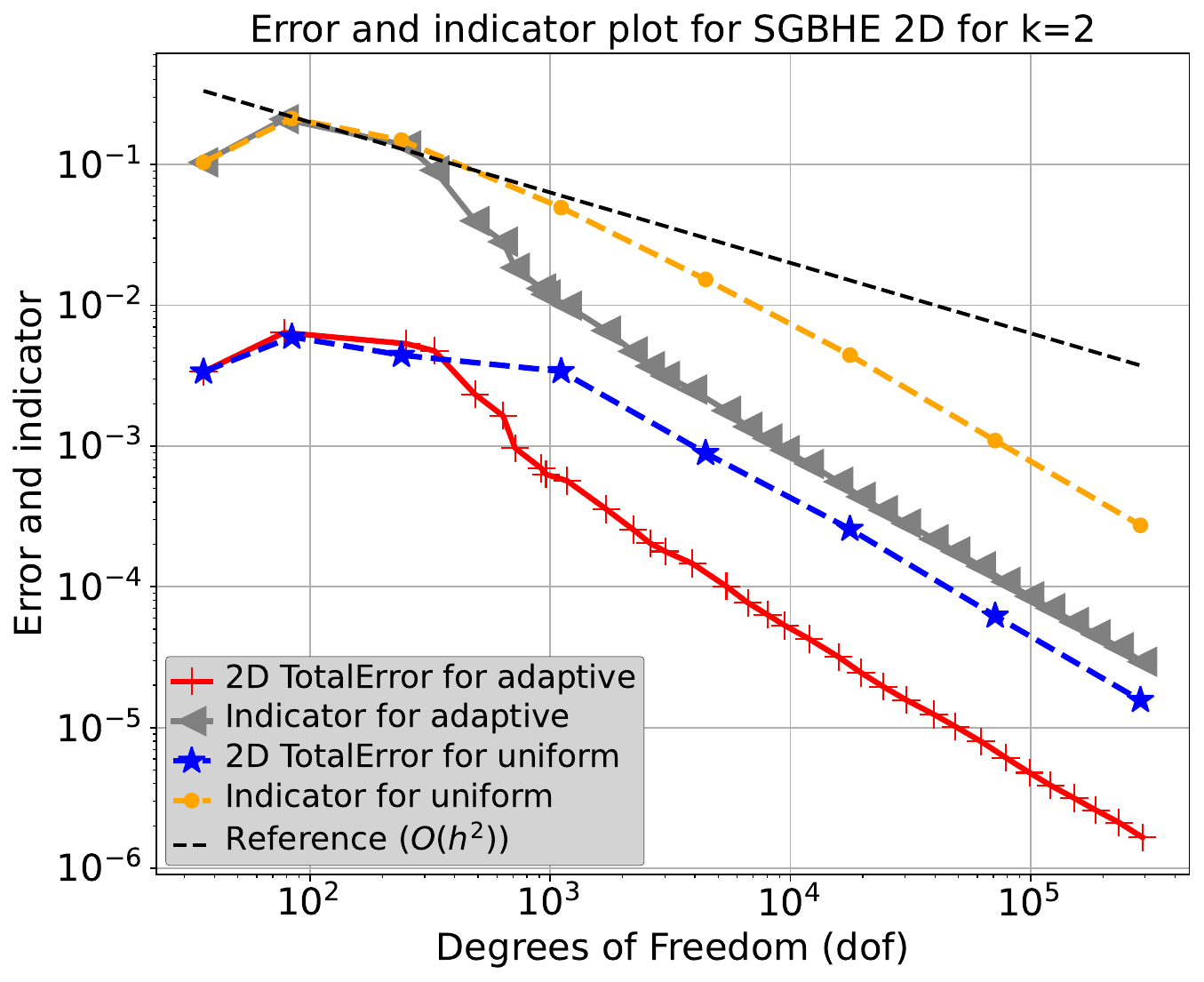}
	\end{center}
	\caption{\footnotesize{Error in energy norm, error indicator under adaptive and uniform refinement for the SGBHE with exact solution $u(x,y) = xy(1-x)(1-y)\exp(-50((x-0.025)^2 + (y-0.025)^2))$, and approximation degree, $k = 1,2$ respectively.}}	\label{fig:ex4}
\end{figure}
\begin{figure}[ht!]
	
	\begin{center}
		\includegraphics[width=0.325\textwidth]{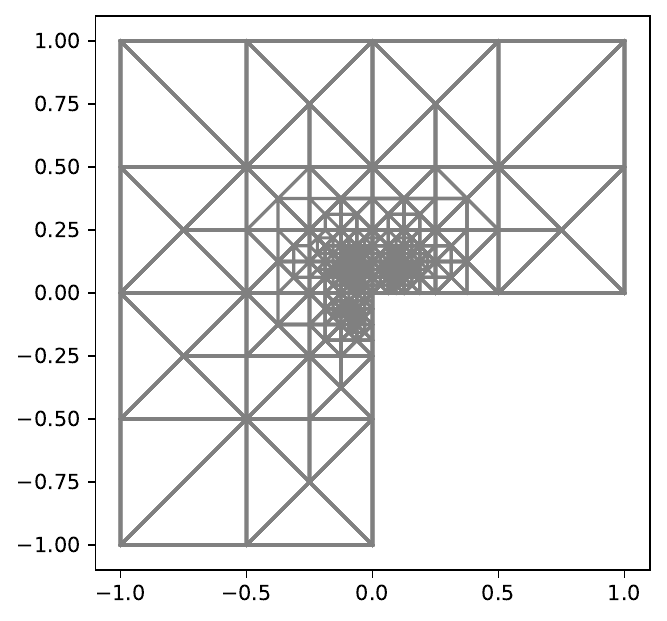}
		\includegraphics[width=0.325\textwidth]{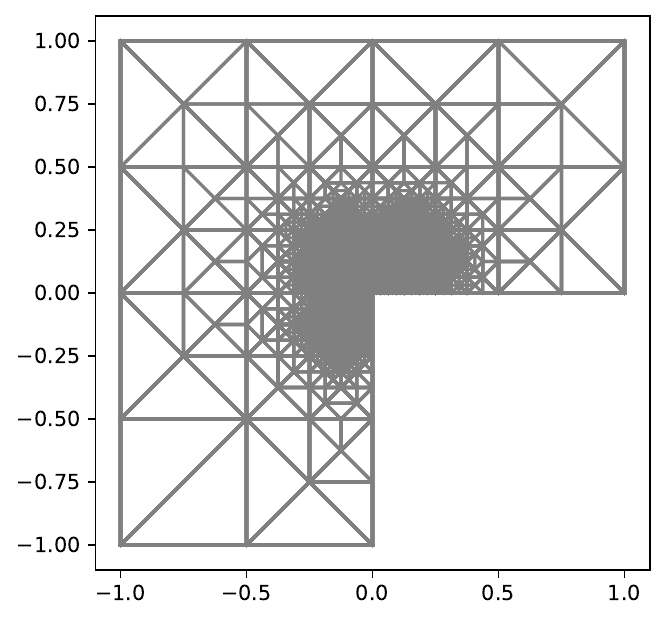}
		\includegraphics[width=0.325\textwidth]{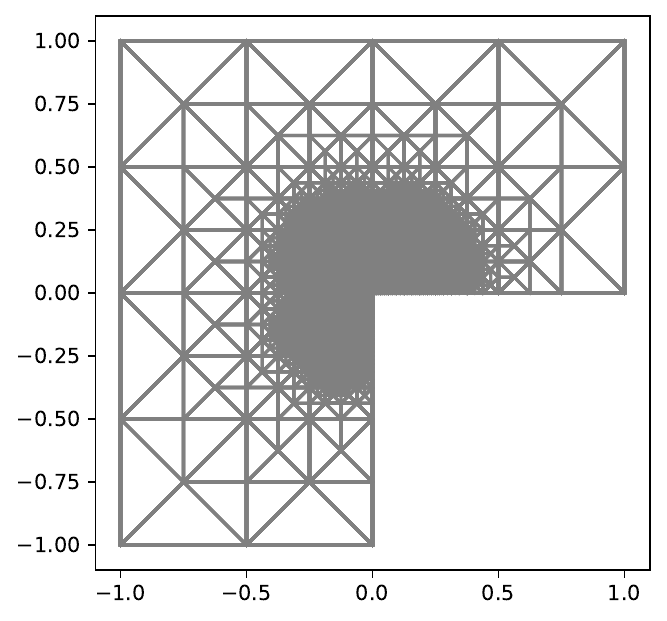}
	\end{center}
	\caption{\footnotesize{The refined meshes obtained after adaptive strategy with $\mathfrak{K}= 0.5$ and $1161, 12450,102084$ degrees of freedom respectively with approximation degree $k=1$ for Case 1.}}	\label{fig:ex6}
\end{figure}\\
In Case 1, the exact solution exhibits a high gradient around the point $(0,0)$. Despite this, the error plots in Figure \ref{fig:ex4} reveal that the error in the energy norm converges with $O(h)$ for both the uniform and adaptive refinement scenarios. Notably, due to the pronounced gradient and singularity at the point (0,0), more refinement is carried out in that specific region, as illustrated in Figure \ref{fig:ex6}.

Conversely, in Case 2, where there is a singularity at point $(0,0)$, the error in the energy norm converges suboptimally under uniform refinement, as depicted in Figure \ref{fig:ex5}. However, in the case of adaptive refinement, we achieve an optimal rate of convergence. The efficiency plot further assures the reliability of our estimator. The refined mesh obtained using the adaptive strategy at different degrees of freedom is presented in Figure \ref{fig:ex8}.
\begin{figure}[ht!]
	
	\begin{center}
		\includegraphics[width=0.490\textwidth]{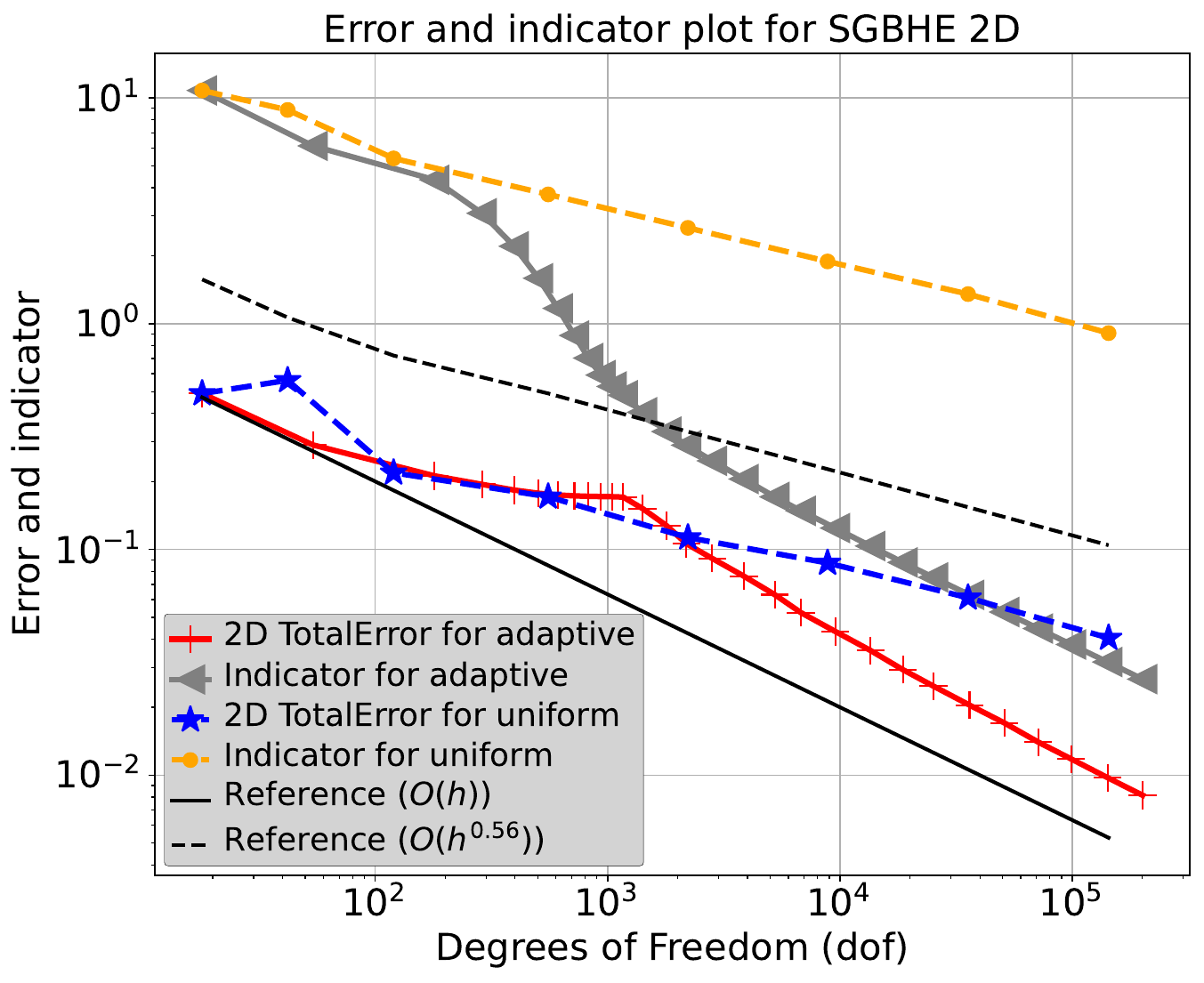}
		\includegraphics[width=0.490\textwidth]{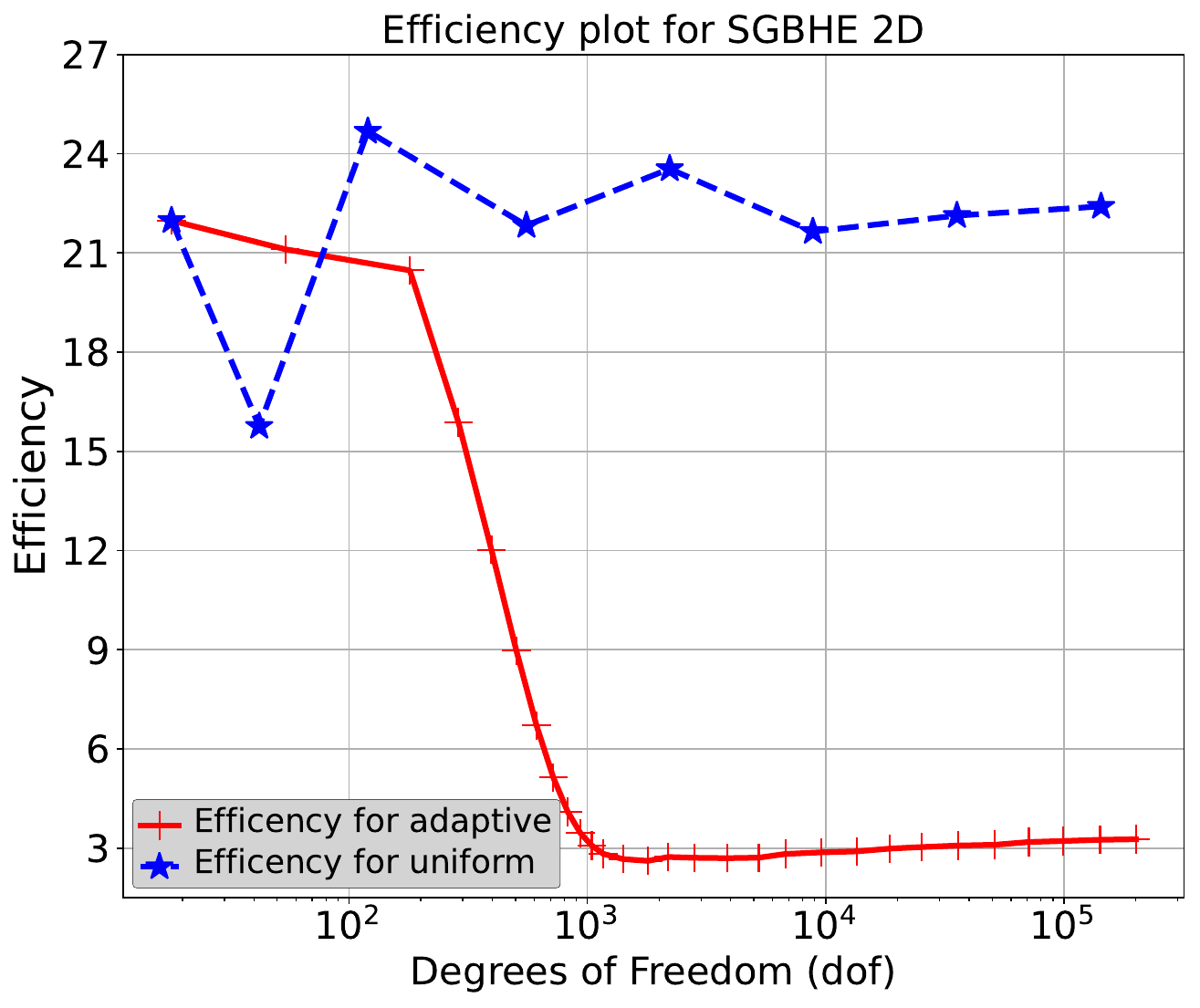}
	\end{center}
	\caption{\footnotesize{Error in energy norm, error indicator under adaptive and uniform refinement for the SGBHE with exact solution $u(x,y) = (x^2+y^2)^{\frac{1}{4}},$ and approximation degree, $k = 1$.}}\label{fig:ex5}
\end{figure}
\begin{figure}[ht!]
	
	\begin{center}
		\includegraphics[width=0.325\textwidth]{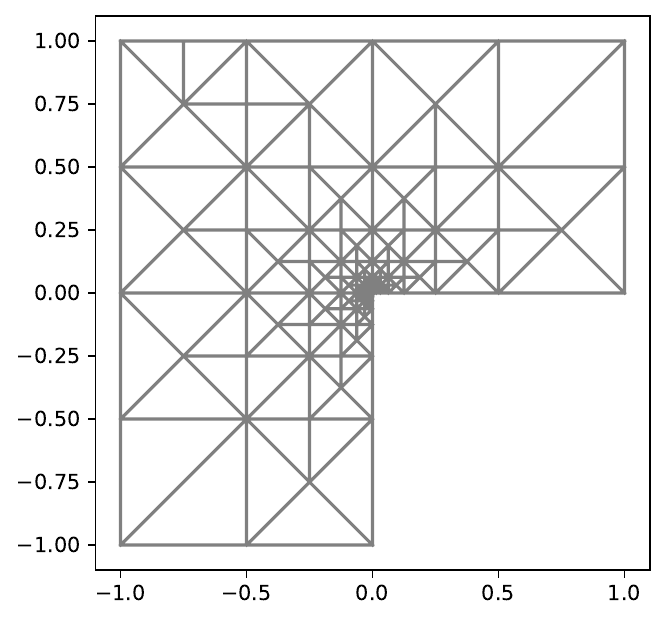}
		\includegraphics[width=0.325\textwidth]{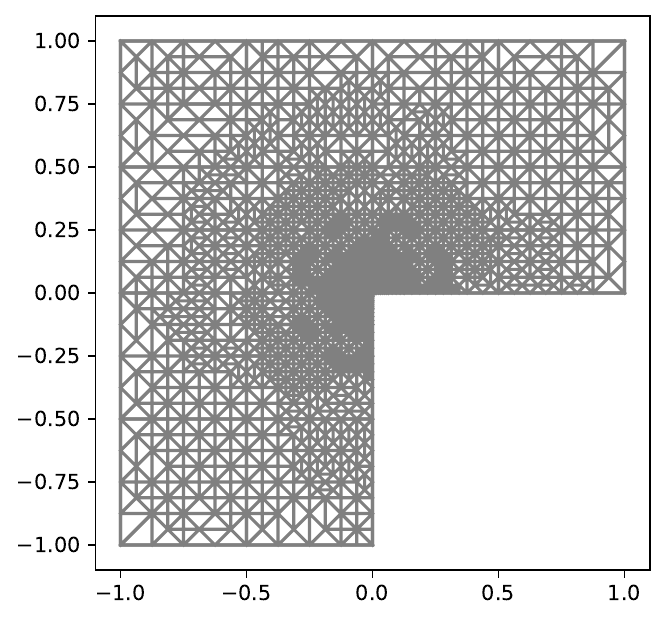}
		\includegraphics[width=0.325\textwidth]{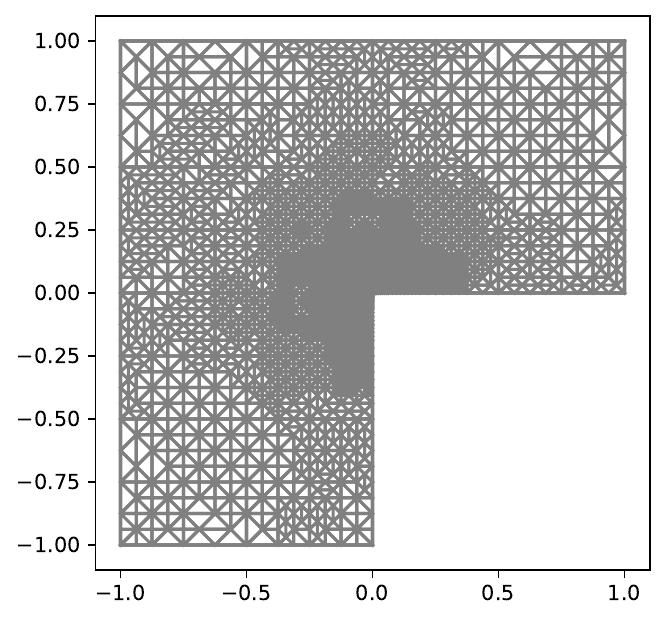}
	\end{center}
	\caption{\footnotesize{The refined meshes obtained after adaptive strategy with $1044, 9561$ and $13527$ degrees of freedom, respectively, with approximation degree $k=1$ for Case 2.}}	\label{fig:ex8}
\end{figure}\\
\textbf{Example 3:} \textbf{Time variable singularity.} In order to test our adaptive algorithm for the time adaptivity, we consider the GBHE ($\eta =0)$ defined in unit square with the exact solution \cite{NSo} given by 
$u = s(t)\times \exp(-50\times r^2(x,y,t))$ with $ r^2(x,y,t) = (x-0.4\times t-0.3)^2 + (y-0.4\times t-0.3)^2$ and 
$$
\begin{array}{cc}
	\bigg\{ & 
	\begin{array}{cc}
		s(t)= 1-\exp(-50\times(0.98\times t+0.01)^2) & \text{if }t <0.5, \\
		s(t) = 1-\exp(-50\times(1-0.98\times t+0.01)^2) & \text{else}. \\
	\end{array}
\end{array}
$$

Notice that the singularity undergoes a shift from $(0.3,0.3)$ at $t = 0s$ to $(0.7,0.7)$ at $t = 1s$ as time advances. For the domain $\Omega_t= [0,1]^2\times(0,T)$ with $T=1$ and a time discretization of $\tau = 0.1$, we employ an adaptive algorithm utilizing backward Euler (BE) in time and discontinuous Galerkin (DG) in space, allowing a maximum of $7$ refinements in space at each time step. Initiated at each time step with an initial unit square mesh featuring $4\times 4$ refinements, the refined mesh at each time level $t = i\times dt$ for $i= 1,2,\cdots,10$ is displayed in Figure \ref{fig:ex7}. The observed shift in the time singularity at each time step serves as validation for the effectiveness of our adaptive strategy.

\begin{figure}[ht!]
	
	\begin{center}
		\includegraphics[width=0.325\textwidth]{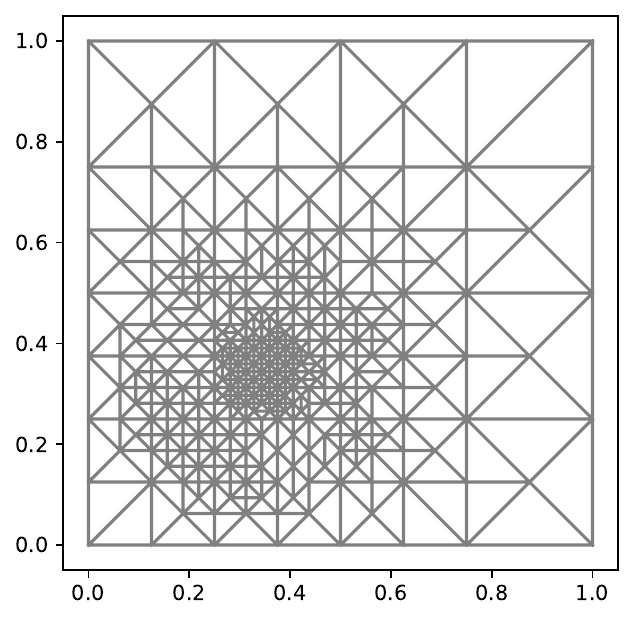}
		\includegraphics[width=0.325\textwidth]{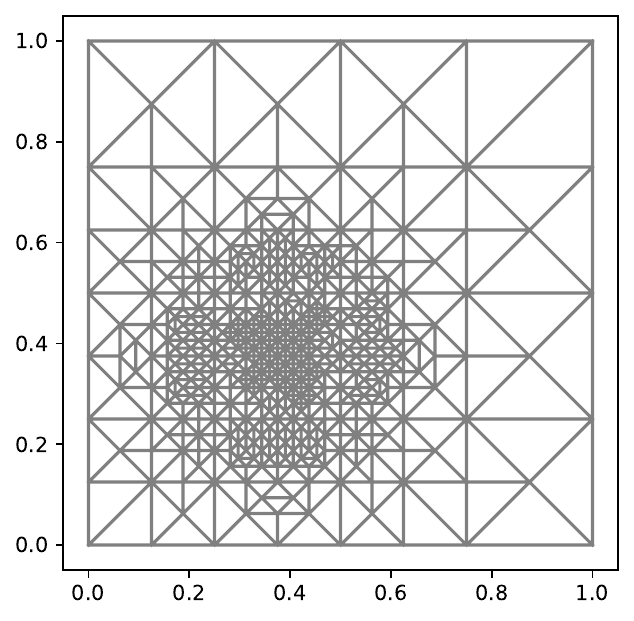}
		\includegraphics[width=0.325\textwidth]{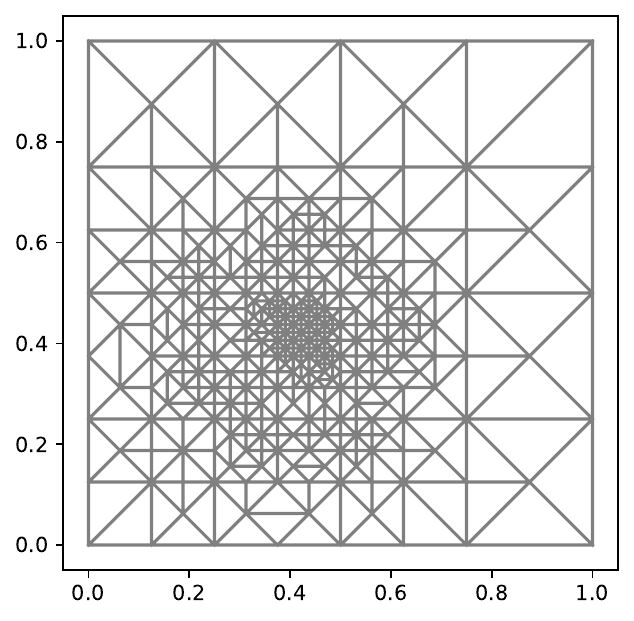}
		\includegraphics[width=0.325\textwidth]{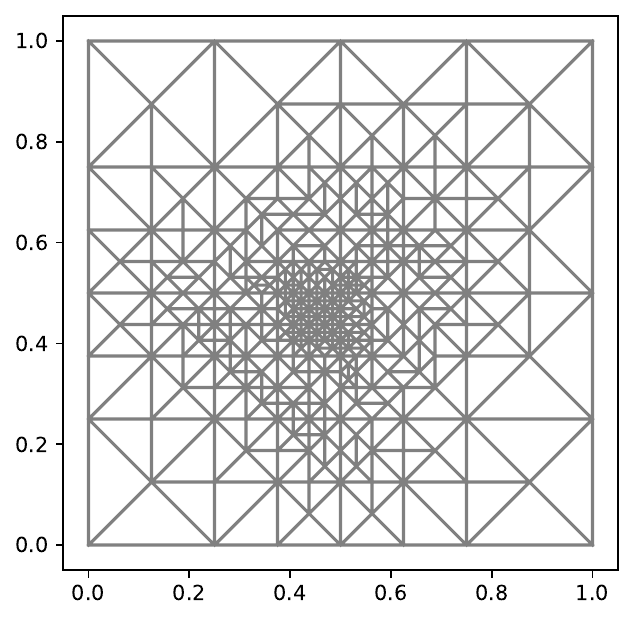}
		\includegraphics[width=0.325\textwidth]{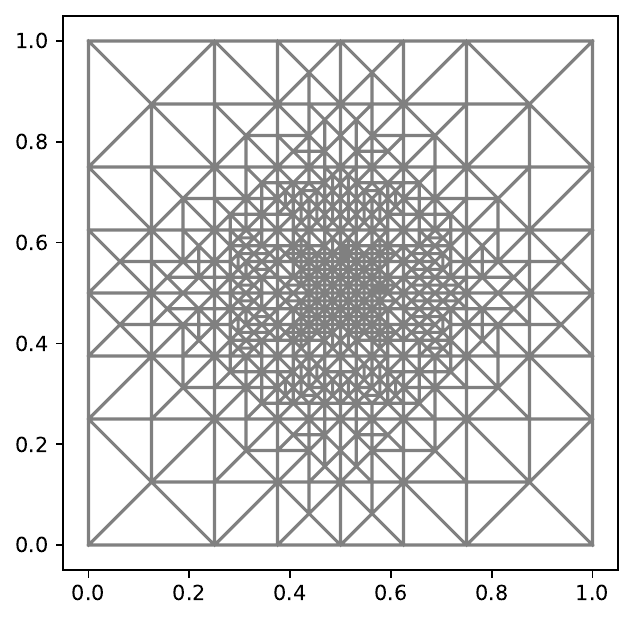}
		\includegraphics[width=0.325\textwidth]{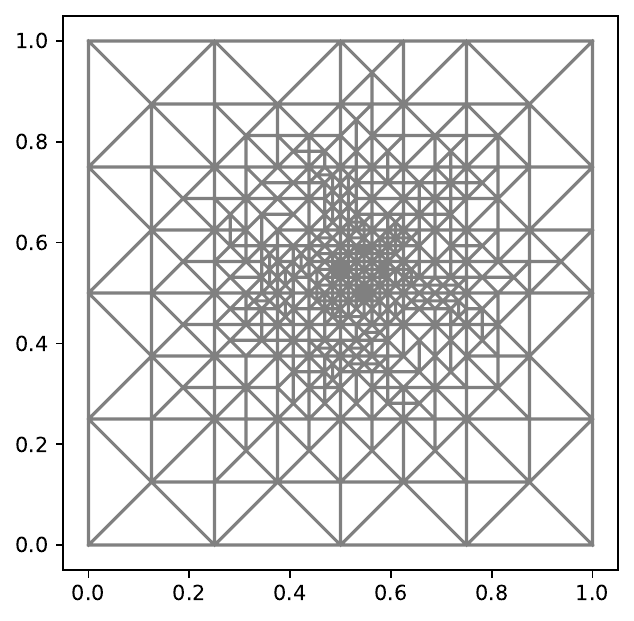}
		\includegraphics[width=0.325\textwidth]{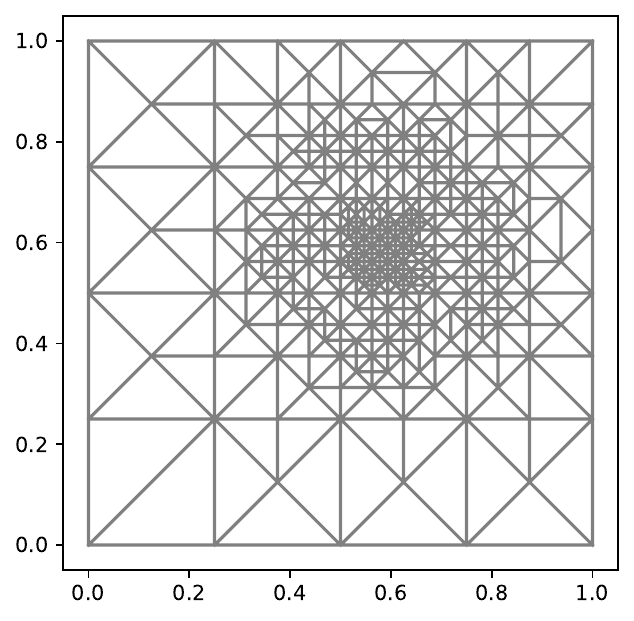}
		\includegraphics[width=0.325\textwidth]{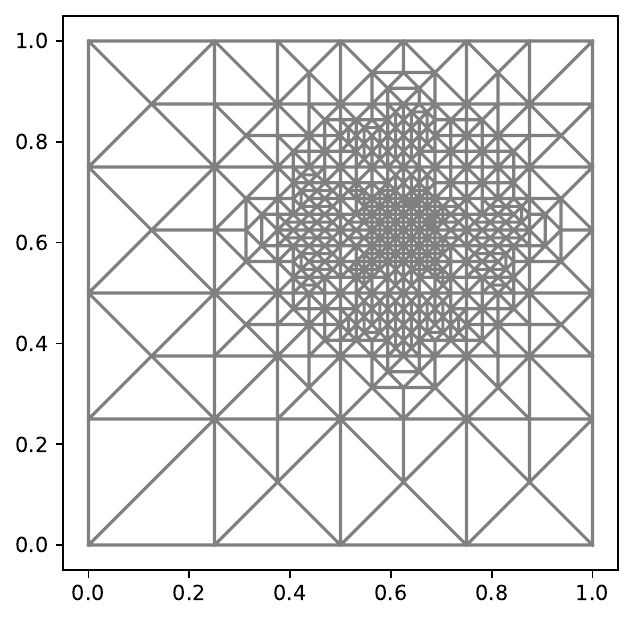}
		\includegraphics[width=0.325\textwidth]{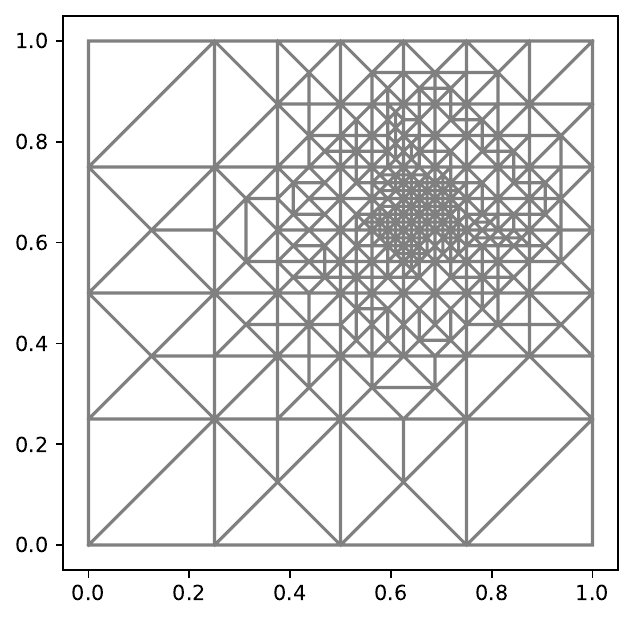}
		\includegraphics[width=0.325\textwidth]{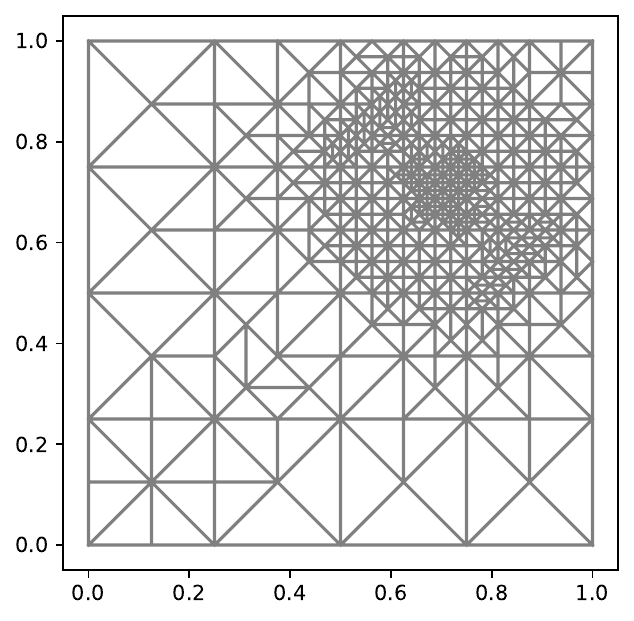}
	\end{center}
	\caption{\footnotesize{The adaptive mesh refinements after 7 iterations of the refined meshes obtained at each time step $t= n\times dt$ with $dt = 0.1$ and number of time steps, $n=10$.}}	\label{fig:ex7}
\end{figure}
\section{Conclusion Remark}
\bl{The current study takes into consideration the integro-differential equation, specifically focusing on the generalized Burgers-Huxley equation (GBHE) with weakly singular kernels. We conducted a comprehensive investigation into optimal $\L^2$ error estimates and a posteriori error estimates, establishing them for both steady and unsteady scenarios involving the GBHE. The study confirms the effectiveness of the proposed estimators in various settings, utilizing both backward Euler and Crank-Nicolson schemes. These findings enhance our understanding of convection-diffusion-reaction and partial integro-differential equation (PIDE) models, while also providing a solid foundation for future research. Furthermore, we identified promising avenues for further exploration, with $hp$-adaptivity for PIDEs using discontinuous Galerkin FEM being the primary focus of our ongoing research. Additionally, the a priori error estimates using $hp$-DGFEM have already been derived, as discussed in \cite{GBHE4}.}
\\
\textbf{Acknowledgement:} SM would like to express gratitude to the Ministry of Education, Government of India, for financial support through the Prime Minister Research Fellowship (PMRF ID: 2801816), enabling the conduct of his research work. AK was partially supported by SERB Core research grant CRG/2021/002569 and the faculty initiation grant MTD/FIG/100878. The authors also extend their thanks to Dr. Manil T. Mohan, Associate Professor, Department of Mathematics, IIT Roorkee  for their insightful discussions, which significantly enriched our work.\\
\textbf{Data availability}
In case of reasonable request, datasets generated during the research discussed in the paper will be available from the corresponding author.\\
\textbf{Declarations}
The authors declare that they have no conflict of interest.

\bibliographystyle{abbrv} 
\bibliography{3ref}
\clearpage

				\end{document}